\documentclass[12pt,a4paper,reqno]{amsart}
\setcounter{secnumdepth}{3}

\usepackage[
left=1in,right=1in,top=1in,bottom=1in,
]{geometry}

\usepackage{hyperref}
\hypersetup{
    colorlinks=true,
    linkcolor=blue,
    citecolor=blue,
    urlcolor=blue,
    pdfauthor={},
    pdftitle={Collapsing singular Riemannian foliations with flat leaves}
}

\usepackage{mathscinet}
\usepackage[numbers]{natbib}
\usepackage{url}
\usepackage[dvipsnames]{xcolor}
\usepackage{graphicx}
\usepackage{amsfonts, amssymb, amsmath, amsthm}
\usepackage{mathtools}
\usepackage{multicol}
\usepackage{tikz}
\usepackage{tikz-cd}
\usepackage{tikz-3dplot}
\usepackage{cancel}
\usepackage{changepage}
\usepackage{csquotes}
\usepackage{adjustbox}
\usetikzlibrary{matrix}
\usepackage{enumerate}
\usepackage[normalem]{ulem}
\usepackage{verbatim}
\usepackage{mathrsfs}


\newtheorem{mtheorem}{Theorem}  
 
\newtheorem{mcorollary}[mtheorem]{Corollary}

\newtheorem{theorem}{Theorem}[section]
\newtheorem{lemma}[theorem]{Lemma}
\newtheorem{prop}[theorem]{Proposition}

\newtheorem{question}{Question}

\theoremstyle{definition}

\newtheorem{ex}[theorem]{Example}
\newtheorem{rmk}[theorem]{Remark}

\numberwithin{equation}{section}

\usepackage[reftex]{theoremref}

\newcommand{\Alex}{\mathrm{Alex}}


\newcommand{\fol}{\mathcal{F}}
\newcommand{\F}{\mathsf{F}}

\newcommand{\Gr}{\mathrm{Gr}}
\newcommand{\Hol}{\mathrm{Hol}}


\newcommand{\N}{\mathbb{N}} 

\newcommand{\Ns}{\mathsf{N}}
\newcommand{\Or}{\mathrm{O}}

\newcommand{\prin}{\mathrm{prin}}
\newcommand{\R}{\mathbb{R}} 

\newcommand{\reg}{\mathrm{reg}}

\newcommand{\Sp}{\mathbf{S}} 

\newcommand{\Sec}{\mathrm{Sec}}

\newcommand{\X}{\mathfrak{X}}

\DeclareMathOperator{\curv}{curv}

\DeclareMathOperator{\diam}{Diam}
\DeclareMathOperator{\Id}{Id}
\DeclareMathOperator{\Iso}{Isom}

\DeclareMathOperator{\Inj}{Injrad}

\DeclareMathOperator{\dime}{dim}

\DeclareMathOperator{\expo}{exp}



\usepackage{lineno}

\usepackage[colorinlistoftodos, textwidth=3cm]{todonotes}
\newcounter{dccomment}

\begin{document}
\title[Collapsing flat foliations]{Collapsing regular Riemannian foliations with flat leaves}

\author[D. CORRO]{Diego Corro$^{*}$}
\address[D. CORRO]{
Fakultät für Mathematik\\
Karslruher Institut für Technologie\\
Englerstr. 2\\
76131 Karlsruhe\\
Deuschland.
}
\curraddr{School of Mathematics, Cardiff University, Cardiff, UK.}
\email{diego.corro.math@gmail.com}
\thanks{$^{*}$This work was supported by a UKRI Future Leaders Fellowship [grant number MR/W01176X/1; PI: J Harvey], by the DFG (281869850, RTG 2229 ``Asymptotic Invariants and Limits of Groups and Spaces''), a DGAPA postdoctoral Scholarship of the Institute of Mathematics of UNAM, and DFG-Eigene\-stelle Fellowship CO 2359/1-1 as part of the DFG-SPP 2026 ``Geometry at Infinity''.}

\subjclass[2020]{53C12,53C23,53C20,57R30,53C24}
\keywords{Collapse, F-structures, A-foliations, B-foliations, singular Riemannian foliations}

\setlength{\overfullrule}{5pt}

\begin{abstract}
In this manuscript we present how to collapse a manifold equipped with a closed flat regular Riemannian foliation with leaves of positive dimension, while keeping the sectional curvature uniformly bounded from above and below. From this deformation, we show that  in the case when the manifold is compact and simply connected the foliation is given by torus actions. This gives a geometric characterization of aspherical regular Riemannian foliations given by torus actions.
\end{abstract}

\maketitle


%

\section{Introduction}\label{S: Introduction}

Effective actions by compact Lie groups are a classical example of symmetry in differential topology and differential geometry. In particular a lot of consideration has been given to torus actions since they correspond to the abelian part of a Lie group. Recently singular Riemannian foliations have been considered as a generalized form of symmetry. These are smooth foliations with  a Riemannian metric that is ``bundle like''. The justification for considering singular Riemannian foliations as a notion of symmetry comes from the fact that Lie group actions by isometries and Riemannian submersions are examples of such foliations.

In the setting of singular Riemannian foliations, Galaz-García and Radeschi in \cite{GalazGarciaRadeschi2015} introduced an analogous concept to torus actions via isometries by considering singular Riemannian foliations whose leaves are closed and aspherical manifolds. These foliations are called \emph{$A$-foliations} and examples of such foliations include effective torus actions by isometries. In general due to \cite{GalazGarciaRadeschi2015} and \cite{Corro2023}, on a simply-connected manifold the leaves of an $A$-foliation are finitely covered by tori, and  moreover, the leaves of highest dimension are all homeomorphic to a torus.  Thus in this way, $A$-foliations resemble torus actions.  But in \cite{FarrellWu2019} examples are given of $A$-foliations, on closed manifolds with a non-trivial finite fundamental group, which are not given by torus actions. From this discussion,  it follows that it is natural to ask if we may characterize those $A$-foliations given by a torus action. 

In \cite{GalazGarciaRadeschi2015} the authors also introduced the notion of a \emph{$B$-foliation}. These are $A$-foliations such that the leaves are homeomorphic to manifolds that admit a flat metric, i.e. they are homeomorphic to \emph{Bieberbach manifolds}. In \cite[Theorem C]{Corro2023} it was proven that for  a simply-connected manifold the leaves of any $A$-foliation are homeomorphic to Bieberbach manifolds, except possibly for the singular leaves of dimension $4$. Due to this observation,  it was proposed in \cite{Corro2023} to redefine \emph{$B$-foliations} as those $A$-foliations $(M,\fol,g)$ such that for any leaf $L\in \fol$ we have that  the leaf $(L,g|_{L})$ with the induced metric is a flat manifold (in particular the leaves are Bieberbach manifolds). 

In this work we show that  on a simply-connected manifold under the updated definition, such $B$-foliations with leaves of positive constant dimension are given by torus actions.

\begin{mtheorem}\th\label{MT: flat foliations are given by group actions}
Consider $(M,\fol,g)$ a regular $B$-foliation on a compact simply-connected Riemannian manifold. Then the foliation $\fol$ is induced by an isometric torus action.
\end{mtheorem}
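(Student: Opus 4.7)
The plan is to combine the collapse construction from this paper with the Cheeger--Fukaya--Gromov structure theory of bounded-curvature collapse, and then use simply-connectedness of $M$ to upgrade a purely local torus structure to a global torus action.

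First, I would invoke the paper's main collapse theorem to produce a one-parameter family of Riemannian metrics $\{g_\varepsilon\}_{\varepsilon>0}$ on $M$ whose sectional curvatures are uniformly bounded in $\varepsilon$, and along which the leaves of $\fol$ shrink uniformly to points. This yields a bounded-curvature Gromov--Hausdorff collapse of $(M,g_\varepsilon)$ onto the leaf space $M/\fol$, of pure fiber dimension $k = \dime \Le$ for any leaf $\Le \in \fol$.

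Second, I would apply the Cheeger--Fukaya--Gromov structure theorem to this collapse: the manifold $M$ is then equipped with a pure nilpotent Killing structure of positive rank whose orbits agree with the leaves of $\fol$. Since the leaves of the $B$-foliation are flat and, by \cite[Theorem C]{Corro2023}, finitely covered by $k$-tori, this Killing structure must be abelian, and in fact a pure $\F$-structure of torus type whose orbits coincide with the leaves of $\fol$.

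Third, I would promote this pure $\F$-structure to a genuine global isometric action of $\T^k$. A pure $\F$-structure is by definition only locally modeled on torus actions, glued via finite covers that encode the holonomy of the leaves; the obstructions to its globalization live in the fundamental group of $M$. Because $M$ is simply-connected, all such covers must be trivial, so the local $\T^k$-actions glue consistently into a single isometric $\T^k$-action on $M$ whose orbit decomposition recovers $\fol$.

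The main obstacle I anticipate is the third step, namely controlling the monodromy of the $\F$-structure and verifying that simply-connectedness suffices to kill it. In particular, since orbits of a torus action are necessarily tori, one must rule out non-toral flat Bieberbach leaves; I expect this to follow by combining \cite[Theorem C]{Corro2023} with the flatness of the induced metric on each leaf and the regularity of $\fol$. Making this rigorous and then assembling the local $\T^k$-actions across overlapping charts, consistently with the bundle-like metric, is the technical heart of the argument.
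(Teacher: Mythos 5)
Your first two steps (collapse with bounded curvature, then the Cheeger--Fukaya--Gromov $\Ns$-structure, which on a manifold with finite fundamental group is a torus structure, and on a simply-connected one a global torus action by Rong) match the paper's setup, and the globalization step you flag as the ``technical heart'' is in fact the citable, easy part. The genuine gap is in your second step, where you assert that the nilpotent Killing structure produced by the collapse has orbits that \emph{agree with the leaves of} $\fol$. Nothing in the structure theory gives this: the $\Ns$-structure obtained from a sufficiently collapsed metric only has orbits of small diameter with respect to that metric; there is no a priori relation between these orbits and the foliation you shrank, and the associated invariant metric $g^\varepsilon$ is close to the \emph{collapsed} metric $g_\delta$, not to the original $g$. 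So even after globalizing you would only have, for each $\varepsilon$, a torus action that is isometric for a nearby invariant metric and whose orbits merely approximate the leaves; you would still owe an argument producing a single torus action, isometric for $g$ itself, with orbits exactly equal to the leaves.

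This identification is precisely what the paper's proof is about. It rescales the CFG-invariant metrics to metrics $\bar{g}^\varepsilon_{N(\varepsilon)}$ that converge uniformly to $g$ (so Theorem~\ref{MT: Approximating flat foliations by N-structures} only yields convergence of the action fields to directions \emph{tangent} to the leaves, not equality of orbits and leaves), shows that the quotients $(M/T^k_\varepsilon, d^\ast_{\bar{g}^\varepsilon_{N(\varepsilon)}})$ converge to the leaf space $(M/\fol,d^\ast_g)$, and then takes an equivariant Gromov--Hausdorff limit of the approximating torus actions to obtain a closed subgroup $G\subset \Iso(M,g)$ with $M/G$ isometric to $M/\fol$. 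It then proves $T_pG(p)=T_pL_p$ over the principal stratum, invokes the elementary lemma that two regular foliations agreeing on a dense set agree everywhere to conclude $G(x)=L_x$ for all $x$, and only at the very end identifies $G$ as a torus (using that the isotropy is finite and the leaves are tori); note that a priori the limit $G$ is just a compact group, so even the toral nature of the final action requires this last argument rather than being automatic. Without some version of this limiting/rigidity argument, your proposal stops short of the statement: it produces torus actions near $\fol$, but not the torus action inducing $\fol$. (Your worry about non-toral Bieberbach leaves, by contrast, is not an issue: regular leaves of an $A$-foliation on a compact simply-connected manifold are homeomorphic to tori, and flatness plus Bieberbach rigidity makes them standard tori.)
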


To prove \th\ref{MT: flat foliations are given by group actions} we compare regular $B$-foliations to the  so called $\F$-structures and $\Ns$-structures introduced by Cheeger, Gromov and Fukaya in \cite{CheegerGromov1986} and \cite{CheegerFukayaGromov1992}.  These are other types of foliations, and were introduced  when studying the notion of  ``collapse with bounded curvature'' onto metric spaces. We present now the definition of $\F$-structures (for the definition of $\Ns$-structures see Section~\ref{SS: N- and F-structures}). We recall that an $\F$-structure is given by an open cover $\{U_i\}_{i\in \mathfrak{I}}$ of $M$, together with a $T^{k_i}$-action on a finite  normal (or Galois) cover $\pi_i\colon \tilde{U}_i\to U_i$ for each index, such that the torus actions satisfy certain compatibility conditions when passing from one neighborhood to another, and they are compatible with the action of the deck-transformation group of $\pi_i$ (see for example \cite[Definition 19.2]{Fukaya1990}). An $\F$-structure induces a partition of $M$ into submanifolds called the orbits of the $\F$-structure. We say that a sequence of Riemannian metrics $\{g_n\}_{n\in \N}$ on a compact manifold $M$ \emph{collapses with bounded curvature} if there exist constants $\lambda\leq \Lambda\in \R$ such that
\[
\lambda\leq \Sec(g_n)\leq \Lambda,
\]
and for any $p\in M$ the injectivity radius of $g_n$ at $p$ goes to $0$ as $n\to \infty$ (see for example \cite[p. 69]{Pansu1985}). By \cite{CheegerGromov1986} and \cite{CheegerGromov1990} we have that $\F$-structures  characterize collapse with bounded curvature: A compact manifold collapses with bounded curvature if and only if it admits an $\F$-structure whose orbits have positive dimension. 

For the proof of \th\ref{MT: flat foliations are given by group actions}, we prove that given a regular $B$-foliation $(M,\fol,g)$ on a compact manifold with leaves of positive dimension we may always collapse $M$ with bounded curvature and bounded diameter.

\begin{mtheorem}\th\label{MT: Collapse of flat foliations}
Consider $(M,\fol,g)$ a regular non-trivial $A$-foliation on a compact Riemannian manifold, such that each leaf $L\in \fol$ with the induced Riemannian metric $g|_L$ is flat, i.e. a $B$-foliation. Then by shrinking the directions tangent to the leaves of $\fol$  we may collapse with bounded curvature and uniformly bounded diameter the manifold $(M,g)$.In particular we show there exists a family of Riemannian metrics $\{g_\delta\}_{1\geq\delta>0}$ on $M$ such that $\lambda\leq \Sec(g_\delta)\leq \Lambda$, $\diam(g_\delta)\leq D$ and $\Inj(g_\delta)\to 0$ as $\delta\to 0$. 
\end{mtheorem}

We recall that in the study of collapse  with bounded  curvature in \cite{CheegerFukayaGromov1992}, given such a collapsing sequence the authors construct $\Ns$-structures and highly symmetric $\Ns$-invariant Riemannian metrics which are bounded by the collapsing sequence (see \th\ref{T: Bounded collapse in 1-connnected manifolds}). When the manifold $M$ has finite fundamental group an $\Ns$-structure is given by an $\F$-structure (see \th\ref{T: N-structure over manifold with finite fundamental group are given by T-structure}). 
In the case when $M$ has finite fundamental group we use the relationship between $\Ns$-structures and $\F$-structures, the invariant metrics and the explicit collapsing sequence given in the proof of \th\ref{MT: Collapse of flat foliations} to show that the geometry of  $(M,\fol,g)$ where $\fol$ is a regular $B$-foliation with leaves of positive dimension is approximated by geometries invariant under $\F$-structures. 
We can also point to a relationship between the tangent spaces of the leaves of the foliations and the tangent spaces of the orbits of the $\F$-structure.

\begin{mtheorem}\th\label{MT: Approximating flat foliations by N-structures}
Consider $(M,\fol,g)$ a regular non trivial $A$-foliation on a compact $m$-dimensional Riemannian manifold with finite fundamental group and $m>2$, such that each leaf $L\in \fol$ with the induced Riemannian metric $g|_L$ is flat, i.e. a $B$-foliation. Then there exists a sequence of Riemannian metrics $\{\bar{g}^\varepsilon_N(\varepsilon)\}_{1\geq\varepsilon>0}$ with each metric invariant under an $\F$-structure, and such that as $\varepsilon\to 0$, we have that $(M,\bar{g}^\varepsilon_N(\varepsilon))$ converges in the Gromov-Hausdorff sense to $(M,g)$ as $\varepsilon\to 0$. Moreover, given $p\in M$ fixed and taking $x\in T_e T^k$ fixed in the Lie algebra of the torus tracing the leaves of the $\F$-structure around $p$, we have that the action field $X^\ast(p)$ converges to a direction tangent to the leaf $L_p$.
\end{mtheorem}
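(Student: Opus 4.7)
The plan is to combine the explicit collapsing sequence provided by \th\ref{MT: Collapse of flat foliations} with the Cheeger--Fukaya--Gromov smoothing described in \th\ref{T: Bounded collapse in 1-connnected manifolds}, and then to exploit the finiteness of $\pi_1(M)$ to realise the resulting $\Ns$-invariant metrics as $\F$-invariant metrics via \th\ref{T: N-structure over manifold with finite fundamental group are given by T-structure}.

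First, from \th\ref{MT: Collapse of flat foliations} I obtain an explicit collapsing sequence $\{g_\varepsilon\}$ on $M$, constructed by shrinking the directions tangent to the leaves of $\fol$ by a factor of $\varepsilon$, with $|\mathrm{Sec}(g_\varepsilon)|$ and $\diam(M,g_\varepsilon)$ uniformly bounded. The Cheeger--Fukaya--Gromov theorem then yields, for each sufficiently small $\varepsilon>0$, a natural number $N=N(\varepsilon)$ and a smoothed Riemannian metric $\bar{g}^\varepsilon_N$ invariant under an $\Ns$-structure $\mathcal{N}_\varepsilon$ on $M$, and satisfying a bi-Lipschitz estimate $e^{-\tau(\varepsilon)}g_\varepsilon\leq \bar{g}^\varepsilon_N\leq e^{\tau(\varepsilon)}g_\varepsilon$ with $\tau(\varepsilon)\to 0$. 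Since $\pi_1(M)$ is finite and $m>2$, \th\ref{T: N-structure over manifold with finite fundamental group are given by T-structure} upgrades $\mathcal{N}_\varepsilon$ to an $\F$-structure $\mathcal{T}_\varepsilon$ leaving $\bar{g}^\varepsilon_N$ invariant.

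To produce a sequence that converges to $(M,g)$ rather than to the collapsed limit, I undo the leaf shrinking: the metric $\bar{g}^\varepsilon_N(\varepsilon)$ denotes $\bar g^\varepsilon_N$ with the leaf-tangent directions rescaled back up by $1/\varepsilon$. Since the shrinking in \th\ref{MT: Collapse of flat foliations} acts diagonally in a frame adapted to $T\fol$, the bi-Lipschitz estimate between $g_\varepsilon$ and $\bar{g}^\varepsilon_N$ is preserved (up to a factor tending to $1$) under this rescaling. Consequently $\bar{g}^\varepsilon_N(\varepsilon)$ is bi-Lipschitz equivalent to $g$ with constant tending to $1$, and thus $(M,\bar{g}^\varepsilon_N(\varepsilon))$ converges to $(M,g)$ in the Gromov--Hausdorff topology as $\varepsilon\to 0$. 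Each metric in the sequence is invariant under the $\F$-structure $\mathcal{T}_\varepsilon$, because rescaling in a direction preserved by $\mathcal{T}_\varepsilon$ preserves its invariance.

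It remains to identify the limit of the action fields. The key point is that in the Cheeger--Fukaya--Gromov construction, the $\Ns$-structure arises from centralising the local holonomy of the frame bundle of the collapsing metric, so the orbits of $\mathcal{N}_\varepsilon$ are tangent to the directions along which collapse occurs. In our construction the collapse is along $T\fol$, so the orbit tangent spaces of $\mathcal{T}_\varepsilon$ approximate, as $\varepsilon\to 0$, subspaces of the leaf tangent spaces at each $p\in M$. Fixing $p\in M$ and $x\in T_eT^k$, the action field $X^\ast(p)$ is obtained by infinitesimally pushing forward $x$ via the local $T^k$-action, and the previous observation forces $X^\ast(p)$ to accumulate on a vector tangent to $L_p$. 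The main obstacle is precisely this last step: carefully tracking how the abstract CFG smoothing interacts with the holonomy of the $B$-foliation, so that the abelian commuting Killing fields produced by the construction have orbits asymptotic to $T\fol$ rather than to some other subbundle compatible with the local curvature tensor. This will require using both the explicit leaf-wise shrinking of \th\ref{MT: Collapse of flat foliations} and the flatness of the leaves to show that the normal connection coincides in the limit with the Bott connection of $\fol$.
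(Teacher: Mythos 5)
The first half of your proposal follows the paper's route: collapse via \th\ref{MT: Collapse of flat foliations}, apply \th\ref{T: Existence of N structures on} to get an $\Ns$-invariant metric $e^{\pm\varepsilon}$-close to the shrunken metric, invoke \th\ref{T: N-structure over manifold with finite fundamental group are given by T-structure} using finiteness of $\pi_1(M)$, and then rescale the leaf-tangent directions back up; since the bi-Lipschitz inequality is a pointwise quadratic-form inequality it survives the substitution $Z\mapsto Z^\perp+N(\varepsilon)Z^\top$, and the Gromov--Hausdorff convergence of $(M,\bar{g}^\varepsilon_{N(\varepsilon)})$ to $(M,g)$ follows exactly as in the paper. (One caveat: your one-line justification that the rescaled metric is still invariant under the $\F$-structure presupposes that the torus action preserves the splitting $TM=T\fol\oplus\nu\fol$, which is not something you have established at that stage; be careful not to argue circularly there.)

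The genuine gap is the ``moreover'' clause about the action fields, which you explicitly leave open and propose to attack by analyzing the internals of the Cheeger--Fukaya--Gromov construction (frame-bundle holonomy, identification of the limiting normal connection with the Bott connection). Your heuristic that ``the orbits of $\mathcal{N}_\varepsilon$ are tangent to the directions along which collapse occurs'' is precisely the assertion to be proved, and nothing in the statement of \th\ref{T: Existence of N structures on} delivers it directly. The paper closes this step without opening the CFG black box, by playing three metrics against each other. Fix $x\in T_eT^k$ and let $X_\varepsilon$ be the projected action field; decompose $X_\varepsilon=X_\varepsilon^\top+X_\varepsilon^\perp$ with respect to $g$ and $\fol$. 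Since the shrunken metric $g_{N(\varepsilon)}$ agrees with $g$ on $\nu\fol$ and keeps the splitting orthogonal, one has
\[
e^{-\varepsilon}\|X_\varepsilon^\perp\|^2_{g}
= e^{-\varepsilon}\|X_\varepsilon^\perp\|^2_{g_{N(\varepsilon)}}
\leq e^{-\varepsilon}\|X_\varepsilon\|^2_{g_{N(\varepsilon)}}
< \|X_\varepsilon\|^2_{g^\varepsilon_{N(\varepsilon)}},
\]
and the right-hand side tends to $0$ because the fibers of $\widetilde{\eta}_\varepsilon$ have $g^\varepsilon_{N(\varepsilon)}$-diameter less than $\varepsilon$ while $x$ is fixed. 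Hence the $g$-normal component of the action field vanishes in the limit: the orbits can only remain ``long'' in leaf directions, because those are exactly the directions scaled down in $g_{N(\varepsilon)}$, while any normal component would be seen at full size by $g$. This quantitative argument replaces the holonomy/Bott-connection analysis you flag as the main obstacle; without it (or an equivalent substitute) your proposal does not prove the second assertion of the theorem.
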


\begin{rmk}
By \th\ref{L: uniform convergence of bar g to g } we also have that the Riemannian tensors $\bar{g}_{N(\varepsilon)}^\varepsilon$ converge uniformly to $g$.
\end{rmk}

We remark that by \th\ref{MT: Collapse of flat foliations}, given a regular $B$-foliation on a compact manifold $M$ with leaves of positive dimension  we have a continuous family $g_\delta$ of Riemannian metrics collapsing with bounded curvature to the leaf space. Combining this with  \cite[Theorem 0.4]{PetruninRongTuschmann1999} we conclude that we  have an upper bound for the minimum of the sectional curvature of a compact manifold equipped with a $B$-foliation whose leaves have positive dimension. 

\begin{mcorollary}
Let $(M,\fol,g)$ be a regular $B$-foliation on a compact manifold with leaves of positive dimension. Then we have for $\lambda$ equal to the minimum of the sectional curvature of $g$, that $\lambda\leq 0$.
\end{mcorollary}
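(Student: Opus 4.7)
The plan is to combine \th\ref{MT: Collapse of flat foliations} with \cite[Theorem 0.4]{PetruninRongTuschmann1999}, exactly as advertised in the remark preceding the statement.

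First, I would apply \th\ref{MT: Collapse of flat foliations} to $(M,\fol,g)$ to obtain a one-parameter family $\{g_\delta\}_{\delta>0}$ of Riemannian metrics on $M$ arising from shrinking the directions tangent to the leaves of $\fol$. By the conclusion of that theorem, this family has uniformly two-sided bounded sectional curvatures and uniformly bounded diameter. Because $\fol$ is regular with leaves of positive dimension and (being a closed foliation on a compact manifold) its leaves are compact submanifolds of $M$, shrinking along the leaves forces the injectivity radius at every point of $M$ to tend to zero as $\delta \to 0$; in particular $\vol(M,g_\delta)\to 0$. Thus $\{g_\delta\}$ is a genuine collapsing family with bounded curvature and bounded diameter.

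Next, I would invoke \cite[Theorem 0.4]{PetruninRongTuschmann1999}, which asserts that a closed manifold admitting such a bounded-curvature, bounded-diameter collapse cannot carry any Riemannian metric whose sectional curvature is bounded below by a positive constant; equivalently, the infimum of the sectional curvature of every metric on such an $M$ must be non-positive. Applied to the given metric $g$, this yields $\lambda = \inf \mathrm{Sec}(g) \leq 0$, which is the assertion of the corollary.

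I do not anticipate any substantive obstacle: the uniform two-sided curvature bound and the bounded diameter required as hypotheses by \cite[Theorem 0.4]{PetruninRongTuschmann1999} are already built into the conclusion of \th\ref{MT: Collapse of flat foliations}, while the actual collapse of $\{g_\delta\}$ is immediate from the positive dimension and compactness of the leaves of $\fol$. The only mild point to check is that the specific continuous family produced by the proof of \th\ref{MT: Collapse of flat foliations} yields, in particular, a sequence of the form required by Petrunin-Rong-Tuschmann, which is automatic by picking any $\delta_n\to 0$.
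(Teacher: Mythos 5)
Your overall skeleton (\th\ref{MT: Collapse of flat foliations} combined with \cite[Theorem 0.4]{PetruninRongTuschmann1999}) is the same as the paper's, but the way you invoke the cited theorem is a genuine gap: \cite[Theorem 0.4]{PetruninRongTuschmann1999} does not assert that a closed manifold admitting a bounded-curvature, bounded-diameter collapse ``cannot carry any Riemannian metric whose sectional curvature is bounded below by a positive constant.'' That statement is false as a general principle: the Berger metrics collapse $\Sp^3$ with sectional curvature in $[0,4]$ and diameter at most $\pi$, while $\Sp^3$ also carries the round metric with $\sec \equiv 1$. What Petrunin--Rong--Tuschmann exclude is a (continuously) collapsing family whose members \emph{themselves} have uniformly positively pinched curvature; the positive lower bound must be carried by the collapsing metrics, not by some other metric on the same manifold. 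Hence your final step, deducing $\inf \mathrm{Sec}(g)\leq 0$ for the given foliated metric $g$ from the mere existence of some bounded-curvature, bounded-diameter collapse of $M$, does not follow -- indeed it would prove that no metric on such an $M$ has a positive lower curvature bound, which the $\Sp^3$ example refutes.

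The paper's argument is tied to the specific family $g_\delta$ of \eqref{EQ: collapsing sequence}: the proof of \th\ref{MT: Collapse of flat foliations} asserts that this family satisfies $\lambda \leq \mathrm{Sec}(g_\delta)\leq \Lambda$ with the \emph{same} bounds $\lambda,\Lambda$ as the original metric $g$, together with $\mathrm{Diam}(M,g_\delta)\leq \mathrm{Diam}(M,g)$. Consequently, if one had $\lambda>0$, the continuous family $g_\delta$ would be a collapsing family of uniformly positively pinched metrics with bounded diameter, which is exactly the situation ruled out by \cite[Theorem 0.4]{PetruninRongTuschmann1999}; this is how the paper reaches $\lambda\leq 0$. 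The ingredient missing from your proposal is precisely this persistence of the lower curvature bound of $g$ along the shrinking family (a property special to the construction in the proof of \th\ref{MT: Collapse of flat foliations}, not a formal consequence of ``collapse with bounded curvature''), and without it, or some substitute, the corollary cannot be deduced from Petrunin--Rong--Tuschmann.
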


We recall that an $\F$-structure is \emph{pure}, if for any point $p\in M$ in the intersection  of two neighborhoods $U_i\cap U_j$ given in  definition of the $\F$-structure, the images containing $p$ of the orbits of the torus actions under $\pi_i$ and $\pi_j$ coincide (see \cite[p. 228]{Fukaya1990}). By \cite{CheegerGromov1986}, pure $\F$-structures are examples of $B$-foliations as defined in this work. This fact motivated the following question in \cite{GalazGarciaRadeschi2015}:

\begin{question}
How do the concepts of $A$-foliations and $\F$-structures relate to each other?
\end{question}

\th\ref{MT: flat foliations are given by group actions}  sheds some light on this question: Regular flat $A$-foliations (i.e. $B$-foliations) on simply-con\-nec\-ted manifolds are given by torus actions, and thus are $\F$-structures.

We highlight that on a simply-connected manifold, an $\F$-structure is induced by a torus action (see for example \cite[Proposition 3.1, Lemma 3.2 and Lemma 4.1]{Rong1996}). Thus it is a pure $\F$-structure, and therefore a $B$-foliation. Still such $B$-foliations may have singular leaves. For example torus actions of cohomogeneity $2$ can have singular leaves \cite{Oh1983}, \cite{Corro2023}. Nonetheless, when $M$ has finite fundamental group  a regular $A$-foliation with flat leaves is the limit of $\F$-structures by \th\ref{MT: Approximating flat foliations by N-structures}.

We point out that not all regular $A$-foliations admit a Riemannian metric making it a $B$-foliation. In \cite{FarrellWu2019} the authors construct manifolds of dimension $m\geq 9$ with non-trivial finite fundamental group and each equipped with a regular $A$-foliation whose leaves are homeomorphic to the $(m-4)$-dimensional torus but not diffeomorphic to the $(m-4)$-dimensional torus. These foliations do not admit any Riemannian metric making them into a $B$-foliation, since by  \cite{Bieberbach} a flat  manifold is homeomorphic to the torus if and only if it is diffeomorphic to the torus. 

It would be interesting to know if there exists an example of an $A$-foliation on a compact simply-connected manifold whose leaves are exotic tori. Since torus actions induce $B$-foliations, a more general question is if an $A$-foliation on a simply-connected manifold can be made into a $B$-foliation by a change of the metric.

\begin{question}\th\label{Q: Question 2}
Is there an $A$-foliation on a simply-connected manifold which is not given by a torus action? 
\end{question}

From our results, this is related to the following weaker question for foliations with leaves of positive dimension: Given any  $A$-foliation on a simply-connected manifold,  is there an other foliated Riemannian metric $\bar{g}$ on $M$ making it into a $B$-foliation?

In \cite[Theorem A]{Corro2023} a negative answer to \th\ref{Q: Question 2} is given for $A$-foliations of codimension $2$. Namely in \cite{Corro2023}, the author explored the problem of whether we can compare the diffeomorphism type of two compact simply-connected manifolds with $A$-foliations via their leaf spaces equipped with local information of the foliation, focusing on the case of $A$-foliations of codimension $2$. The positive answer given in \cite[Theorem C and Lemma 5.11]{Corro2023} to this comparison problem together with the classification of tours actions of cohomogeneity $2$ in \cite{Oh1983}, allows us to conclude that an $A$-foliation of codimension $2$ on a compact simply-connected manifold is given up to a foliated homeomorphism by a torus action. Then, by analyzing the local information of the foliation we are able to improve the conclusion to a foliated diffeomorphism. 

A crucial point in the proof of \cite[Theorem A]{Corro2023} is that for an $A$-foliation of codimension $2$ on a compact simply-connected manifold the stratification of the leaf space  is simple and can be explicitly described. In general, even for singular Riemannian foliations on  vector spaces with finite dimension, it is not clear how to compare the foliations via their leaf spaces (see  \cite[Question 1.2]{GorodskiLytchak2014}). We point out that the proof presented in \cite{Corro2023} is of a topological nature and only uses the fact that the presence of a foliated Riemannian metric guarantees a nice local structure of the foliation on  tubular neighborhoods of leaves (see \cite{MendesRadeschi2019}). Moreover, to be able to extend the ideas in \cite{Corro2023} to higher dimensions, we would require to impose extra topological conditions on $M$ and on the leaf space of the foliation. Thus, to the best of our knowledge, the approach presented in this manuscript is a different approach for comparing foliated structures to what exists in the literature.

The present work was motivated by the need to give geometric conditions on $A$-foliations that force them to be given by torus actions. Due to \cite{CheegerFukayaGromov1992} and \cite{Rong1996} it follows that on simply-connected manifolds, torus actions with orbits of positive dimension are characterized by the phenomenon of collapse with bounded curvature (see \th\ref{T: Bounded collapse in 1-connnected manifolds}). Thus, a natural approach to finding sufficient geometric conditions for an $A$-foliation to be induced by a group action, is to  find sufficient geometric conditions which let us collapse with bounded curvature the manifold by shrinking  the leaves of a given $A$-foliation on a simply-connected manifold. 

We highlight  that to the best of our knowledge in the literature there are few references on the subject of deforming foliations. In \cite{FarrellJones1998} the authors show that for a particular class of $1$-dimensional regular foliations, a portion of the manifold has a cover into ``long and thin'' submanifolds extending the notions in \cite{CheegerFukayaGromov1992} to the foliated setting. In \cite{CranicMestreStruchiner2020} the authors consider a parametrized family of Lie groupoids and give  rigidity results for such ``deformations'' of Lie groupoids. Lie groupoids are closely related to regular foliations (see \cite{MoerdijkMrcun2003}), and also singular Riemannian foliations (see \cite{AlexandrinoInagakideMeloStruchiner2022}). Applying these rigidity results for Lie groupoids,  the authors in \cite{delHoyoFernandes2019} consider on a compact manifold a family of regular foliations  $\fol_t$ parametrized by a real parameter $t\in [0,1]$, each having compact leaves and Hausdorff leaf space, and show that at $t=0$ the existence of a leaf $L$ with $H^1(L,\R) =0$ is a sufficient condition for such a parametrization to be rigid, i.e. $\fol_t = \fol_0$ for all $t\in [0,1]$. This is an analogous result to the conclusions presented in \cite{EpsteinRosenberg1977} and in \cite{Hamilton1978}:  Consider two regular foliations $\fol_1$, and $\fol_2$ with compact leaves on a fixed  manifold $M$ without boundary, such that they are close to each other in an appropriate sense, and such that the leaf spaces of the foliations are Hausdorff  spaces. In the case that there exists a leaf $L$ in one of the  foliations with $H^1(L,\R) =0$,  there exists a foliated diffeomorphism between the foliated manifolds. For group actions there is a general rigidity result by Grove and Karcher \cite{GroveKarcher1973}: Given a compact Lie group $G$, a compact manifold $M$, and two group actions $\mu_1\colon G\to\mathrm{Diff}(M)$ and $\mu_2\colon G\to \mathrm{Diff}(M)$, if the images $\mu_1(G)$ and $\mu_2(G)$ are $C^1$ close enough in $\mathrm{Diff}(M)$, then the images are conjugated in $\mathrm{Diff}(M)$.

We point out that the proof of \th\ref{MT: flat foliations are given by group actions} is also a  rigidity result for a deformation procedure, since the proof consist of approximating our foliation by torus actions using \th\ref{MT: Approximating flat foliations by N-structures}, and showing that there exists a limit torus action whose orbits agree with the leaves of the foliation. Nonetheless, we point out that  to the best of our knowledge there are no general rigidity results for singular Riemannian foliations as those presented in \cite{delHoyoFernandes2019}, \cite{EpsteinRosenberg1977}, and \cite{Hamilton1978}. Moreover, the rigidity result presented in \th\ref{MT: flat foliations are given by group actions} agree with the conclusions in \cite{GroveKarcher1973}. One can also compare \th\ref{MT: flat foliations are given by group actions} with \cite[Proposition 0.8]{PetruninRongTuschmann1999} and comments after. In our work we also point to the existence of a fixed torus action on $M$, but we do show that we can take our foliated metric as the limit of the perturbed collapsing sequence. 

Our article is organized as follows: In Section~\ref{S: Preliminaries} we review the necessary preliminaries on singular Riemannian foliations, as well as the concepts of $A$-foliations, $B$-foliations, and $\Ns$-structures. We also review Alexandrov spaces and equivariant convergence of metric spaces. At the end of Section~\ref{S: Preliminaries}, we present the relevant theorems of collapse theory used in our proofs. In Section~\ref{S: Proofs of main theorems} we give the proofs of our main theorems. We begin by presenting the proof of \th\ref{MT: Collapse of flat foliations}, and then proceed with the proof of \th\ref{MT: Approximating flat foliations by N-structures}. We use then these results and their proofs to  prove  \th\ref{MT: flat foliations are given by group actions}.

\section*{Acknowledgments}

I thank Wilderich Tuschmann and Karsten Grove for discussions on $\F$-structures, and I thank Fernando Galaz-Garcída and Alexander Lytchak for discussions on collapse theory. I also thank John Harvey for comments that improved the proofs of the main theorems. Last I thank Jesús Núñez-Zimbrón and Jaime Santos for pointing me to the results about equivariant convergence. I also thank the anonymous referee for suggestions that improved the presentation of the proofs.

\section{Preliminaries}\label{S: Preliminaries}

In this section we present the preliminaries necessary for our results. We begin by presenting the definition of a singular Riemannian foliation, then present the notion of the infinitesimal foliation and the holonomy of a leaf. We then present the notions of $A$- and $B$-foliations. We also present the necessary results from the theories of convergence of Riemannian manifolds  with curvature bounds, equivariant convergence of metric spaces, and metric spaces with a lower curvature bound.

\subsection{Singular Riemannian foliations}\label{SS: Singular Riemannian foliations}

A \emph{singular Riemannian foliation} $(M,\fol,g)$ on a Riemannian manifold $(M,g)$ is a partition $\fol$ of $M$ into connected injectively immersed submanifolds $L\in\fol$ called \emph{leaves}, such that the following hold:
\begin{enumerate}[(i)]
\item There exists a family of smooth vector fields $\{X_\alpha\}_{\alpha\in \mathcal{I}}\subset \mathfrak{X}(M)$ on $M$, such that for each point $p\in M$ the vector fields span the tangent space at $p$ of the leaf $L_p$ containing $p$.
\item Given $\gamma\colon [0,1]\to M$ a geodesic perpendicular to $L_{\gamma(0)}$, then $\gamma$ is perpendicular to $L_{\gamma(t)}$, for all $t\in [0,1]$.
\end{enumerate}

The first condition insures that the singular distribution $\Delta$ in $TM$ given by $\Delta(p) = T_p L_p$ is smooth. The second condition is equivalent to the leaves of  $\fol$ being locally equidistant. When the leaves of the foliation are closed submanifolds, we say that \emph{the foliation is closed}. We point out that in this case the leaves are embedded submanifolds.

We define the leaves of maximal dimension to be \emph{regular leaves}, and the ones that do not have  maximal dimension to be \emph{singular leaves}. The \emph{codimension of the foliation} $\mathrm{codim}(\fol)$ is equal to the codimension of any regular leaf in $M$, and the \emph{dimension of $\fol$}, denoted as $\dim(\fol)$, to be the dimension of the regular leaves.

For a closed singular Riemannian foliation $(M,g,\fol)$ and $0\leq \ell\leq \dim(\fol)$ we define the $\ell$-dimensional stratum as
\[
\Sigma_\ell = \{p\in M\mid \dim(L_p) = \ell\},
\] 
and point out that each connected component $C$ of $\Sigma_\ell$ is an embedded submanifold of $M$ (see \cite{Radeschi2012}). We denote  by $\Sigma^p$ the connected component of $\Sigma_\ell$ containing $p$, where $\ell= \dim(L_p)$. The collection $\{\Sigma^p\mid p\in M\}$ give the \emph{canonical stratification of $M$ by $\fol$}. For the top dimension $\dim(\fol)$, the set $\Sigma_{\dim(\fol)}$ is open, dense and connected. We refer to it as the \emph{regular stratum}, and denote it by $M_\reg = \Sigma_{\dim(\fol)}$. In the case when $M_\reg = M$ we say that the foliation $\fol$ is \emph{regular}. We also remark that all other strata $\Sigma_\ell$ have codimension at least $2$ (\cite[Section 2.3]{GalazGarciaRadeschi2015}).

We equip the \emph{quotient space} $M/\fol$ with the quotient topology, making the \emph{quotient map} $\pi\colon  M\to M/\fol$ continuous. We refer to $M/\fol$ equipped with the quotient topology as the \emph{leaf space of $\fol$}. When the foliation is closed and $M$ is complete, the Riemannian distance on $M$ induces an inner metric, called the \emph{quotient metric} on the leaf space, and it induces the quotient topology. Moreover as pointed out in \cite[p. 2943]{GroveMorenoPetersen2019}, for a closed foliation on a complete manifold, the leaf space equipped with the quotient metric is locally an Alexandrov space (see Section~\ref{SS: Alexandrov geometry} for a definition of an Alexandrov space).

Given a singular Riemannian foliation $\fol$ on a vector space $V$ equipped with an inner product $g$, we say that $(V,\fol,g)$ is an \emph{infinitesimal foliation} if for the origin $\bar{0}\in V$ we have $L_{\bar{0}}=\{\bar{0}\}$. 

We now consider $(M,\fol,g)$ a singular Riemannian foliation on a complete Riemannian manifold, and fix $p\in M$. We denote by $\nu_p(M,L_p)=  \{v\in T_p M\mid g(p)(v,x) =0\mbox{ for any } x\in T_p L_p\}$ the set of \emph{normal space to the leaf at $p$}. Given $\varepsilon>0$ we consider the \emph{closed normal disk of radius $\varepsilon$} defined as $\nu_p^\varepsilon(M,L_p) =\{v\in \nu_p(M,L_p)\mid \|v\|^2_g \leq \varepsilon\}$. Then by fixing $\varepsilon>0$ small enough (for example $\varepsilon$ smaller than the injectivity radius at $p$), we may assume that $\expo_p\colon \nu_p^\varepsilon(M,L_p)\to S_p = \exp_p(\nu_p^\varepsilon(M,L_p))\subset M$ is a diffeomorphism. For $v\in \nu_p^\varepsilon(M,L_p)$ we define $\mathcal{L}_v\subset \nu_p^\varepsilon(M,L_p)$ to be the connected component of \linebreak$\expo_p^{-1}(L_{\expo_p(v)}\cap S_p)$ containing $v$. In this way, we obtain a partition $\fol_p(\varepsilon)$ on $\nu_p^\varepsilon(M,L_p)$. By \cite[Lemma 6.2]{Molino} the foliation $\fol_{p}(\varepsilon) $ is invariant under homotheties $h_{\lambda}\colon \nu_p^\varepsilon(M,L_p)\to \nu_p^{\lambda\varepsilon}(M,L_p)$, with $h_{\lambda}(v) = \lambda v$. From this we see that $\fol(\varepsilon)$ is independent of $\varepsilon$, and thus we can extend  $\fol_p(\varepsilon)$ to a partition $\fol_p$ on $\nu_p(M,L_p)$ in a unique way. Moreover, \cite[Proposition 6.5]{Molino} states that $(\nu_p(M,L_p),\fol_p,g_p^\perp)$ is an infinitesimal foliation, where $g_p^\perp = g(p)|_{\nu_p(M,L_p)}$. We refer to $(\nu_p(M,L_p),\fol_p,g_p^\perp)$ as the \emph{infinitesimal foliation of $\fol$ at $p$}. 

The following local description of the foliation is going to be of use later on:

\begin{prop}[Proposition 2.17 in \cite{Radeschi2017}]\th\label{P: Plaque charts}
Let $(M,\fol,g)$ be a singular Riemannian foliation on an $m$-dimensional manifold, and for  any $p \in M$ we write  $k_p = \dim(L_p)$. Then  for $p\in M$ fixed there exists a coordinate system $(W, \phi)$, with $W\subset M$ and $\phi\colon W\to \R^m$ such that:
\begin{enumerate}[(1)]
\item $p \in W$, $\phi(W) = U \times V$, where $U \subset \R^{k_p}$ and $V \subset \R^{m-k_p}$ are open and bounded subsets containing the origin with smooth boundary.
\item $\phi(p) = (\bar{0},\bar{0})\in U\times V$.
\item $\phi^{-1}(U\times\{\bar{0}\}) = W\cap L_p$.
\item For any $q \in W$, $U \times \mathrm{proj}_2(\phi(q))\subset  \phi(L_{q} \cap W )$.
\item For fixed $(u_0,v_0)\in U\times V$ the curve $\phi(u_0,tv_0)$ is a geodesic of $M$ perpendicular to the leaves of $\fol$.
\end{enumerate}
\end{prop}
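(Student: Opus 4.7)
The plan is to construct the chart by combining a parametrization of the piece of $L_p$ near $p$ with the normal exponential map of $L_p$ in $M$, and then to verify the foliated compatibility (4) using the linearization of $\fol$ near the leaf $L_p$.

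First I would fix an orthonormal basis $e_1,\ldots,e_{k_p}$ of $T_pL_p$ and define a local parametrization $\alpha\colon U\to L_p$ by $\alpha(x)=\expo^{L_p}_p\!\bigl(\sum_i x^i e_i\bigr)$, where $\expo^{L_p}$ denotes the leaf exponential map and $U\subset\R^{k_p}$ is a small open bounded neighborhood of $0$ with smooth boundary chosen so that $\alpha$ is an embedding onto a neighborhood of $p$ in $L_p$. Next, I would fix an orthonormal basis $f_1,\ldots,f_{m-k_p}$ of $\nu_p(M,L_p)$ and extend it to a smooth orthonormal frame $\{E_i\}$ of $\nu(M,L_p)|_{\alpha(U)}$ by transporting along the radial rays $t\mapsto \alpha(tx)$ using the induced (Bott-type) connection on the normal bundle; the point of this choice is that the resulting trivialization intertwines the infinitesimal foliation $\fol_p$ at $p$ with the infinitesimal foliation $\fol_{\alpha(u)}$ at each point $\alpha(u)$, which is the linear-holonomy feature of a singular Riemannian foliation along a leaf. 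With $V\subset\R^{m-k_p}$ a small open bounded set around $0$ with smooth boundary, I would then set
\[
\Phi\colon U\times V\to M,\qquad \Phi(u,v)=\expo_{\alpha(u)}\!\Bigl(\sum_i v^i E_i(\alpha(u))\Bigr),
\]
and put $W=\Phi(U\times V)$, $\phi=\Phi^{-1}$. The differential $d\Phi_{(0,0)}$ splits as $d\alpha_0\oplus(v\mapsto \sum v^i f_i)$ and maps $\R^{k_p}\oplus\R^{m-k_p}$ isomorphically onto $T_pL_p\oplus\nu_p(M,L_p)=T_pM$, so by the inverse function theorem, and after shrinking $U,V$, the map $\Phi$ is a diffeomorphism onto $W$.

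Items (1) and (2) are immediate from the construction. Item (3) follows because $\Phi(u,0)=\alpha(u)\in L_p$ and, shrinking $W$ further if necessary, $W\cap L_p$ is precisely the embedded plaque $\alpha(U)$. Item (5) is a direct consequence of condition (ii) in the definition of a singular Riemannian foliation: for fixed $(u_0,v_0)$, the curve $t\mapsto\Phi(u_0,tv_0)$ is the geodesic issued from $\alpha(u_0)\in L_p$ with initial velocity $\sum_i v_0^i E_i(\alpha(u_0))\in \nu_{\alpha(u_0)}(M,L_p)$, hence perpendicular to $L_p$ at its starting point and therefore to every leaf of $\fol$ it meets.

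The main obstacle, and where the foliated structure enters in an essential way, is property (4). Given $q=\Phi(u_q,v_q)\in W$, I need to show that the whole curve $u\mapsto\Phi(u,v_q)$ lies inside $L_q$. The plan here is to combine the Homothetic Transformation Lemma (\cite[Lemma 6.2]{Molino}), which identifies the foliation on each small slice $\expo_{\alpha(u)}(\nu^\varepsilon_{\alpha(u)}(M,L_p))$ with the infinitesimal foliation $\fol_{\alpha(u)}$, together with the adaptedness of the frame $\{E_i\}$: by construction the transported vector $\sum_i v_q^i E_i(\alpha(u))$ sits in the leaf of $\fol_{\alpha(u)}$ corresponding, under the linear holonomy of $\fol$ along $L_p$, to the leaf of $\fol_p$ through $\sum_i v_q^i f_i$. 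Applying $\expo_{\alpha(u)}$, the points $\Phi(u,v_q)$ all belong to a single leaf of $\fol$, which at $u=u_q$ must be $L_q$. The technical heart of the argument is thus producing this ``foliation-adapted'' frame $\{E_i\}$, a step that rests on the local description of singular Riemannian foliations in a tubular neighborhood of a leaf.
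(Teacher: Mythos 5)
Your chart $\Phi(u,v)=\expo_{\alpha(u)}\bigl(\sum_i v^iE_i(\alpha(u))\bigr)$ has the right shape, and it does give (1), (2) and (5) directly from the inverse function theorem and axiom (ii) of a singular Riemannian foliation, and (3) after shrinking (at least when $L_p$ is closed and embedded, which is the only situation the paper uses). The genuine gap is in (4), and it is twofold. First, the ``foliation-adapted frame'' is asserted rather than produced: parallel transport of $\nu(M,L_p)$ along the rays $t\mapsto\alpha(tx)$ with respect to the metric normal connection (or an unspecified ``Bott-type'' connection) does not in general carry leaves of the infinitesimal foliation $\fol_p$ to leaves of $\fol_{\alpha(u)}$; the existence of linear transports with that property is exactly the nontrivial content of the holonomy maps of \cite{MendesRadeschi2019} (quoted in the paper as the theorem on holonomy maps of curves), so the step you yourself call the ``technical heart'' is in effect assumed. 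Second, and more seriously, even granting such a transport $G_u$, the statement ``$G_u$ is a foliated isometry $(\nu_p(M,L_p),\fol_p)\to(\nu_{\alpha(u)}(M,L_p),\fol_{\alpha(u)})$'' is an abstract isomorphism of infinitesimal foliations and does not determine which ambient leaf the corresponding infinitesimal leaves exponentiate into: from it you only conclude that $\expo_{\alpha(u)}(G_u(v_q))$ lies in \emph{some} leaf of $\fol$, namely the leaf through that point, not that this leaf is $L_q$ for every $u$. What (4) really requires is the stronger, equifocality-type property $\expo_{\alpha(u)}(G_u(v))\in L_{\expo_p(v)}$, i.e.\ that the transport moves normal vectors within one fixed ambient leaf after exponentiation; this is not contained in the result you invoke and is never argued in your proposal.

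For comparison, the paper itself gives no proof of this proposition (it is imported from \cite{Radeschi2017}), and the inexpensive way to secure (3)--(4) is to use axiom (i) instead: take smooth vector fields $X_1,\dots,X_{k_p}$ tangent to the leaves and spanning $T_pL_p$, and build the chart by composing their flows applied to a normal slice $\expo_p(\nu_p^\varepsilon(M,L_p))$; since flows of leafwise vector fields preserve each leaf setwise, every slice $U\times\{v\}$ automatically stays inside a single leaf. The price there is item (5), which is no longer automatic in the flow chart; alternatively one runs your normal-exponential construction but with the genuine holonomy transport together with its leaf-preserving (equifocality) property. Either way, the part of the argument you delegated to the ``adapted frame'' is precisely where the real work lies.
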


\subsection{Holonomy}

In this section we present for $(M,\fol,g)$ a singular Riemannian foliation the notion of the \emph{holonomy} of a closed leaf $L_p\in \fol$  which we simply denote by $L$. We denote by $\nu(L)\to L$ the normal bundle to $L$, and  we have the following construction:
\begin{theorem}[Corollary 1.5 in \cite{MendesRadeschi2019}]\th\label{T:  holonomy map of curve}
Let $L$ be a closed leaf of $(M,\fol,g)$ a singular Riemannian foliation, and let $\gamma\colon [0,1]\to L$ a piecewise smooth curve with $\gamma(0) = p$. Then there exists a map $G\colon [0,1]\times \nu_p(M,L)\to \nu(L)$ such that:
\begin{enumerate}[(i)]
\item $G(t,v) \in \nu_{\gamma(t)}(M,L)$ for every $(t,v)\in [0,1]\times \nu_p(M,L)$.
\item For every $t\in [0,1]$, the restriction $G_t\colon \{t\}\times \nu_p (M,L)\to \nu_{\gamma(t)}(M,L)$ is a linear map; moreover with respect to the infinitesimal foliations the map 
\[
G_t\colon (\nu_p(M,L),\fol_p,g_p^\perp)\to (\nu_{\gamma(t)}(M,L),\fol_{\gamma(t)},g_{\gamma(t)}^\perp)
\]
is a foliated isometry. 
\end{enumerate}
\end{theorem}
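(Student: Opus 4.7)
The plan is to build $G_t$ locally from the plaque charts of Proposition~\ref{P: Plaque charts}, concatenate finitely many local pieces by compactness of $[0,1]$, and then identify the resulting assignment with parallel transport in a natural metric connection on $\nu(L)\to L$ in order to confirm linearity and the isometry property.

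First I would reduce to $\gamma$ smooth and contained in a single plaque chart $(W,\phi)$ with $\phi(W)=U\times V$ and $\phi(W\cap L)=U\times\{0\}$. Property~(5) of Proposition~\ref{P: Plaque charts} says that for each $u_0\in U$ the curves $t\mapsto \phi^{-1}(u_0,tv)$ are geodesics perpendicular to $L$, so for $q\in W\cap L$ with $\phi(q)=(u_q,0)$ the composition $\Phi_q\colon V\to S_q\subset M$, $v\mapsto \phi^{-1}(u_q,v)$, agrees with $\expo_q$ on the image of a small ball $\nu_q^\varepsilon(M,L)\to S_q$. This gives, for each pair $p,q\in W\cap L$, a local ``sliding map'' $G^W_{p,q}\colon \nu^\varepsilon_p(M,L)\to\nu^\varepsilon_q(M,L)$ characterised by $\phi(\expo_q(G^W_{p,q}(v)))=(u_q,\mathrm{proj}_2(\phi(\expo_p(v))))$. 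I extend it to all of $\nu_p(M,L)$ by the homothety invariance of the infinitesimal foliation $\fol_p$ established in \cite[Lemma 6.2]{Molino}, which guarantees a well-defined extension commuting with $h_\lambda$.

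Next I would verify the three properties. Condition~(i) is immediate from the construction. The foliated property of~(ii) follows from property~(4) of Proposition~\ref{P: Plaque charts}: each slab $U\times\{v_0\}$ is contained in a single leaf of $\fol$, so sliding $u_p$ to $u_q$ keeps $\expo_p(v)$ and $\expo_q(G^W_{p,q}(v))$ in the same leaf of $\fol$; translating this through $\expo^{-1}$ and passing to the infinitesimal foliations, $G^W_{p,q}$ sends leaves of $\fol_p$ to leaves of $\fol_q$. The linearity and isometric properties are the delicate part: they are obtained by recognising $G^W_{p,q}$ as the parallel transport along any curve in $W\cap L$ from $p$ to $q$ for the Bott-type metric connection on $\nu(L)$ induced by the Levi-Civita connection of $g$. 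The bundle-like condition (locally equidistant leaves) implies this connection is metric on the normal bundle, hence parallel transport is a linear isometry; the geodesic property~(5) ensures that its exponential coincides infinitesimally with the slicing $\Phi_q$, so parallel transport agrees with the sliding map. Finally I glue: by compactness choose $0=t_0<\cdots<t_N=1$ and plaque charts $W_i\supset\gamma([t_{i-1},t_i])$, set $G_t:=G^{W_i}_{\gamma(t_{i-1}),\gamma(t)}\circ G_{t_{i-1}}$ on $[t_{i-1},t_i]$, and concatenate at corners for piecewise-smooth $\gamma$.

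The main obstacle is the linearity-plus-isometry step: the naive sliding construction only yields a homothety-equivariant foliated diffeomorphism between small disks, and promoting this to a global linear isometry requires identifying it with parallel transport of a connection that simultaneously preserves the fibrewise inner product \emph{and} the (possibly singular) infinitesimal foliation structure. Closing this circle uses in an essential way the bundle-like property of $g$ together with the tubular neighbourhood theorem for closed leaves of a singular Riemannian foliation from \cite{MendesRadeschi2019}; once this existence-and-compatibility is in place, all other steps are routine verifications within one plaque chart followed by concatenation.
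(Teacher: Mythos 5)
This theorem is cited in the paper as Corollary 1.5 of \cite{MendesRadeschi2019} and is \emph{not} proved there, so there is no in-paper argument to compare against. Reviewing your sketch on its own merits, however, reveals a genuine gap precisely at the step you flag as ``delicate.''

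The construction of the local sliding map $G^W_{p,q}$ from a plaque chart is fine: property~(4) of Proposition~\ref{P: Plaque charts} makes it manifestly foliation-preserving, and homothety invariance (via \cite[Lemma 6.2]{Molino}) extends it radially. Condition~(i) and the foliated part of~(ii) are therefore under control. What is not under control is your claim that this sliding map \emph{equals} parallel transport for the normal connection on $\nu(L)$, and hence that linearity and the isometry property are ``routine verifications within one plaque chart.'' These are two a priori different constructions, each of which has one of the two required properties but not obviously the other: parallel transport of the normal connection is a linear isometry by general principles, but there is no reason it should carry $\fol_p$ to $\fol_{\gamma(t)}$; conversely, the plaque-chart sliding map preserves the infinitesimal foliation by construction and (since leaves of a closed singular Riemannian foliation are equidistant and the geodesics $t\mapsto\phi^{-1}(u_0,tv_0)$ are normal) preserves norms, but norm-preservation plus homothety-equivariance does not give linearity. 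Showing that a single map can simultaneously be linear, isometric, and foliation-preserving is exactly the nontrivial content of Corollary~1.5 in \cite{MendesRadeschi2019}, and rests on the equifocality and linearization results of that paper (a singular-foliation analogue of the slice/tubular-neighborhood theorem), not on the mere existence of plaque charts. Your final paragraph in fact concedes this by appealing to ``the tubular neighbourhood theorem for closed leaves from \cite{MendesRadeschi2019}'' to close the circle — so the argument as written is not a proof but a reduction of the cited statement to another statement in the same source, and the central analytic step (that sliding is a linear isometry) remains unestablished.
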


Let $\Or(\nu_p(M,L_p),\fol_p)$ denote the group of \emph{foliated isometries} of the infinitesimal foliation. That is, a map $f\in \Or(\nu_p(M,L_p),\fol_p)$ is an isometry $f\colon (\nu_p(M,L_p),g_p^\perp)\to (\nu_p(M,L_p),g_p^\perp)$ such that for any leaf $\mathcal{L}\in \fol_p$ we have $f(\mathcal{L})\in \fol_p$. We denote by $\Or(\fol_p)\subset\Or(\nu_p^\perp(M,L_p),\fol_p)$ the subgroup of foliated isometries that leave the infinitesimal foliation invariant, i.e. $f\in \Or(\fol_p)$ if and only if $f(\mathcal{L})\subset \mathcal{L}$ for all $\mathcal{L}\in \fol_p$. Observe that for a closed piecewise loop $\gamma\colon [0,1]\to L_p$ with start point $p$ we have by \th\ref{T:  holonomy map of curve} a map $G_{\gamma}\colon \nu_p(M,L_p)\to \nu_p(M,L_p)$ with $G_{\gamma}\in \Or(\nu_p(M,L_p),\fol_p)$. Moreover, from \cite[Proposition 2.5]{Corro2023} it follows that the coset $G_{\gamma}\Or(\fol_p)\in \Or(\nu_p(M,L_p),\fol_p)/\Or(\fol_p)$ depends only on the homotopy class of $\gamma$. Thus, we have a well defined map $\rho\colon \pi_1(L_p,p)\to \Or(\nu_p(M,L_p),\fol_p)/\Or(\fol_p)$.

We define the \emph{holonomy of the leaf $L_p$ at $p$} as the image 
\[
\Hol(L_p) = \rho(\pi_1(L_p,p))\subset \Or(\nu_p(M,L_p),\fol_p)/\Or(\fol_p).
\]
Observe that if $q\in L_p$ is another point in $L_p$ and we fix $\alpha\colon I\to L_p$ a path from $q$ to $p$, then we have that $G_{\alpha}\Or(\nu_q(M,L_p),\fol_q)G_{\alpha}^{-1} = \Or(\nu_p(M,L_p),\fol_p)$ and $G_{\alpha}\Or(\fol_q)G_{\alpha}^{-1} = \Or(\fol_p)$. Thus, the holonomy is defined up to conjugation in the group of isometries $\Or(\nu_p(M,L_p))$ of the normal space to the leaf.

A leaf $L_p$ is called \emph{principal} if and only if it is regular and has trivial holonomy. Given a closed singular Riemannian foliation $(M,\fol,g)$, the collection of all principal leaves is called the \emph{principal stratum of $\fol$} and denoted by $M_{\prin}$. We point out that $M_{\prin}\subset M$ is an open and dense subset by \cite[Proposition 2.8]{Corro2023}.

\subsection{\texorpdfstring{$A$}{A}- and \texorpdfstring{$B$}{B}-foliations}

We now consider closed singular Riemannian foliations whose leaves are aspherical manifolds, i.e. such that for $L\in\fol$ we have $\pi_k(L) =0$, for $k\neq 1$. These foliations are called \emph{$A$-foliations}, and where introduced in \cite{GalazGarciaRadeschi2015} as a generalization of torus actions by isometries. The authors showed that on a compact simply-connected manifold $M$, the regular leaves of an $A$-foliation are homeomorphic to tori.

\begin{theorem}[Theorem B in \cite{GalazGarciaRadeschi2015}]\th\label{T: regular leaves are tori}
Let $(M,\fol,g)$ be an $A$-foliation on a compact simply-connected manifold, and let $L\in \fol$ be a regular leaf. Then $L$ is homeomorphic to a torus. 
\end{theorem}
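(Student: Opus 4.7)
The plan is to reduce the theorem to showing $\pi_1(L)\cong\Z^n$ with $n=\dim L$, and then to invoke topological rigidity of the torus. Indeed, if $\pi_1(L)\cong\Z^n$ then $L$ is a closed aspherical $n$-manifold with fundamental group $\Z^n$, so $L$ is homotopy equivalent to $T^n$; by Hsiang--Wall topological rigidity (for $n\ge 5$), surface classification ($n\le 2$), Waldhausen's theorem on Haken manifolds ($n=3$), and the topological rigidity of aspherical closed $4$-manifolds with $\pi_1\cong\Z^4$, one concludes $L\cong T^n$.

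The task therefore reduces to showing that $\pi_1(L)$ is torsion-free and abelian. Torsion-freeness is immediate: since $L$ is closed aspherical, its universal cover $\tilde L$ is contractible and finite-dimensional, and Smith theory rules out a free action of a nontrivial finite group on such a space. For abelianness I would apply Molino's structure theorem to $\fol|_{M_\reg}$, a regular Riemannian foliation on the connected open-dense subset $M_\reg\subset M$. On the orthonormal frame bundle $P\to M_\reg$ of the normal bundle, the lifted foliation $\hat\fol$ is transversally parallelizable, so its leaf closures form the fibers of a smooth fibration, each homogeneous under the structural Lie algebra $\mathfrak g$. Using the codimension-at-least-two property of the singular strata from \th\ref{SS: Singular Riemannian foliations} one has $\pi_1(M_\reg)=\pi_1(M)=1$; combining this control with the asphericity of the leaves of $\fol$ forces $\mathfrak g$ to be abelian, since a nonabelian compact semisimple factor in the integrating Lie group would generate nontrivial $\pi_2$ and destroy asphericity of the leaf, while a nonabelian solvable factor would obstruct the simply-connectedness of the total space. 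The projection $P\to M_\reg$ then exhibits $\pi_1(L)$ as an extension of a finitely generated abelian group by a finite group (coming from holonomy and the $\Or(q)$-structure), and torsion-freeness collapses the extension, yielding $\pi_1(L)\cong\Z^n$.

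The principal obstacle I expect is the abelianness step, specifically extracting from the Molino decomposition on $P$ the conclusion that the structural Lie algebra must be abelian. This requires combining carefully (i) the asphericity of the leaves of $\fol$, (ii) the simple-connectedness of $M$ (propagated to $\pi_1(M_\reg)=1$ via the codimension bound on singular strata), and (iii) the homogeneous structure of the leaf closures from Molino theory, in order to eliminate any nonabelian integrating Lie group. A secondary subtlety is justifying that the information gathered on $M_\reg$, where Molino theory directly applies, suffices to pin down the topology of the regular leaf $L$ even though $\fol$ is singular on the complement; here the density and openness of the principal stratum $M_\prin$ and the diffeomorphism type constancy of regular leaves along $M_\reg$ are the key inputs.
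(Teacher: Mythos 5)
The paper does not prove this statement at all: it is quoted verbatim as Theorem~B of \cite{GalazGarciaRadeschi2015}, so the only fair comparison is with that cited argument. Your endgame is the correct one and matches how the cited proof concludes: torsion-freeness of $\pi_1(L)$ follows from asphericity, and once $\pi_1(L)\cong\Z^n$ is known, homotopy equivalence to $T^n$ plus topological rigidity (surfaces, Waldhausen, Freedman--Quinn, Hsiang--Wall) gives the homeomorphism. The problem is the middle step, abelianness, which is exactly where the content of the theorem sits, and your route to it does not work.

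Three concrete gaps. First, $\pi_1(M_\reg)=\pi_1(M)$ is not a consequence of the singular strata having codimension at least $2$: codimension $\geq 2$ only gives a surjection $\pi_1(M_\reg)\to\pi_1(M)$, which is vacuous when $\pi_1(M)=1$, and codimension-$2$ strata do occur (already for a rotation of $S^2$ the regular part has $\pi_1=\Z$). Second, Molino's structural Lie algebra is the wrong tool: the foliation is closed, so on $M_\reg$ all leaves are compact with finite holonomy, the leaf closures coincide with the leaves, and the structural algebra is trivial --- it records the closure dynamics of non-closed leaves and carries no information about $\pi_1(L)$; the asserted dichotomy (``a semisimple factor would create $\pi_2$, a solvable factor would obstruct simple connectivity'') is not an argument. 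Third, and fatally, the last step begs the question: the extension you actually obtain from holonomy is $1\to\pi_1(\widehat L)\to\pi_1(L)\to H\to 1$ with $H$ finite and $\pi_1(\widehat L)$ (at best) abelian, and torsion-freeness does \emph{not} collapse such an extension --- torsion-free virtually abelian groups are precisely the Bieberbach groups, e.g.\ the Klein bottle group, and the paper itself recalls (after \cite{Corro2023}) that leaves with nontrivial holonomy of $A$-foliations on simply-connected manifolds can genuinely be non-toral Bieberbach manifolds, just not regular ones. Ruling this out for regular leaves requires a real use of the simple connectivity of $M$ acting on $\pi_1(L)$ (in the cited proof this enters through homotopy-theoretic control of the quotient over the regular part, not through Molino theory), so as it stands your sketch assumes the key point rather than proving it.
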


\begin{rmk}
We recall that for $n\geq 5$ by \cite{HsiangShaenson1969}, there  exists $n$-manifolds which are homeomorphic to the $n$-torus, but not diffeomorphic. These manifolds are known as \emph{exotic tori}. We recall that there are examples of $A$-foliations $(M,\fol,g)$ on compact manifolds whose leaves are exotic tori, but $M$ is not simply connected (see \cite{FarrellWu2019}).
\end{rmk}

In \cite{GalazGarciaRadeschi2015} the authors  also introduced the concept of $B$-foliations, which are $A$-foliations whose leaves are homeomorphic to Bieberbach manifolds. Recall that an $n$-dimensional \emph{Bieberbach manifold} $L$ is an aspherical manifold such that $\pi_1(L)$ is isomorphic to  a discrete subgroup $G$ of  $\R^n\rtimes  \mathrm{O}(n)\subset \mathrm{Aff}(n)$ which is torsion free, and such that $\R^n/G$ is compact. Such a group $G$ is called a \emph{Bieberbach group}. We recall that Bieberbach manifolds are those homeomorphic to  smooth manifolds that admit a flat Riemannian metric, i.e. a Riemannian metric whose sectional curvature is equal to $0$ for any $2$-plane in the tangent space at any point. By the following result, we conclude that the leaves of $A$-foliations on compact simply-connected manifolds have fundamental group isomorphic to a Bieberbach group, and except for leaves of dimension $4$, they are homeomorphic to Bieberbach manifolds. 

\begin{theorem}[Corollary 3.2 and Proposition 3.3 in \cite{Corro2023}]
Let $(M,\fol,g)$ be an $A$-foliation on a compact simply-connected manifold. Then the leaves with trivial holonomy are homeomorphic to tori. Moreover, for any leaf $L\in \fol$ with non-trivial holonomy the fundamental group $\pi_1(L,p)$, at $p\in L$, is a Bieberbach group,  and for $\dim(L)\neq 4$ the leaf $L$ is homeomorphic to a Bieberbach manifold. 
\end{theorem}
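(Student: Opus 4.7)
The plan is to compare a given leaf $L=L_p$ with a nearby principal leaf $L'$, which is a torus $T^n$ by \th\ref{T: regular leaves are tori}, and to read off the structure of $\pi_1(L_p)$ from the induced extension. First I would invoke the slice theorem for closed singular Riemannian foliations (see e.g.\ \cite{MendesRadeschi2019}) together with the plaque chart of \th\ref{P: Plaque charts} to identify a foliated tubular neighborhood as a twisted product
\[
\Tub(L_p)\;\cong\;\tilde{L}_p\times_{\pi_1(L_p)}\nu_p(M,L_p),
\]
in which $\pi_1(L_p)$ acts on the universal cover $\tilde{L}_p$ by deck transformations and on the normal slice $(\nu_p(M,L_p),\fol_p)$ through the holonomy homomorphism $\rho$ of the preceding subsection. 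Since $M_{\prin}$ is open and dense, I can pick $v\in\nu_p(M,L_p)$ inside a principal leaf $\mathcal{L}_v$ of $\fol_p$, so that the leaf $L'=L_{\expo_p(v)}$ is principal and, under the above identification, fibers over $L_p$ with fiber the $\Hol(L_p)$-orbit of $\mathcal{L}_v$.

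The long exact sequence of homotopy groups attached to this fibration, combined with the asphericity of $L_p$ (which kills $\pi_2$), produces an extension
\[
1\longrightarrow A\longrightarrow \pi_1(L_p)\longrightarrow F\longrightarrow 1,
\]
where $A\cong\mathbb{Z}^d$ (with $d=\dim L_p$, arising as the quotient of $\pi_1(L')\cong\mathbb{Z}^n$ by the fiber subgroup $\pi_1(\mathcal{L}_v)\cong\mathbb{Z}^{n-d}$, using that principal leaves of the infinitesimal foliation of an $A$-foliation are again tori) and $F$ is a finite quotient of $\Hol(L_p)$. Hence $\pi_1(L_p)$ is virtually $\mathbb{Z}^d$. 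If $\Hol(L_p)$ is trivial then so is $F$ and $\pi_1(L_p)\cong\mathbb{Z}^d$; since $L_p$ is a closed aspherical $d$-manifold, topological rigidity of tori yields $L_p\cong T^d$, settling the first assertion.

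Assuming now that $\Hol(L_p)$ is nontrivial, asphericity of $L_p$ forces $\pi_1(L_p)$ to be torsion-free, and a torsion-free virtually free-abelian group is a Bieberbach group by the Bieberbach--Auslander theorems: the conjugation action of $F$ on $A$ is faithful (otherwise a nontrivial torsion element would appear in $\pi_1(L_p)$) and lands in $\mathrm{GL}(d,\mathbb{Z})$, and averaging an Euclidean inner product on $\mathbb{R}^d$ conjugates its image into $\mathrm{O}(d)$, producing a proper cocompact action of $\pi_1(L_p)$ on $\mathbb{R}^d$ by affine isometries. For $d\neq 4$, the Borel conjecture for Bieberbach groups (Farrell--Hsiang in low dimensions, Farrell--Jones for $d\geq 5$) then promotes the homotopy equivalence $L_p\simeq\mathbb{R}^d/\pi_1(L_p)$ to a homeomorphism; dimension $4$ is excluded because the corresponding topological surgery is not available for these fundamental groups. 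The step I expect to be the main obstacle is the careful identification of the connecting maps in the exact sequence with the holonomy homomorphism $\rho$, together with the verification that $\pi_1(\mathcal{L}_v)$ sits inside $\pi_1(L')$ as a primitive sublattice of the expected rank; both rely on the fine structure of the slice theorem and the local foliation picture provided by \th\ref{P: Plaque charts}.
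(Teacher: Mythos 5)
This statement is quoted from \cite{Corro2023} (Corollary~3.2 and Proposition~3.3) and is not proved in the present paper, so there is no in-document argument for you to match. That said, your strategy---compare $L_p$ with a nearby principal leaf $L'$ via the footpoint fibration $L'\to L_p$ and extract $\pi_1(L_p)$ as a finite extension of a lattice---is the mechanism behind the cited result and behind \th\ref{T: regular leaves are tori}, so the overall route is the right one.

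There is one step that is genuinely backwards, and two that you flag as obstacles but which resolve themselves. The backwards step is the assertion that ``principal leaves of the infinitesimal foliation of an $A$-foliation are again tori'': you cannot quote \th\ref{T: regular leaves are tori} for this, since it needs a compact simply-connected ambient manifold, and $\nu_p(M,L_p)$ is a non-compact vector space (nor is the restriction of $\fol_p$ to the unit normal sphere a priori an $A$-foliation, since asphericity of its leaves is precisely what is in question). The correct order of reasoning is the reverse: from the Ehresmann fibration $L'\to L_p$ with compact base, total space and fiber, together with asphericity of $L_p$, one reads off that $\mathcal{L}_v$ is a closed aspherical manifold with $\pi_1(\mathcal{L}_v)\hookrightarrow\pi_1(L')\cong\Z^n$, hence free abelian, of rank $n-d$ by cohomological dimension; that $\mathcal{L}_v$ is a torus comes out at the end, not in at the beginning. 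As for the two points you worry about: primitiveness of $\pi_1(\mathcal{L}_v)\subset\Z^n$ is automatic because the quotient injects into the torsion-free group $\pi_1(L_p)$, and you do not actually need a precise identification of the connecting maps with $\rho$---all that is used is that $L'\cap S_p$ has finitely many components (immediate from compactness), giving a free abelian subgroup of finite index in $\pi_1(L_p)$. Note also that this subgroup need not be normal, so the extension $1\to A\to\pi_1(L_p)\to F\to 1$ with faithful $F$-action is not literally produced by your long exact sequence; but the algebraic characterization of Bieberbach groups only needs ``finitely generated, torsion-free, contains $\Z^d$ of finite index,'' after which one passes to a maximal normal lattice. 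With those repairs the argument is sound, including the exclusion of dimension~$4$ for the final homeomorphism statement.
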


This implies that, except for the leaves of dimension $4$, from a topological viewpoint $A$-foliations and the $B$-foliations as defined in \cite{GalazGarciaRadeschi2015} are indistinguishable on compact simply-connected manifolds.  Thus in \cite[Remark 3.4]{Corro2023} the author proposed to define \emph{$B$-foliations} as those $A$-foliations $(M,\fol,g)$ such that for any $L\in \fol$ we have that $(L,g|_{L})$ is flat. This definition immediately introduces some rigidity to the diffeomorphism type of the foliation: Combining \th\ref{T: regular leaves are tori} with  the work of Bieberbach \cite{Bieberbach} (and more generally by  \cite[Theorem 3]{FarrellJonesOntaneda2007}) we obtain that for simply-connected manifolds the regular leaves of an $A$-foliation $(M,\fol,g)$ such that $(L,g|_{L})$ is flat, i.e. $B$-foliation as defined in this paragraph, are diffeomorphic to the standard torus.

\begin{rmk}
We point out that a singular Riemannian foliation $(M,\fol,g)$ induced by an effective smooth torus action $\mu\colon T^k\times M\to M$ by isometries is a $B$-foliation, in the sense that the orbits with the induced Riemannian metric are flat. We sketch briefly the proof of this fact.  Fix $p\in M$ such that $T^{k}(p)$ is a principal orbit. Then for the isotropy group at $p$ we have $T^k_p=\{e\}$, where $e\in T^k$ is the identity element (see \cite[Example 3.4.3]{ZillerNotes}). This implies that the orbit $T^k(p)$ is equivariantly diffeomorphic to $T^k$ via the map $\mu_p\colon T^k\to T^k(p)$ given by $\mu_p(\xi)= \mu(\xi,p)$. Via this diffeomorphism the $T^k$-invariant induced metric $g|_{T^k(p)}$ induces a left-invariant metric $\bar{g}_p$ on $T^k$. Since $T^k$ is abelian, by \cite[Corollary 1.3]{Milnor1976} we have that the sectional curvature of $\bar{g}_p$ is non-negative. If it is positive at some point (and thus positive anywhere), then by the Bonnet-Myers Theorem (\cite[Chapter 9, 3.1 Theorem]{doCarmo}) we have that $T^k$ has finite fundamental group, which is a contradiction. Thus, $\bar{g}_p$ is flat, and consequently $(T^k(p),g|_{T^k(p)})$ is a flat manifold. Consider now $q\in M$ not in a principal orbit. We point out that a connected component $(T^k_q)^0$ of the identity element of the isotropy subgroup at $q$ is homeomorphic to a subtorus of $T^k$. Then there exists a finite cover $\overline{T^k(q)}$ of $T^k(q)$ which is homeomorphic to a torus of dimension equal to $k-\dim((T^k_q)^0)$ (see \citep[Example 2.4]{GalazGarciaRadeschi2015}). The metric $g|_{T^k(q)}$ lifts to a $T^k/(T^k_q)^0$-invariant metric $\tilde{g}_q$ on $\overline{T^k(q)}$. Observing that the metric $\tilde{g}_q$ induces a left-invariant metric on the torus $T^k/(T^k_q)^0$, then by applying the same reasoning as for the principal orbit case we conclude that the metric $\tilde{g}_q$ is flat. Since $\overline{T^k(q)}$ is a finite cover we conclude that $g|_{T^k(q)}$ is flat.
\end{rmk}

\subsection{Compact flat metric spaces}\label{S: Flat}

For the sake of completeness, we give a short review of the geometry of flat manifolds. We consider $(M,g)$ to be a compact smooth Riemannian $m$-dimensional manifold, such that at every point $p\in M$ and any $2$-dimensional plane in $T_pM$ we have the sectional curvature equal to $0$, i.e. a flat manifold. By the Cartan-Hadamard theorem (see \cite[Theorem 3.1]{doCarmo}) it follows that the universal cover $\widetilde{M}$ of $M$ is diffeomorphic to $\R^m$. We consider $\widetilde{g}= \pi^\ast(g)$ the pull back metric of $g$ under the covering map $\pi\colon \widetilde{M}\to M$. Observe that by construction $\pi\colon (\widetilde{M},\widetilde{g})\to (M,g)$ is a local isometry, and thus $(\widetilde{M},\widetilde{g})$ is a flat manifold diffeomorphic to $\R^m$. By \cite[Theorem 4.1]{doCarmo} it follows that $(\widetilde{M},\widetilde{g})$ is isometric to $(\R^m,\sigma)$ where $\sigma$ denotes the Euclidean Riemannian metric. Thus we have that there exists a  covering map $\bar{\pi}\colon (\R^m,\sigma)\to (M,g)$ which is a local isometry, i.e. $\bar{\pi}^\ast(g) = \sigma$. By letting $B$ be the group of deck transformations of $\bar{\pi}$, it follows that $B$ is a subgroup of the isometry group $\Iso(\R^m,\sigma)$ which is torsion free, i.e. $B$ is a Bieberbach group. Thus $(M,g)$ is isometric to the homogeneous Riemannian space $(\R^m/B,\bar{\sigma})$,  where $\bar{\sigma}$ is the Riemannian metric induced by the Riemannian submersion $p\colon \R^m\to \R^m/B$ (see \cite[Theorem 1.2.1]{GromollWalschap}).

\subsection{Alexandrov spaces}\label{SS: Alexandrov geometry}
We now recall the definition of an Alexandrov space. Let $(X,d_X)$ be a locally compact and locally complete inner metric space. A point $x\in X$ is said to have \emph{curvature at least $\kappa$}, for some $\kappa\in \R$, if there exist $U_x\subset X$ an open neighborhood of $x$ such that the following holds:
Given any geodesic triangle $\Delta\subset U_x$ there exists a geodesic triangle $\widetilde{\triangle}$ in  the $2$-dimensional surface of constant sectional curvature $\kappa$ denoted by $S^2_{\kappa}$, with edges having the same lengths as the edges of $\triangle$, and such that given $y\in \triangle$ a vertex and $w$ any point in the opposing edge to $y$ in $\triangle$, for  $\tilde{y}$ and $\tilde{w}$ the unique corresponding points in $\widetilde{\triangle}$ to $y$ and $w$ respectively, then we have
\[
d(y,w) \geq d_{S^2_{\kappa}}(\tilde{y},\tilde{w}).
\]
A \emph{local Alexandrov space} $(X,d_X)$ is a locally compact and locally complete inner metric space such that for each point $x\in X$ there exist $\kappa_x\in\R$ so that $x$ has curvature at least $\kappa_x$.

In the case when there exists $\kappa\in \R$ such that any point of $X$ is a point with curvature at least $\kappa$, we say that $(X,d_X)$ is an \emph{Alexandrov space of curvature at least $\kappa$}, and we denote it by $\curv(X)\geq \kappa$. We denote the collection of all Alexandrov spaces with curvature at least $\kappa\in \R$ by $\Alex(\kappa)$.

With respect to the Gromov-Hausdorff topology (see Section~\ref{S: equivariant convergence}) we have the following stability result for Alexandrov spaces:

\begin{theorem}[Perelman's stability Theorem in \cite{Perelman91}, \cite{Kapvitch2007}]\th\label{T: Perelmans stability}
Let $(X,d_X)$ be a compact $n$-dimensional Alexandrov space of $\curv(X) \geq \kappa$. Then there exists an $\epsilon = \epsilon(X) > 0$ such that for any $n$-dimensional Alexandrov space $(Y,d_Y)$ of $\curv(Y)\geq \kappa$ with $d_{GH}(X, Y) < \epsilon$, $Y$ is homeomorphic to $X$.
\end{theorem}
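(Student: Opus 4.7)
The plan is to exploit the rigid local structure of finite-dimensional Alexandrov spaces that is encoded by distance functions, and to transport this structure across Gromov--Hausdorff approximations. Recall from Burago--Gromov--Perelman that on an $n$-dimensional space $(X,d_X)\in\Alex(\kappa)$, every point $x$ admits an \emph{$n$-strainer}, i.e.\ pairs $(a_i,b_i)_{i=1}^n$ with comparison angles $\cang a_i x b_i$ close to $\pi$ and $\cang a_i x a_j$ close to $\pi/2$. At such an $x$ the map $f_x\colon y\mapsto(d_X(a_1,y),\ldots,d_X(a_n,y))$ is a bi-Lipschitz open embedding of a neighborhood of $x$ into $\R^n$, with constants depending only on the strainer quality.

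First I would use compactness of $X$ to fix a finite open cover $\{U_\alpha\}_{\alpha=1}^N$ of $X$, together with a finite set $S\subset X$ of basepoints, so that on each $U_\alpha$ there is an $n$-strainer $(a_i^\alpha,b_i^\alpha)\subset S$ satisfying the strainer inequalities with a uniform quantitative margin $\delta>0$, and such that the induced map $f_\alpha\colon U_\alpha\to\R^n$ is a bi-Lipschitz embedding with uniform constants. For any $(Y,d_Y)\in\Alex(\kappa)$ of dimension $n$ with $d_{GH}(X,Y)<\epsilon$, I would fix an $\epsilon$-Hausdorff approximation $\phi\colon X\to Y$, transport the basepoints via $S':=\phi(S)\subset Y$, and use the stability of comparison angles under GH-convergence of Alexandrov spaces with a common lower curvature bound (this is where $\curv(Y)\geq\kappa$ is crucial). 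Then for $\epsilon$ small enough depending on $\delta$, $N$ and the cover, the transported pairs are again $n$-strainers on a neighborhood $V_\alpha\subset Y$ covering $\phi(U_\alpha)$, and the analogous map $f_\alpha'\colon V_\alpha\to\R^n$ is a bi-Lipschitz embedding with essentially the same constants.

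Next I would compose these to form candidate local homeomorphisms $h_\alpha := f_\alpha^{-1}\circ f_\alpha'\colon V_\alpha\to U_\alpha$. The main obstacle is that these local candidates do not agree on overlaps $V_\alpha\cap V_\beta$, so they cannot be glued directly into a continuous global map. This is where Perelman's controlled gluing argument enters, a quantitative Alexandrov-geometric refinement of Siebenmann's deformation theorem for mapping cylinder neighborhoods. The engine is Perelman's gradient flow of the semiconcave distance functions $d_X(a_i^\alpha,\cdot)$, which provides canonical isotopies that modify the $h_\alpha$ so as to agree on overlaps while remaining supported in small balls. The argument proceeds by induction on $n$: assuming the stability theorem in dimensions strictly less than $n$ and applying it to the spaces of directions $\Sigma_x\in\Alex(1)$ of points in the overlap regions (which have dimension $n-1$), one obtains the local homeomorphisms of links that seed the controlled isotopies.

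The hard part is orchestrating this inductive gluing so that the successive local modifications converge to a single global homeomorphism $h\colon Y\to X$, and tracking how the threshold $\epsilon=\epsilon(X)$ depends on the combinatorics of the cover and on the dimension-induction hypothesis. One must simultaneously ensure that every cover element remains admissible for the perturbed space $Y$, that the strainer margins $\delta$ survive transport, that the bi-Lipschitz constants of $f_\alpha$ and $f_\alpha'$ remain uniform, and that all inductive hypotheses on the $(n-1)$-dimensional links persist. The compactness of $X$ together with the uniformity of the strainer structure across the finite cover is precisely what forces $\epsilon$ to depend on $X$ rather than being universal.
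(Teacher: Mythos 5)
First, note that the paper does not prove this statement at all: it is quoted as an external result, with the proof deferred to Perelman's preprint and to Kapovitch's detailed account, so the only fair comparison is between your sketch and that cited proof. Measured against it, your proposal is an outline of the known strategy rather than a proof, and it contains one genuine error and one genuine gap. The error is the opening claim that \emph{every} point of an $n$-dimensional space in $\Alex(\kappa)$ admits an $n$-strainer. This is false: the set of $(n,\delta)$-strained points is open and dense, but topologically singular points need not be strained. For instance, the apex of the Euclidean cone over a round $\R P^2$ (a $3$-dimensional Alexandrov space of $\curv\geq 0$) admits no strainer pair with comparison angle $\cang$ close to $\pi$, and its space of directions is not a sphere; similarly the apex of a thin $2$-cone is not strained. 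Consequently your finite atlas of bi-Lipschitz distance-coordinate charts $f_\alpha\colon U_\alpha\to\R^n$ simply does not exist on all of $X$, and the points it misses are exactly the metrically and topologically singular ones where stability is hardest. This is precisely why Perelman's proof is not a chart-gluing argument in distance coordinates but is formulated for admissible maps on MCS-spaces, with a relative, parametrized local stability statement proved by reverse induction on the number of coordinate functions.

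The gap is that the step you label ``Perelman's controlled gluing argument'' is not an argument you supply; it is the theorem's entire content. Composing $h_\alpha=f_\alpha^{-1}\circ f_\alpha'$ on the strained part and then invoking canonical isotopies from gradient flows plus Siebenmann's deformation theory to make the $h_\alpha$ agree on overlaps, ``seeded'' by the inductive hypothesis on spaces of directions, is a description of what must be done, not a proof that it can be done: you give no mechanism for producing local homeomorphisms near unstrained points (where the conic neighborhood theorem and the induction on $\Sigma_x$ must be used in an essential, not auxiliary, way), no statement of the relative/fibered local stability result that makes the induction close, and no control ensuring the successive modifications are supported where needed and compose to a globally defined homeomorphism $h\colon Y\to X$. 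As written, the proposal would be acceptable as a road map pointing to Perelman's and Kapovitch's proofs --- which is exactly how the paper treats the theorem --- but it is not a self-contained proof, and its universal-strainer premise must be corrected before even the regular part of the argument goes through.
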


An extremal subset $E$ of an Alexandrov space $(X,d_X)$ is a closed subset subset which is preserved under the gradient flow of $d_X(p, \cdot)$ for all $p \in X$ (see \cite{HarveySearle2021}).

\begin{ex}\th\label{Ex: Orbit type stratification} Let $H$ act effectively smoothly by isometries on a compact Riemannian manifold $(M,g)$, consider $\pi\colon M\to M/H$ and equip $M/H$ with the induced metric $d^\ast$. Then $(M/H,d^\ast)$ is an Alexandrov space. Let $\Gamma\subset H$ be a closed subgroup, and $F_\Gamma$ be the set of points fixed by $\Gamma$. Then by \cite[4.2 Proposition]{PerelmanPetrunin1994} $\pi(F_\Gamma)$ is an extremal subset of $(M/H,d^\ast)$.
\end{ex}

\begin{rmk}\th\label{R: Perelman stability preserves stratification}
The homeomorphism in \th\ref{T: Perelmans stability} can be taken so that it preserves the stratification of $X$ and $Y$ by extremal subsets (see \cite{Kapvitch2007},\cite[Theorem 4.3]{HarveySearle2017}).
\end{rmk}

\subsection{Equivariant convergence}\label{S: equivariant convergence}

We denote by $\mathcal{M}$ the set of all isometry classes of pointed metric spaces $(X,d_X,x)$ such that for each $r>0$ the open ball $B_r(x)$ is relatively compact. We define the following notion of convergence on $\mathcal{M}$: 

We begin by considering first $(X,d_X)$ and $(Y,d_Y)$ metric spaces. We define the \emph{Gromov-Hausdorff distance} between $(X,d_X)$ and $(Y,d_Y)$, denoted by $d_{\mathrm{GH}}((X,d_X),(Y,d_Y))$, to be the infimum  of all Hausdorff distances $d_H(f(X),g(Y))$ for all metric spaces $(Z,d_Z)$  and all isometric embeddings $f\colon X\to Z$ and $g\colon Y\to Z$ (see  \cite[Section 7.3]{BuragoBuragoIvanov}).

We say that a a pointed sequence $\{(X_i,d_{X_i},x_i)\}_{i\in\N}\subset \mathcal{M}$ converges to $(Y,d_Y,y)\in \mathcal{M}$ if for each $r>0$ and each $\varepsilon>0$ there exists $N\in \N$, such that for $n\geq N$ there exists a map $f_n\colon B_r(x_n)\to X$ for which the following hold:
\begin{enumerate}[(i)]
\item $f(x_n) = y$,
\item the $\varepsilon$-neighborhood of $f_n(B_r(x_n))\subset Y$ contains the ball $B_{r-\varepsilon}(y)$,
\item $\mathrm{dis} f_n = \sup\{|d_Y(f_n(z_1),f_n(z_2))-d_{X_n}(z_1,z_2)|\mid z_1,z_2\in B_{r}(x_n)\}<\varepsilon$.
\end{enumerate}
We call this notion of convergence on $\mathcal{M}$, convergence in  the \emph{pointed Gromov-Hausdorff sense}.

\begin{rmk}
When $(X_i,d_{X_i})$ and $(Y,d_Y)$ are  locally compact metric spaces, we have that $(X_i,d_{X_i},x_i)$ converges to $(Y,d_Y,y)$ in the pointed Gromov-Hausdorff sense if and only if for any $R>0$ the open balls $(B_R(x_i),d_{X_i})$ converge to $(B_R(y),d_Y)$ in the Gromov-Hausdorff sense.
\end{rmk}

\begin{rmk}\th\label{R: GH convergence compact spaces implies pGH}
When we consider a sequence of compact spaces $(X_i,d_{X_i})$ and $(Y,d_Y)$, then convergence with respect to the Gromov-Hausdorff distance is equivalent to convergence in the pointed Gromov-Hausdorff sense, in the following sense: in the case when $(X_i,d_{X_i})$ converges in the Gromov-Hausdorff sense to $(Y,d_Y)$, given $y\in Y$ there exists a sequence $\{x_i\mid x_i\in X_i\}_{i\in \N}$ such that $(X_i,d_{X_i},x_i)$ converges in the pointed-Gromov-Hausdorff sense to $(Y,d_Y,y)$ (see \cite[Exercise 8.12]{BuragoBuragoIvanov}).
\end{rmk}

Moreover, for compact spaces we have a nice characterization of convergence in the Gromov-Hausdorff sense. Recall that given $\varepsilon >0$, a subset $S\subset X$ of a metric space is \emph{an $\varepsilon$-net} if we have that $d_X(x,S) = \inf\{d_X(x,s)\mid s\in S\}\leq \varepsilon$. Given $X$, $Y$ two compact metric spaces, and $\varepsilon,\delta>0$, we say that \emph{$X$ and $Y$ are $(\varepsilon,\delta)$-approximations of each other} if there exists $\{x_i\}_{i=1}^N\subset X$, $\{y_i\}_{i=1}^N\subset Y$ such that 
\begin{enumerate}
\item The sets $\{x_i\}_{i=1}^N$, $\{y_i\}_{i=1}^N$ are $\varepsilon$-nets,
\item $|d_X(x_i,x_j)-d_Y(y_i,y_j)|<\delta$ for all $i,j\in \{1,\ldots,N\}$.
\end{enumerate}
When $\varepsilon= \delta$, we say that \emph{$X$ and $Y$ are $\varepsilon$-approximations of each other}. With this we can write the characterization of convergence in the Gromov-Hausdorff sense for compact spaces.

\begin{prop}[Proposition 7.4.11 in \cite{BuragoBuragoIvanov}]\th\label{P: characterization of GH convergence via approximations}
Let $X$ and $Y$ be compact metric spaces. Then we have
\begin{enumerate}[(1)]
\item If $Y$ is an $(\varepsilon,\delta)$ approximation of $X$, then $d_{\mathrm{GH}}(X,Y)< 2\varepsilon+\delta$.
\item If $d_{\mathrm{GH}}(X,Y)<\varepsilon$, then $Y$ is a $5\varepsilon$-approximation of $X$.
\end{enumerate}
\end{prop}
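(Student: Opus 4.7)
The plan is to handle the two implications separately, both by unpacking the definitions of Hausdorff distance and of an $(\varepsilon,\delta)$-approximation, with the only real content being the construction of an ambient metric on $X\sqcup Y$ in part (1) and a net transfer across an ambient metric in part (2).

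For (1), assume $\{x_i\}_{i=1}^N\subset X$ and $\{y_i\}_{i=1}^N\subset Y$ witness that $Y$ is an $(\varepsilon,\delta)$-approximation of $X$. I would define a metric $d$ on the disjoint union $X\sqcup Y$ by declaring $d|_X=d_X$, $d|_Y=d_Y$, and for $x\in X$, $y\in Y$,
\[
d(x,y)\;=\;\inf_{1\le i\le N}\bigl\{\,d_X(x,x_i)+\tfrac{\delta}{2}+d_Y(y_i,y)\,\bigr\}.
\]
The symmetry and positivity are clear; the main thing to verify is the triangle inequality, where the only non-automatic case requires $d_X(x,x')\le d(x,y)+d(y,x')$ and its symmetric analogue. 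Both reduce to the numerical inequality $|d_X(x_i,x_j)-d_Y(y_i,y_j)|<\delta$, which guarantees that traveling through two ``bridges'' $x_i\!\leadsto\! y_i$ and $y_j\!\leadsto\! x_j$ never undercuts the intrinsic distance. Since $\{x_i\}$ is an $\varepsilon$-net in $X$, every $x\in X$ lies within $\varepsilon$ of some $x_i$, hence within $\varepsilon+\delta/2$ of $y_i\in Y$; the symmetric statement holds, so $d_H(X,Y)\le \varepsilon+\delta/2<2\varepsilon+\delta$, and composing the two tautological isometric inclusions into $(X\sqcup Y,d)$ yields $d_{\mathrm{GH}}(X,Y)<2\varepsilon+\delta$.

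For (2), suppose $d_{\mathrm{GH}}(X,Y)<\varepsilon$. By the definition of the Gromov--Hausdorff distance there is a metric $d$ on $X\sqcup Y$ extending $d_X$ and $d_Y$ with $d_H(X,Y)<\varepsilon$. Using compactness of $X$, choose an $\varepsilon$-net $\{x_i\}_{i=1}^N$ in $X$, and for each $i$ pick $y_i\in Y$ with $d(x_i,y_i)<\varepsilon$. The triangle inequality in $(X\sqcup Y,d)$ gives
\[
|d_X(x_i,x_j)-d_Y(y_i,y_j)|\;\le\;d(x_i,y_i)+d(x_j,y_j)\;<\;2\varepsilon,
\]
so the distortion condition holds with room to spare ($2\varepsilon<5\varepsilon$). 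It remains to see that $\{y_i\}$ is a $5\varepsilon$-net in $Y$: given $y\in Y$, first pick $x\in X$ with $d(y,x)<\varepsilon$ (using $d_H(X,Y)<\varepsilon$), then pick $x_i$ with $d_X(x,x_i)<\varepsilon$; one gets $d_Y(y,y_i)\le d(y,x)+d(x,x_i)+d(x_i,y_i)<3\varepsilon<5\varepsilon$. Hence $X$ and $Y$ are $(5\varepsilon,5\varepsilon)$-approximations of each other.

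The only step that requires genuine care is the verification of the triangle inequality for the ambient metric constructed in part (1); all remaining arguments are straightforward chases through the definition of an $\varepsilon$-net and the triangle inequality in the ambient space. I expect the constants appearing ($\varepsilon+\delta/2$ in part (1), $3\varepsilon$ in part (2)) to be slightly sharper than those stated in the proposition, but this only strengthens both conclusions and is consistent with the version in \cite[Section 7.3]{BuragoBuragoIvanov}.
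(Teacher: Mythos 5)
The paper supplies no proof of this statement---it is cited verbatim as Proposition 7.4.11 of \cite{BuragoBuragoIvanov}---so there is no ``paper proof'' to compare against, only the textbook argument. Your proof is correct and is essentially that argument: in part (1) you glue $X$ and $Y$ along the correspondence $x_i\leftrightarrow y_i$ with a bridge of length $\delta/2$ and verify the one non-automatic case of the triangle inequality ($d_X(x,x')\le d(x,y)+d(y,x')$, and symmetrically) using $|d_X(x_i,x_j)-d_Y(y_i,y_j)|<\delta$; in part (2) you realize $d_{\mathrm{GH}}(X,Y)<\varepsilon$ by a metric on $X\sqcup Y$ and push an $\varepsilon$-net of $X$ across to $Y$. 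Your sharper intermediate constants ($d_H\le\varepsilon+\delta/2$ in (1); distortion $<2\varepsilon$ and a $3\varepsilon$-net in (2)) are consistent with the looser bounds $2\varepsilon+\delta$ and $5\varepsilon$ in the proposition, which are simply stated with slack. One small point to keep in mind: the bridge construction yields a genuine metric only because $\delta/2>0$ (so $d(x,y)\ge\delta/2$ separates $X$ from $Y$); if one wished to allow $\delta=0$, the standard fix is to use a bridge of length $\delta/2+\eta$ and let $\eta\to 0$, and likewise in part (2) the realization on $X\sqcup Y$ may a priori be a pseudometric, which is handled by the same $\eta$-perturbation. Neither issue affects the stated inequalities.
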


We also consider tuples $(X,d_X,H,x)$,  where $(X,d_X,x)\in \mathcal{M}$ and $H\subset \mathrm{Isom}(X,d_X)$ is a closed subgroup acting effectively on $X$. We say that $(X,d_X,H,x)$ is equivalent to $(Y,d_Y,H',y)$ if there exists $f\colon (X,d_X,x)\to (Y,d_Y,y)$ an isometry with $f(x) = y$, and $\phi\colon H\to H'$ a group isomorphism such that for any $h\in H$ and $x'\in X$ we have $f(h\cdot x') = \phi(h)\cdot f(x')$. We denote by $\mathcal{M}_{\mathrm{eq}}$ the collection of all equivalent tuples $(X,d_X,H,x)$. For $(X,d_X,H,x)\in \mathcal{M}_{\mathrm{eq}}$ and $r>0$ we set $H(r) = \{h\in H\mid h\cdot x\in B_r(x)\}$.

We define the following topology on $\mathcal{M}_{\mathrm{eq}}$: Given $(X,d_X,H,x)$, $(Y,d_Y,H',y)$ in $\mathcal{M}_{\mathrm{eq}}$ and $\varepsilon>0$, we define an \emph{$\varepsilon$-equivariant pointed Gromov-Hausdorff approximation} between $X$ and $Y$ to be  a triple of functions $(f,\phi,\psi)$,
\begin{linenomath}
\begin{align*}
f\colon B_{1/\varepsilon}(x)&\to Y,\\
\phi\colon H(1/\varepsilon)&\to H'(1/\varepsilon),\\
\psi\colon H'(1/\varepsilon)&\to H(1/\varepsilon),\\
\end{align*}
\end{linenomath}
such that 
\begin{enumerate}[(1)]
\setlength\itemsep{0.2em}
\item we have $f(x) = y$.
\item The $\varepsilon$-neighborhood of $f(B_{1/\varepsilon}(x))$ contains $B_{1/\varepsilon}(y)$.
\item For $p,q\in B_{1/\varepsilon}(x)$ we have $|d_Y(f(p),f(p'))-d_X(p,p')|<\varepsilon$.
\item Given $h\in H(1/\varepsilon)$, $p\in B_{1/\varepsilon}(x)$ with $h\cdot p\in B_{1/\varepsilon}(x)$ we have
\[
d_Y(f(h\cdot p),\phi(h)\cdot f(p)) <\varepsilon.
\]
\item For $h'\in H'(1/\varepsilon)$, $p\in B_{1/\varepsilon}(x)$ with $\psi(h')\cdot p\in B_{1/\varepsilon}(x)$, then
\[
d_Y(f(\psi(h')\cdot p),h'\cdot f(p))<\varepsilon.
\]
\end{enumerate}
For $(X,d_X,H,x),(Y,d_Y,H',y)\in\mathcal{M}_{\mathrm{eq}}$ we define the \emph{equivariant pointed Gromov-Haus\-dorff distance}, $d_{\mathrm{eqGH}}\Big((X,d_X,H,x),(Y,d_Y,H',y)\Big)$, to be the infimum of all numbers $\varepsilon>0$ such that there exists an $\varepsilon$-equivariant pointed Gromov-Hausdorff approximation from $(X,d_X,H,x)$ to $(Y,d_Y,H',y)$ and from $(Y,d_Y,H',y)$ to $(X,d_X,H,x)$. This induces on $\mathcal{M}_{\mathrm{eq}}$ the \emph{equivariant pointed Gromov-Hausdorff topology}.

Given a sequence $\{(X_i,d_{X_i},H_i,x_i)\}_{i\in \N}$ and $(Y,d_Y,H',y)\in \mathcal{M}_{\mathrm{eq}}$, we have the following relation between convergence with respect to the equivariant pointed Gromov-Hausdorff distance, and convergence in the pointed Gromov-Hausdorff sense of the pointed quotient spaces, $(X/H,d_X^\ast,x^\ast)$ and $(Y/H',d_Y^\ast,y^\ast)$.

\begin{theorem}[Theorem 2.1 in \cite{Fukaya1986}]\th\label{T: eqpGH convergence implies pGH convergence}
Let  $\{(X_i,d_{X_i},H_i,x_i)\}_{i\in\N}\subset \mathcal{M}_{\mathrm{eq}}$ have limit \linebreak$(Y,d_Y,H',y)\in \mathcal{M}_{\mathrm{eq}}$ with respect to the equivariant pointed Gromov-Hausdorff distance. Then we  have that $(Y/H',d_Y^\ast,y^\ast)$ is the limit of the sequence $\{(X_i/H_i,d_{X_i}^\ast,x_i^\ast)\}_{i\in \N}$ in the sense of  pointed Gromov-Hausdorff convergence.
\end{theorem}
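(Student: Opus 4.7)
The plan is to produce, from an $\varepsilon$-equivariant pointed Gromov-Hausdorff approximation $(f,\phi,\psi)$ between $(X_i,d_{X_i},H_i,x_i)$ and $(Y,d_Y,H',y)$, an honest pointed Gromov-Hausdorff approximation $\bar{f}\colon (X_i/H_i,d_{X_i}^\ast,x_i^\ast)\to (Y/H',d_Y^\ast,y^\ast)$ whose error tends to zero with $\varepsilon$. Fix $R>0$ and $\eta>0$; assuming $\varepsilon$ is sufficiently small with respect to $R$ and $\eta$, I will exhibit such an approximation on $B_R(x_i^\ast)$, which by definition of pointed Gromov-Hausdorff convergence suffices.

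The construction uses lifts to representative points. For $[p]\in B_R(x_i^\ast)$, the definition of the quotient metric lets one choose a lift $p\in X_i$ with $d_{X_i}(p,x_i)< R+\varepsilon$; then set $\bar{f}([p]):=[f(p)]$. Well-definedness modulo $\varepsilon$ comes from condition $(4)$ of the equivariant approximation: if $p$ and $h\cdot p$ are two lifts of the same class lying in $B_{1/\varepsilon}(x_i)$, then $d_Y(f(h\cdot p),\phi(h)\cdot f(p))<\varepsilon$, and since $\phi(h)\in H'$ this forces $d_Y^\ast([f(h\cdot p)],[f(p)])<\varepsilon$. The base-point condition $\bar{f}(x_i^\ast)=y^\ast$ is immediate from $f(x_i)=y$.

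For the metric distortion, the upper bound is direct: using lifts $p,q$ near $x_i$ realizing $d_{X_i}^\ast([p],[q])$ up to $\varepsilon$, condition $(3)$ gives
\[
d_Y^\ast(\bar{f}([p]),\bar{f}([q]))\leq d_Y(f(p),f(q))\leq d_{X_i}(p,q)+\varepsilon\leq d_{X_i}^\ast([p],[q])+2\varepsilon.
\]
For the reverse inequality, select $h'\in H'$ for which $h'\cdot f(q)$ realizes $d_Y^\ast([f(p)],[f(q)])$ up to $\varepsilon$, noting that $h'$ moves $y$ by at most $R+O(\varepsilon)$. Then condition $(5)$ yields $d_Y(f(\psi(h')\cdot q),h'\cdot f(q))<\varepsilon$, and another application of $(3)$ transfers the control back to $X_i$, giving $d_{X_i}^\ast([p],[q])\leq d_{X_i}(p,\psi(h')\cdot q)\leq d_Y^\ast(\bar{f}([p]),\bar{f}([q]))+O(\varepsilon)$. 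Finally, for the density condition, any $[y'']\in B_{R-\eta}(y^\ast)$ admits a lift $y'\in B_R(y)$, and by condition $(2)$ there is $p\in B_{R+\varepsilon}(x_i)$ with $d_Y(f(p),y')<\varepsilon$, so $\bar{f}([p])$ lies within $\varepsilon$ of $[y'']$ in the quotient metric.

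The main obstacle, and the reason the argument is not purely formal, is \emph{scale matching}: the maps $\phi,\psi$ are defined only on the finite pieces $H_i(1/\varepsilon)$ and $H'(1/\varepsilon)$, whereas the group elements that realize (up to error) the quotient distances or the alternative lifts of a given orbit must be controlled to lie inside these pieces. The key point is that a near-minimizing group element translates the base point by at most $R+O(\varepsilon)$, and hence belongs to the relevant piece once $\varepsilon<1/(R+C)$ for a constant depending only on the scales. Choosing $\varepsilon$ small in terms of $R$ and $\eta$ closes this loop, and hence the construction above produces an $O(\varepsilon)$-pointed Gromov-Hausdorff approximation of the quotients, proving the claimed convergence.
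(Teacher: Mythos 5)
This statement is quoted from Fukaya (Theorem 2.1 in the cited 1986 paper) and the present paper gives no proof of it, so there is nothing internal to compare against; judged on its own, your argument is correct and is essentially the standard proof of Fukaya's result: push an $\varepsilon$-equivariant pointed approximation $(f,\phi,\psi)$ down to an $O(\varepsilon)$-pointed approximation of the quotients, using (4) for well-definedness up to $\varepsilon$, (3) for the upper distortion bound, (5) plus (3) for the lower one, and (2) for almost-surjectivity. The only place needing slightly more care than you indicate is the hypothesis of condition (5), namely that $\psi(h')\cdot q$ lies in $B_{1/\varepsilon}(x_i)$: this follows by first applying (5) at the base point and (3) to bound $d_{X_i}(x_i,\psi(h')\cdot x_i)$ by roughly $d_Y(y,h'\cdot y)$, and your constants in the scale-matching step should be multiples of $R$ (near-minimizing elements move the base point by about $4R$, not $R$) — but these are routine adjustments absorbed by your ``constant depending only on the scales'' remark, and the proof goes through.
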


Moreover, we have then the following information on the limit of sequences in $\mathcal{M}_{\mathrm{eq}}$ with respect to the pointed Gromov-Hausdorff convergence:

\begin{theorem}[Proposition~3.6 in \cite{FukayaYamaguchi1992}]\th\label{T: pGH-convergence implies eqGH-convergence}
Assume that for a sequence $\{(X_i,d_{X_i},H_i,x_i)\}_{i\in \N}$ in $\mathcal{M}_{\mathrm{eq}}$ we have that $(Y,d_Y,y)\in \mathcal{M}$ is the limit of $(X_i,d_{X_i},x_i)$ in the pointed Gromov-Hausdorff sense. Then there exist $H'\subset \mathrm{Isom}(Y,d_Y)$ a closed subgroup acting effectively on $Y$, and a subsequence $\{(X_{i_k},d_{X_{i_{
k}}},H_{i_k},x_{i_k})\}_{k\in \N}$ converging to $(Y,d_Y,H',y)$ with respect to the pointed equivariant Gromov-Hausdorff topology. 
\end{theorem}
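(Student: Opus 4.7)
The plan is to construct the limit group $H'$ via an Arzelà--Ascoli extraction applied to group elements, combined with a diagonal argument over an exhausting sequence of radii, closure in $\mathrm{Isom}(Y,d_Y)$, and finally quotienting by the kernel of the limit action to force effectiveness. I then produce the required $\varepsilon$-equivariant pointed Gromov--Hausdorff approximations.

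First I would fix, for each $i$, a pointed $\varepsilon_i$-Gromov--Hausdorff approximation $f_i\colon B_{1/\varepsilon_i}(x_i) \to Y$ with $\varepsilon_i \to 0$, together with an approximate inverse $g_i\colon B_{1/\varepsilon_i}(y) \to X_i$. For each fixed $R>0$ and each $h \in H_i(R)$, transport the action of $h$ to $Y$ by the almost isometry $T_i(h) := f_i \circ h \circ g_i$, defined on $B_{R/2}(y)$ for $i$ large. Because the isometries $h$ are $1$-Lipschitz and $f_i,g_i$ distort distances by at most $\varepsilon_i$, the family $\{T_i(h)\}_{h \in H_i(R)}$ is equicontinuous (indeed, $\varepsilon_i$-nearly distance preserving) and uniformly bounded on compact subsets of $Y$. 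Since $Y$ is locally compact as an element of $\mathcal{M}$, Arzelà--Ascoli applies: any sequence of such maps admits a subsequence converging uniformly on compact subsets to an isometry of $Y$.

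Next I would apply this precompactness simultaneously to all group elements via a diagonal extraction. Fix an increasing sequence $R_k \to \infty$ and, using separability of the proper isometric action, a countable dense collection of representatives in each $H_i(R_k)$. Extract nested subsequences so that for every fixed representative the maps $T_{i_k}(h)$ converge to an isometry of $Y$. The resulting limit set $\Phi \subset \mathrm{Isom}(Y,d_Y)$ is closed under composition and inversion: the approximate relations $T_i(hh') \approx T_i(h) \circ T_i(h')$ and $T_i(h^{-1}) \approx T_i(h)^{-1}$, valid up to $O(\varepsilon_i)$ on compact sets, pass to the limit. Hence the topological closure $\overline{\Phi}$ is a closed subgroup of $\mathrm{Isom}(Y,d_Y)$.

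The main obstacle is effectiveness: although each $H_i$ acts effectively on $X_i$, elements of $H_i$ whose translation lengths on $B_{R_k}(x_i)$ shrink to $0$ produce trivial limits, so the limit action of $\overline{\Phi}$ on $Y$ may fail to be effective. To resolve this, let $K \subset \overline{\Phi}$ be the closed normal subgroup acting trivially on $Y$ and define $H' := \overline{\Phi}/K$, which is a closed subgroup of $\mathrm{Isom}(Y,d_Y)$ acting effectively. Finally, define the homomorphisms $\phi_{i_k}\colon H_{i_k}(1/\varepsilon_{i_k}) \to H'(1/\varepsilon_{i_k})$ by sending $h$ to the $K$-class of the limit of $T_{i_k}(h)$, and $\psi_{i_k}$ via a near-section, chosen by approximating each limit element in $H'(1/\varepsilon_{i_k})$ by some $T_{i_k}(h)$ on a large ball. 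The five defining conditions of an $\varepsilon$-equivariant pointed Gromov--Hausdorff approximation then reduce to the metric condition already encoded in $f_{i_k}$ together with the defining relation $f_{i_k} \circ h \approx \phi_{i_k}(h) \circ f_{i_k}$, which is precisely the statement that $T_{i_k}(h)$ approximates a representative of its limit class.
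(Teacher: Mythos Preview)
The paper does not supply its own proof of this statement: it is quoted in the preliminaries as Proposition~3.6 of Fukaya--Yamaguchi and used as a black box. There is therefore nothing in the paper to compare your argument against.

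That said, your outline is essentially the standard Fukaya--Yamaguchi argument and is broadly correct. One point is worth tightening: your discussion of effectiveness is superfluous as written. You build $\overline{\Phi}$ as a closed subset of $\mathrm{Isom}(Y,d_Y)$, and an isometry of $Y$ that acts trivially is the identity; hence $K=\{e\}$ and $H'=\overline{\Phi}$ already acts effectively. The genuine phenomenon you are gesturing at is that distinct elements of $H_i$ may collapse to the same limit isometry, but this is handled automatically by working inside $\mathrm{Isom}(Y,d_Y)$ rather than by a quotient. A second minor point: the maps $\phi,\psi$ in the definition of an $\varepsilon$-equivariant approximation are not required to be group homomorphisms, so you should not claim that $\phi_{i_k}$ is one; it suffices that it be a set map satisfying conditions (4) and (5). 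Finally, your diagonal extraction over ``a countable dense collection of representatives in each $H_i(R_k)$'' is imprecise because the $H_i$ are different groups; the cleaner route is to use that each $H_i(R)$ is compact (the isometry group of a proper metric space acts properly) and pass to a Hausdorff limit of the transported sets $f_i\big(H_i(R)\cdot x_i\big)$, or equivalently to a limit in the Chabauty topology on closed subgroups.
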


Moreover in the case of Alexandrov spaces of constant dimension we have the following result:

\begin{theorem}[Proposition~4.1 in \cite{Harvey2016}]\th\label{T: equiv. convergence Alexandrov spaces}
Let 
\[
\Omega_\kappa^n = \{(X,d_X,H,p)\in \mathcal{M}_{\mathrm{eq}}\mid (X,d_X)\in \Alex (\kappa),\: \dime(X) = n,\: H\mbox{ is compact}\}\subset \Alex(\kappa).
\] 
Let $\{(X_i,d_{X_i},H_i,x_i)\}_{i\in \N}\subset\Omega^n_\kappa$ be a sequence, converging in the equivariant pointed Gromov-Hausdorff topology to $(Y,d_Y,H',y)$. Then for a sufficiently large index $i\in \N$, the group $H'$ contains an isomorphic image of $H_i$.
\end{theorem}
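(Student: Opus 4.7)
The plan is to upgrade the approximate homomorphisms provided by the equivariant Gromov--Hausdorff approximations into genuine injective homomorphisms $H_i\hookrightarrow H'$, crucially using the no-collapse hypothesis $\dime(X_i)=\dime(Y)=n$. The two main ingredients are Perelman's stability theorem together with \th\ref{R: Perelman stability preserves stratification}, and the fact that the isometry group of a finite-dimensional Alexandrov space is a Lie group, so that each compact $H_i$ is a compact Lie subgroup of $\mathrm{Isom}(X_i)$ and $H'$ is a Lie subgroup of $\mathrm{Isom}(Y)$.

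First I would extract from each $\varepsilon_i$-equivariant pGH approximation $(f_i,\phi_i,\psi_i)$, with $\varepsilon_i\to 0$, an $\eta_i$-approximate homomorphism $\phi_i\colon H_i(1/\varepsilon_i)\to H'(1/\varepsilon_i)$: combining conditions (3), (4), and (5) in the definition of an $\varepsilon$-equivariant pGH approximation yields, for $p\in B_{1/\varepsilon_i}(x_i)$, that $\phi_i(h_1 h_2)\cdot f_i(p)$ is $2\varepsilon_i$-close to $\phi_i(h_1)\phi_i(h_2)\cdot f_i(p)$, so that $\phi_i(h_1h_2)$ is $\eta_i$-close to $\phi_i(h_1)\phi_i(h_2)$ in the induced topology on $H'$, with $\eta_i\to 0$. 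A compactness argument, together with the fact that $H_i$ is compact and $\varepsilon_i\to 0$, shows that for $i$ large $H_i(1/\varepsilon_i)$ exhausts $H_i$.

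Next, \th\ref{T: Perelmans stability} combined with \th\ref{R: Perelman stability preserves stratification} supplies homeomorphisms $\Phi_i\colon X_i\to Y$ preserving the extremal stratification for $i$ large. Conjugating by $\Phi_i$ transports the $H_i$-action to a family of self-homeomorphisms of $Y$ which, thanks to the equivariance relations of the pGH approximation, are $C^0$-close to the corresponding elements $\phi_i(h)\in H'$. A standard rigidity argument for approximate homomorphisms between compact Lie groups---passing to the Lie-algebra level and averaging, in the spirit of Grove--Karcher \cite{GroveKarcher1973}---then lets us correct $\phi_i$ to a genuine continuous homomorphism $\widetilde\phi_i\colon H_i\to H'$ uniformly close to $\phi_i$ for $i$ sufficiently large.

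Finally, injectivity of $\widetilde\phi_i$ follows from effectiveness of the $H_i$-action together with the non-collapse hypothesis: if $\widetilde\phi_i(h)=e_{H'}$ for some $h\neq e_{H_i}$, then the induced action of $h$ on $Y$ via $\Phi_i$ would be $C^0$-close to the identity, but effectiveness of the compact Lie group $H_i$ acting on the $n$-dimensional space $X_i$ bounds non-identity elements uniformly away from the identity in the $C^0$-topology, yielding a contradiction. The main obstacle is the correction step in the preceding paragraph: passing from an approximate homomorphism to a genuine one between compact Lie groups, where the no-collapse hypothesis is indispensable, since in the collapsing setting $H'$ may acquire directions absent from any $H_i$ and no such embedding need exist.
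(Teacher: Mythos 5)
You are not really comparing against anything in the paper here: the paper does not prove this statement, it imports it verbatim from Harvey \cite{Harvey2016}, so your attempt can only be judged on its own terms. Judged that way, the most serious problem is the injectivity step. You claim that effectiveness of the compact group $H_i$ acting on the $n$-dimensional space $X_i$ "bounds non-identity elements uniformly away from the identity in the $C^0$-topology". This is false whenever $H_i$ has positive dimension (the relevant case here): by uniform continuity of the action on the compact set $H_i\times K$, elements of $H_i$ close to the identity displace every point by arbitrarily little, so there is no such gap even for a single fixed $i$, let alone uniformly in $i$. What your argument actually needs is different and genuinely harder: the kernel of the corrected homomorphism is a closed \emph{subgroup} $N_i\trianglelefteq H_i$ all of whose elements have displacement of order $\varepsilon_i$ on large balls, and one must rule out such nontrivial small-displacement subgroups by a quantitative Newman-type rigidity statement that is uniform along the non-collapsing sequence --- this is exactly where the hypothesis $\dime(X_i)=\dime(Y)=n$ and the curvature bound have to do real work, and "effectiveness" alone gives nothing. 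Note also that the $H_i$-actions transported to $Y$ by the stability homeomorphisms are only actions by homeomorphisms, so you cannot apply isometric/Alexandrov displacement arguments to them directly; the stability step as you use it is essentially decorative.

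There are two further gaps in the setup. First, the claim that $H_i(1/\varepsilon_i)$ exhausts $H_i$ "by compactness of $H_i$ and $\varepsilon_i\to 0$" does not follow: the diameters of the orbits $H_i\cdot x_i$ need not be uniformly bounded, and the admissible rate of $\varepsilon_i\to 0$ is dictated by the equivariant Gromov--Hausdorff distances, not chosen freely (rotation groups about centers escaping to infinity, converging equivariantly to a translation flow, show that this exhaustion can fail in the pointed setting; in the compact setting in which the paper applies the theorem orbits are uniformly bounded, but your stated justification does not supply this). Second, the correction of an almost homomorphism to a homomorphism in the spirit of Grove--Karcher(--Ruh) \cite{GroveKarcher1973} requires the map $\phi_i$ to be at least measurable (the construction averages over $H_i$) and requires suitable control on the target group --- e.g.\ compactness of $H'$ with a bi-invariant metric --- neither of which is automatic from the definition of an equivariant approximation, though both are repairable when $Y$ is compact. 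So while the skeleton (extract an almost homomorphism, correct it, analyze the kernel) is a plausible route to the result, the two places where non-collapse must actually be used --- the correction and, above all, the triviality of the kernel --- are precisely the steps you assert rather than prove, and the kernel step is supported by an incorrect claim.
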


\subsection{\texorpdfstring{$\mathsf{N}$}{N}- and \texorpdfstring{$\mathsf{F}$}{F}-structures}\label{SS: N- and F-structures}

The concept of an $\F$-structure (already defined in Section~\ref{S: Introduction}) was introduced by Cheeger and Gromov in \cite{CheegerGromov1986} generalizing the notion of torus actions on manifolds. Moreover they proved that for given $\F$-structure  whose orbits have positive dimension on a compact manifold we can find a family of Riemannian metrics $\{g_t\mid 1\geq t> 0\}$ on the manifold such that the \emph{manifold collapses with bounded curvature}. That is, the injectivity radius of  $(M,g_t)$  goes to $0$ as $t\to 0$, and there exist $\lambda\leq \Lambda$ such that $\lambda \leq \Sec(g_t)\leq \Lambda$ for all $0<t\leq 1$.

In \cite{Fukaya1987,Fukaya1989} Fukaya studied the phenomena of collapse when the sequence $\{(M,g_t)\}$ converges in the Gromov-Hausdorff sense to a Riemannian manifold, and then in \cite{CheegerFukayaGromov1992} it was proven that collapse is controlled by so called \emph{Nilpotent structures}, or \emph{$\Ns$-structures}. We now recall the definition of such a structure.

Given a Riemannian manifold $(M,g)$  of dimension $m$ we consider $\pi\colon F(M)\to M$ to be the $\Or(m)$-principal frame bundle of $M$.  A pure $\Ns$-structure on $(M,g)$ is given by a fibration $\tilde{\eta}\colon F(M)\to B$ such that:
\begin{enumerate}[(i)]
\item The fiber is a nilmanifold isometric to $(N/\Gamma,\nabla^{\mathrm{can}})$, where $N$ is a simply-connected nilpotent group and $\nabla^{\mathrm{can}}$ is the canonical connection on $N$ making all left-invariant vector fields parallel.
\item The structure group of $\tilde{\eta}\colon F(M)\to B$ is contained in the group of affine automorphisms of the fiber.
\item The action of $\Or(m)$ preserves the fibers and the structure group of the fibration $\tilde{\eta}$.
\end{enumerate}  
A pure $\Ns$-structure induces a partition $\mathscr{O}_{\tilde{\eta}}$ of $M$ into submanifolds called \emph{orbits}  which are the images of the $\tilde{\eta}$-fibers under the projection $\pi\colon F(M)\to M$ (observe that the ``orbits'' of $\mathscr{O}$ are not orbits of a group action, but they are defined by orbits of the action of $N$ on $F(M)$). We refer to this decomposition of $M$ as a \emph{pure $\Ns$-structure on $M$}. An $\Ns$-structure has \emph{positive rank} if the orbits have positive dimension.

We observe that the centers of the fibers of a pure $\Ns$-structure on $F(M)$ determine a torus bundle on the nilpotent group $N$. This gives rise to a second $\Or(m)$-invariant fiber bundle $\tilde{f}\colon F(M)\to B_{\tilde{f}}$ whose fiber is a torus, and has affine structure group. We call $\tilde{f}$ the \emph{canonical torus bundle of $\tilde{\eta}$}. We have the following relationship between $\tilde{\eta}$ and $\tilde{f}$ when the fundamental group of $M$ is finite.

\begin{theorem}[Lemma 4.1 and Lemma 3.2 in \cite{Rong1996}]\th\label{T: N-structure over manifold with finite fundamental group are given by T-structure}
An $\Ns$-structure $\tilde{\eta}\colon F(M)\to B_{\tilde{\eta}}$ on $M$ a manifold with finite fundamental group coincides with its canonical torus bundle $\tilde{f}\colon F(M)\to B_{\tilde{f}}$. That is, $B_{\tilde{\eta}} = B_{\tilde{f}}$ and $\tilde{\eta}=\tilde{f}$. Moreover, when $M$ is simply connected then an $\Ns$-structure on $M$ is given by a torus action on $M$.
\end{theorem}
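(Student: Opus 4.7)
The plan is to reduce both statements of the theorem to a single claim: the nilpotent Lie group $N$ appearing in the fiber $N/\Gamma$ of $\tilde\eta$ must be abelian whenever $\pi_1(M)$ is finite. Once $N$ is abelian, $Z(N)=N$, the canonical torus bundle $\tilde f$ automatically coincides with $\tilde\eta$, and in the simply-connected case the resulting torus bundle should integrate to a genuine $T^k$-action.

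I would first bound $\pi_1(F(M))$. From the frame-bundle fibration $\Or(m)\to F(M)\to M$ one reads off the exact sequence
\[
\pi_1(\Or(m)) \to \pi_1(F(M)) \to \pi_1(M) \to \pi_0(\Or(m)),
\]
and since $\pi_1(\Or(m))$ is finite for $m\geq 2$ (and the cases $m\leq 1$ are trivial) together with the hypothesis $|\pi_1(M)|<\infty$, we conclude that $\pi_1(F(M))$ is a finite group.

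Next I would apply the homotopy long exact sequence of the $\Ns$-fibration $N/\Gamma \to F(M) \xrightarrow{\tilde\eta} B_{\tilde\eta}$. Because the nilmanifold $N/\Gamma$ is aspherical it is a $K(\Gamma,1)$, so the sequence reduces to
\[
\pi_2(B_{\tilde\eta}) \xrightarrow{\partial} \Gamma \to \pi_1(F(M)) \to \pi_1(B_{\tilde\eta}) \to 1.
\]
The image of $\Gamma$ inside the finite group $\pi_1(F(M))$ is finite, so the kernel $\Gamma_0 := \ker(\Gamma \to \pi_1(F(M)))$ has finite index in the torsion-free lattice $\Gamma$ and therefore remains a cocompact lattice in $N$ (since $N/\Gamma_0$ finitely covers the compact manifold $N/\Gamma$). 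By exactness $\Gamma_0$ is a quotient of the abelian group $\pi_2(B_{\tilde\eta})$ and is hence abelian. Mal'cev rigidity for simply-connected nilpotent Lie groups then forces $N$ itself to be abelian: the bracket on $\mathrm{Lie}(N)$ is determined by the commutators in the cocompact lattice $\Gamma_0$, which vanish. Consequently $N\cong \R^k$, the fiber $N/\Gamma$ is a torus, $Z(N)=N$, and $B_{\tilde\eta}=B_{\tilde f}$ with $\tilde\eta=\tilde f$.

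For the final claim, when $\pi_1(M)=1$ the group $\pi_1(F(M))$ is merely a quotient of $\pi_1(\Or(m))$, hence finite. The bundle $\tilde f:F(M)\to B_{\tilde f}$ has affine structure group acting on its torus fiber $T^k$ and is $\Or(m)$-equivariant; averaging over the finite monodromy inside $\mathrm{Aff}(T^k)$ trivializes the rotational part of the structure group, producing a genuine isometric $T^k$-action on $F(M)$ that commutes with the $\Or(m)$-action and descends to an isometric torus action on $M$ whose orbits coincide with those of the $\Ns$-structure. I expect the most delicate step to be verifying that $\Gamma_0$ is cocompact in the full group $N$ (rather than in some proper nilpotent subgroup) so that Mal'cev rigidity applies cleanly, together with the monodromy-trivialization step in the simply-connected case; the rest of the argument is essentially homotopy-theoretic bookkeeping combined with standard patching arguments from the Cheeger-Fukaya-Gromov collapse theory.
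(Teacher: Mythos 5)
The paper offers no proof of this statement — it is imported verbatim from Rong (Lemmas 3.2 and 4.1 there) — so your argument has to stand on its own. Its first half essentially does, and follows the standard reduction: $\pi_1(F(M))$ is finite (correct for $m\geq 3$; note that $\pi_1(\Or(2))\cong\Z$, so your claim ``finite for $m\geq 2$'' fails at $m=2$, though this is harmless since the paper only invokes the result for $m>2$), the homotopy exact sequence of $N/\Gamma\to F(M)\to B_{\tilde\eta}$ exhibits $\Gamma_0=\kernel\bigl(\Gamma\to\pi_1(F(M))\bigr)$ as a finite-index subgroup of $\Gamma$ which is a quotient of $\pi_2(B_{\tilde\eta})$, hence abelian, and $\Gamma_0$ is still a cocompact lattice in $N$; Mal'cev rigidity then forces $N\cong\R^k$, so $Z(N)=N$ and $\tilde f=\tilde\eta$. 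That part is sound.

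The genuine gap is in the simply connected case, exactly at the step you flag as delicate: ``averaging over the finite monodromy inside $\mathrm{Aff}(T^k)$ trivializes the rotational part of the structure group.'' Averaging can make metrics or connections invariant, but it cannot remove a nontrivial finite monodromy in $\mathrm{GL}(k,\Z)$: a torus bundle admits a torus action on its total space whose orbits are the fibers only if the structure group reduces to the fiberwise translations, and a finite nontrivial monodromy obstructs this. The Klein bottle, viewed as an $S^1$-bundle over $S^1$ with monodromy $-1$, has finite structure group yet admits no circle action whose orbits are the fibers (such an action would make it a principal-type bundle over $S^1$, i.e.\ a torus). In your setting, finiteness of $\pi_1(F(M))$ only yields that $\pi_1(B_{\tilde f})$, and hence the monodromy image, is finite — possibly $\Z/2$ even when $\pi_1(M)=1$ (e.g.\ when the frame bundle component has fundamental group $\Z_2$) — so triviality does not follow from your homotopy bookkeeping. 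What actually makes the simply connected case work, and what the cited results of Rong exploit, is the holonomy over $M$ itself: a pure structure of torus type determines a locally constant sheaf of local $T^k$-actions on $M$ (equivalently a flat bundle with fiber the torus, or its Lie algebra, and integral structure group), whose holonomy is a homomorphism defined on $\pi_1(M)$; simple connectivity of $M$ forces this holonomy to vanish, and only then do the local actions glue to a global effective $T^k$-action on $M$ whose orbits are the orbits of the $\Ns$-structure (the compatibility with the $\Or(m)$-action on $F(M)$ is then routine). As written, your final step would fail rather than merely need polishing; it should be replaced by this holonomy-over-$M$ argument.
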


By \cite{CheegerFukayaGromov1992}, given an $\Ns$-structure $\tilde{\eta}\colon F(M)\to B_{\tilde{\eta}}$ there exists a sheaf of Lie algebras on $F(M)$ whose local sections restrict to local right invariant vector fields on the fibers of $\tilde{\eta}$. We say that a Riemannian metric $g$ on $M$ is \emph{$\Ns$-invariant with respect to $\tilde{\eta}$} if these local sections  project under $\pi$ to local Killing fields for the metric $g$. In particular, given $V\subset (M,g)$ a tubular neighborhood of  an orbit of the $\Ns$-structure there exists a normal covering $\widetilde{V}\to V$	admitting an isometric $N$-action.
 
In \cite{CheegerFukayaGromov1992} the authors showed that a sufficiently collapsed manifold $(M,g)$ admits an $\Ns$-structure. And moreover there exists an $\Ns$-invariant Riemannian metric on $M$ related to $g$. 

\begin{theorem}[Theorems 1.3 and 1.7 in \cite{CheegerFukayaGromov1992}]\th\label{T: Existence of N structures on}
For $m>2$ and $D>0$ we denote by $\mathfrak{M}(m,D)$ the class of $m$-dimensional compact connected Riemannian manifolds $(M,g)$ with $|\Sec(g)|\leq 1$, and $\diam(M,g)\leq D$. 

Given $\varepsilon>0$, there exists a constant $v=v(m,D,\varepsilon)>0$ such that given $(M,g)\in \mathfrak{M}(m,D)$ with $\mathrm{Vol}(g)< v$, then $M$ admits a pure $\Ns$-structure $\widetilde{\eta}_\varepsilon\colon F(M) \to B$ of positive rank. Moreover, 
\begin{enumerate}[(a)]
\item there is a smooth metric $g^\varepsilon$ on $M$, which is $\Ns$-invariant for the $\Ns$-structure $\widetilde{\eta}_\varepsilon$,
\item the fibers of $\widetilde{\eta}_\varepsilon$ have diameter less than $\varepsilon$ with respect to $g^\varepsilon$, 
\item and
\[
e^{-\varepsilon} g< g^\varepsilon < e^{\varepsilon}g.
\]
\end{enumerate} 
\end{theorem}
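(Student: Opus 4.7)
The cited result is a deep theorem of Cheeger-Fukaya-Gromov, so the proposal is necessarily a high-level sketch of their strategy rather than a short self-contained argument.

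The plan is to work upstairs on the orthonormal frame bundle $\pi\colon F(M) \to M$, where the lifted metric has bounded curvature and (up to a uniform factor) an injectivity radius bounded below even when $M$ collapses, by a Bieberbach-type argument of Fukaya. First, I would exploit the Margulis lemma for manifolds of bounded curvature: there is $\varepsilon_0 = \varepsilon_0(m) > 0$ such that for every $p \in M$, the subgroup $\Gamma_{\varepsilon_0}(p) \subset \pi_1(M,p)$ generated by loops of length at most $\varepsilon_0$ based at $p$ is almost nilpotent. By the volume assumption $\mathrm{Vol}(g) < v$, together with bounded curvature and bounded diameter, a Bishop-Gromov-type packing argument forces a short nontrivial loop through every point, so $\Gamma_{\varepsilon_0}(p)$ has positive rank; this will eventually give the positive rank of the $\Ns$-structure.

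Next, I would use Fukaya's fibration theorem: for $(M,g) \in \mathfrak{M}(m,D)$ sufficiently collapsed, Gromov precompactness and a subsequence argument let one compare $(M,g)$ to its Gromov-Hausdorff limit $X$ and produce, on each small ball of $F(M)$, a smooth fibration whose fibers are close to orbits of the local nilpotent pseudo-group generated by the short loops. The crucial point, proved in \cite{CheegerFukayaGromov1992} by a delicate smoothing of the exponential chart and an application of Malcev's structure theorem to the local pseudo-group, is that these local fibers are diffeomorphic to quotients $N/\Gamma$ of simply-connected nilpotent Lie groups, carrying an essentially canonical flat affine connection. The $\Or(m)$-action on $F(M)$ preserves these local structures because it permutes the short loops in a geometrically controlled way, so one can patch the local fibrations into a single $\Or(m)$-invariant fibration $\widetilde{\eta}_\varepsilon \colon F(M) \to B$ whose fibers are the desired nilmanifolds; this yields the pure $\Ns$-structure and, via $\pi$, the partition of $M$ into orbits.

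For the metric statement, I would construct $g^\varepsilon$ by averaging. The local sheaf of right-invariant vector fields on the fibers of $\widetilde{\eta}_\varepsilon$ projects to a sheaf of local Killing candidates on $M$; replacing $g$ by its average over the local nilpotent action (using a partition of unity subordinate to the cover giving $\widetilde{\eta}_\varepsilon$, with convolution kernels tuned to scale $\varepsilon$) produces an $\Or(m)$-invariant metric on $F(M)$ whose fibers become totally geodesic and with respect to which the sheaf acts by isometries; pushing down gives $g^\varepsilon$. The $C^0$-estimate $e^{-\varepsilon} g < g^\varepsilon < e^\varepsilon g$ comes from the standard smoothing inequalities combined with the small-diameter property of the fibers, once one chooses the averaging parameter as a function of $\varepsilon$. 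The hardest step is unquestionably the second one: controlling the local nilpotent pseudo-group uniformly enough to glue the fibers into a global $\Ns$-structure, because the patching requires that nearby short-loop pseudo-groups agree up to an affine automorphism of the nilmanifold fiber, which in turn requires the full strength of the deep regularity theory for collapsed manifolds.
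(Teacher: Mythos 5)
This statement is quoted verbatim from Cheeger--Fukaya--Gromov (Theorems 1.3 and 1.7 of \cite{CheegerFukayaGromov1992}) and the paper offers no proof of its own, so the only point of comparison is fidelity to the cited source: your sketch (frame bundle, Margulis-type short-loop nilpotent pseudo-groups, Fukaya's fibration theorem, $\Or(m)$-equivariant patching into a pure $\Ns$-structure, and averaging to produce the invariant metric with the $e^{\pm\varepsilon}$ bounds) is an accurate outline of exactly that argument. The one imprecision worth flagging is that the injectivity radius of $F(M)$ is not itself bounded below under collapse; what Cheeger--Fukaya--Gromov actually exploit is that the $\Or(m)$-equivariant Gromov--Hausdorff limit of $F(M)$ is a smooth manifold, so the local geometry of $F(M)$ is controlled modulo the local nilpotent action rather than in the absolute sense you state.
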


Moreover, the metric $g^\varepsilon$ can be chosen so that the sectional curvatures are close to the ones of $g$ by the following theorem.

\begin{theorem}[Theorem 2.1 in \cite{Rong1996}]\th\label{T: Rong bounds sectional curvature}
Let the assumptions be as in \th\ref{T: Existence of N structures on}. Then the nearby metric $g^\varepsilon$ can  be chosen to satisfy in addition
\[
	\min \Sec(g) -\varepsilon \leq \Sec(g^\varepsilon) \leq\max \Sec(g)+\varepsilon.
\]
\end{theorem}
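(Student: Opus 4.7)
The plan is to refine the construction of $g^\varepsilon$ produced in \th\ref{T: Existence of N structures on} so as to track sectional curvature through each stage of the construction. Recall that $g^\varepsilon$ is obtained from $g$ in two steps: first, one regularizes $g$ to a smoothed metric $\tilde{g}$ with uniform bounds on every covariant derivative of its curvature tensor; second, one averages (the pullback to $F(M)$ of) $\tilde{g}$ over the local nilpotent actions coming from the sheaf of almost-Killing fields furnished by the $\Ns$-structure $\tilde{\eta}_\varepsilon$, and then pushes the averaged metric back down to $M$. The conclusion of \th\ref{T: Existence of N structures on} records only $C^{0}$-closeness between $g$ and $g^\varepsilon$; my task is to verify that, with a careful choice of the smoothing scale and averaging kernel, the sectional curvatures remain in $[\min\sec(g)-\varepsilon,\max\sec(g)+\varepsilon]$.

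For the smoothing step I would use an Abresch-type regularization (or a short-time Ricci-type flow) at scale $\sigma=\sigma(\delta)$ to obtain $\tilde{g}$ satisfying $(1-\delta)g\le\tilde{g}\le(1+\delta)g$, with $\min\sec(g)-\delta\le\sec(\tilde{g})\le\max\sec(g)+\delta$ and $|\nabla^{k}\Rm(\tilde{g})|\le C_{k}(m,D,\delta)$ for each $k\ge 1$. The curvature closeness is a standard consequence of convolution-type smoothing of $\Rm(g)$ in harmonic coordinates, once one notes that $|\Rm(g)|\le 1$ and $\sigma$ can be chosen well below every geometric scale controlling $\Rm(g)$. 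Taking $\delta<\varepsilon/2$ absorbs the first source of error. The real work lies in the averaging step: over a tubular neighborhood of each orbit of $\tilde{\eta}_\varepsilon$, the $\Ns$-structure furnishes on a finite normal cover an action of a simply-connected nilpotent Lie group $N$ whose orbits have diameter less than $\varepsilon$ in $g^\varepsilon$, and $g^\varepsilon$ arises by averaging $\tilde{g}$ over these local $N$-actions and gluing with a partition of unity subordinate to the $\Ns$-atlas. Because each local averaging uses values of $\tilde{g}$ on a set of diameter $\lesssim\varepsilon$, Jacobi-field comparison combined with the bounds on $|\nabla^{k}\Rm(\tilde{g})|$ yields
\[
\bigl|\sec(g^\varepsilon)(P)-\sec(\tilde{g})(P)\bigr|\le \Phi\bigl(\varepsilon\,\bigl|\,m,D,\{C_{k}\}\bigr)
\]
for every $2$-plane $P$, where $\Phi\to 0$ as $\varepsilon\to 0$ with the other parameters fixed; shrinking the volume threshold $v=v(m,D,\varepsilon)$ if necessary one ensures $\Phi<\varepsilon/2$, and the claim follows by combining the two estimates.

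The principal obstacle is controlling the averaging step, since averaging a metric over an action that is only \emph{almost} isometric can in principle introduce $O(1)$ curvature. Two ingredients make it work here. First, the failure of the local $N$-actions to be $\tilde{g}$-isometric is governed by the collapsing parameter and can be made arbitrarily small by shrinking $v$; this is the content of the almost-Killing-field estimates on the frame bundle in \cite{CheegerFukayaGromov1992}. Second, the uniform bound $|\nabla^{k}\Rm(\tilde{g})|\le C_{k}$ from the smoothing step licenses a Taylor expansion of $\Rm$ along the averaging orbits, so that deviations from the exact-isometry case contribute only terms of order $\varepsilon$. If any residual gap remained, a compactness-contradiction argument---passing to a limit of purported counterexamples and invoking that an exactly $N$-invariant limiting metric has sectional curvatures enclosed by those of the unaveraged target---closes it.
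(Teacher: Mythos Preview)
The paper does not prove this statement at all: it is quoted verbatim from \cite{Rong1996} as a preliminary result and used as a black box, so there is no ``paper's own proof'' to compare against. Your proposal is therefore not competing with anything in the present manuscript.

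That said, what you have written is a faithful high-level outline of the argument actually given in \cite{Rong1996} (which in turn refines the construction in \cite{CheegerFukayaGromov1992}): one first regularizes $g$ to obtain uniform $C^k$-bounds on the curvature tensor at the cost of an arbitrarily small perturbation of sectional curvature, and then one controls the curvature change under the averaging step by exploiting both the smallness of the orbits and the higher-order curvature bounds from the regularization. Your identification of the ``principal obstacle''---that naive averaging over an almost-isometric action can destroy curvature bounds, and that the $|\nabla^k\Rm|$ control is precisely what tames this---is the heart of Rong's argument. The only caveat is that several of your sentences (e.g.\ ``Jacobi-field comparison combined with the bounds on $|\nabla^k\Rm(\tilde g)|$ yields\ldots'') compress genuinely technical estimates into a single line; turning this sketch into a complete proof would require reproducing a nontrivial portion of \cite{Rong1996} and the appendices of \cite{CheegerFukayaGromov1992}.
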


In particular the metric $g^\varepsilon$ in \th\ref{T: Existence of N structures on}  induces an inner metric $d_{g^\varepsilon}^\ast$ on the  quotient spaces $M/\mathscr{O}_{\tilde{\eta}_\varepsilon}$ of the $\Ns$-structures  making them into Alexandrov spaces of curvature at least $\min \Sec(g)-\varepsilon$.

\begin{ex}\th\label	{Ex: orbit type stratification torus actions}
Given an effective smooth $T^k$-action by isometries on a Riemannian manifold $(M,g)$, let $\Gamma\subset T^k$ be an isotropy group of some point $p\in M$. \emph{The orbit type stratum $\Sigma_{\Gamma}^\ast\subset M/T^k$} correspond to all the orbits in $M/T^k$ that have isotropy $\Gamma$. Observe that the closure of $\Sigma_{\Gamma}^\ast$ corresponds to the set of all orbits whose isotropy group contains $\Gamma$ (See \th\ref{Ex: Orbit type stratification}).
\end{ex}

Combining \th\ref{Ex: orbit type stratification torus actions}, Theorems \ref{T: Perelmans stability}, \ref{T: N-structure over manifold with finite fundamental group are given by T-structure}, \ref{T: Existence of N structures on}, and \ref{T: Rong bounds sectional curvature} we obtain the following result: 

\begin{theorem}[Theorem 1.3 in \cite{PetruninRongTuschmann1999}]\th\label{T: Bounded collapse in 1-connnected manifolds}
Assume that $(M_n, g_n)$ is a sequence of simply-connected compact Riemannian $m$-manifolds with sectional curvature bounds \linebreak$\lambda \leq \Sec(g_n) \leq\Lambda$ and diameters $\diam(g_n) \leq D$ which collapses with bounded curvature. Then, given any $\varepsilon>0$, for $n$ sufficiently large the following holds:
\begin{enumerate}[(a)]
\item There exists a smooth global effective $T^k$ action on $M_n$ with empty fixed-point set, all of whose orbits have diameter less than $\varepsilon$ with respect to $g_n$.
\item There exists on $M_n$ a $T^k$ invariant metric $g^\varepsilon_n$ which satisfies
\[
e^{-\varepsilon}g_n < g^\varepsilon_n < e^\varepsilon g_n,\quad \lambda- \varepsilon \leq \Sec(g^\varepsilon_n) \leq \Lambda +\varepsilon.
\]
\item Moreover assume that $(M_n,g_n)$ converges to an Alexandrov space $X$ of dimension $(m-k)$ in the Gromov-Hausdorff topology. Then for  the orbit space $M_n/T^k$ equipped with the metric induced by $g^\varepsilon_n$, we have that the Gromov-Hausdorff distance between $X$ and $M_n/T^k$ is less than $\varepsilon$.
\item The orbit space $M_n/T^k$ is homeomorphic to $X$, and moreover, this homeomorphism preserves the stratification of $M_n/T^k$ by orbit type.
\end{enumerate}
\end{theorem}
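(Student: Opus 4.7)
The strategy is to chain together the preceding cited results: apply the $\Ns$-structure existence theorem (\th\ref{T: Existence of N structures on}) together with Rong's curvature control (\th\ref{T: Rong bounds sectional curvature}), upgrade the $\Ns$-structure to a global torus action using simple-connectivity (\th\ref{T: N-structure over manifold with finite fundamental group are given by T-structure}), and then invoke Perelman's stability (\th\ref{T: Perelmans stability}) together with \th\ref{R: Perelman stability preserves stratification} to compare the orbit space $M_n/T^k$ with the limit space $X$. After rescaling we may assume $|\mathrm{Sec}(g_n)|\leq 1$; the rescaled metrics still satisfy $\diam\leq D$ and collapse with bounded curvature. Standard collapse-theoretic volume comparison forces $\vol(M_n, g_n) \to 0$, so for $n$ sufficiently large $\vol(M_n, g_n) < v(m, D, \varepsilon)$, and applying \th\ref{T: Existence of N structures on} together with \th\ref{T: Rong bounds sectional curvature} furnishes a pure $\Ns$-structure $\widetilde{\eta}_\varepsilon$ of positive rank on $M_n$ and a smooth $\Ns$-invariant metric $g_n^\varepsilon$ satisfying
\[
e^{-\varepsilon} g_n < g_n^\varepsilon < e^\varepsilon g_n, \qquad \lambda - \varepsilon \leq \mathrm{Sec}(g_n^\varepsilon) \leq \Lambda + \varepsilon,
\]
whose fibers have $g_n^\varepsilon$-diameter less than $\varepsilon$.

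Since $M_n$ is simply-connected, \th\ref{T: N-structure over manifold with finite fundamental group are given by T-structure} upgrades $\widetilde{\eta}_\varepsilon$ to a smooth $T^k$-action on $M_n$; after quotienting by its (finite) ineffective kernel we may assume it is effective without altering the orbits. Positive rank means every orbit has positive dimension, hence the fixed-point set is empty, and the fibers of $\widetilde{\eta}_\varepsilon$ descend under $\pi\colon F(M_n) \to M_n$ to the $T^k$-orbits, which therefore have $g_n^\varepsilon$-diameter less than $\varepsilon$, giving (a) and (b). For (c), the quotient map $(M_n, g_n^\varepsilon) \to M_n/T^k$ with the induced metric is an $\varepsilon$-Hausdorff approximation by the fiber-diameter bound, so $d_{\mathrm{GH}}((M_n, g_n^\varepsilon), M_n/T^k) < \varepsilon$; combining with the bi-Lipschitz bound between $g_n$ and $g_n^\varepsilon$ and the hypothesis $(M_n, g_n) \to X$ yields $d_{\mathrm{GH}}(M_n/T^k, X) < \varepsilon$ for $n$ large. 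For (d), both $M_n/T^k$ (equipped with the $g_n^\varepsilon$-quotient metric) and $X$ are $(m-k)$-dimensional Alexandrov spaces with a uniform lower curvature bound, so Perelman's stability (\th\ref{T: Perelmans stability}) supplies a homeomorphism, and \th\ref{R: Perelman stability preserves stratification} ensures that it respects the stratification by extremal subsets; on the quotient of an isometric torus action this coincides with the orbit-type stratification.

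The main subtleties I expect are verifying that the extremal-subset stratification of $M_n/T^k$ matches the orbit-type stratification—a standard fact for quotients by isometric torus actions, since orbit-type strata are locally totally geodesic submanifolds whose closures in the quotient are extremal—and confirming that the limit $X$ genuinely has dimension $m-k$, which is needed so that Perelman's stability applies. The bookkeeping of how $\varepsilon$ interacts with the bi-Lipschitz constants, the fibre-diameter bound, and the two successive Gromov--Hausdorff approximations to produce a uniform comparison between $M_n/T^k$ and $X$ is routine but must be done with care.
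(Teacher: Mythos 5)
Your proposal is correct and follows essentially the same route the paper indicates: the paper does not prove this statement itself but cites it as Theorem 1.3 of Petrunin--Rong--Tuschmann, noting only that it is obtained by combining Theorems~\ref{T: Perelmans stability}, \ref{T: N-structure over manifold with finite fundamental group are given by T-structure}, \ref{T: Existence of N structures on}, and \ref{T: Rong bounds sectional curvature}, which is exactly the chain you carry out (collapse gives small volume, hence an $\Ns$-structure with Rong's curvature control, upgraded to a torus action by simple-connectivity, with Perelman stability and the extremal-subset stratification handling (c) and (d)). The caveats you flag ($\varepsilon$-bookkeeping, dimension of $X$, orbit-type versus extremal stratification) are the right ones and are absorbed into the cited reference.
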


\section{Proofs of Main Theorems}\label{S: Proofs of main theorems}

In this section we give the proofs of some of our main results. We first present the proof of \th\ref{MT: Collapse of flat foliations}. We then present the proof of \th\ref{MT: Approximating flat foliations by N-structures}. With this we are able to give the proof of \th\ref{MT: flat foliations are given by group actions}.

\subsection{Proof of Theorem~\ref{MT: Collapse of flat foliations}}\label{SS: Proof of collapse}

We start by giving a proof of \th\ref{MT: Collapse of flat foliations}. The strategy of the proof is to explicitly define the sequence of collapsing metrics, and then verify that they collapse with bounded curvature and diameter.

\begin{proof}[Proof of \th\ref{MT: Collapse of flat foliations}]
Consider $(M,\fol,g)$ a regular $B$-foliation with leaves of positive dimension on a compact Riemannian manifold. 

We denote by $m$ the dimension of $M$, and we observe that since the leaves have positive dimension for any $q\in M$, we can write $g(q) = g(q)^\top+g(q)^\perp$ where $g(q)^\top$ is the metric $g(q)$ restricted to $T_q L_q\subset T_q M$, and $g(q)^\perp$ is the metric $g(q)$ restricted to $\nu_q(M,L_q)\subset T_q M$. For $\delta>0$ we consider the Riemannian metric $g_\delta$ defined at $q$ as:
\begin{equation}\label{EQ: collapsing sequence}
g_\delta(q) = \delta^2 g(q)^\top+g(q)^\perp.
\end{equation}
We claim that the family $g_\delta$ is the desired collapsing sequence.

We fix $p\in M$ and set $k = \dim(\fol)$. We consider the chart $(W,\phi)$ given by \th\ref{P: Plaque charts} around $p$. For $\overline{p}\in L_p\cap W$ observe that 
\[
\phi(\overline{p}) = (u_1(\overline{p}),\ldots,u_{k}(\overline{p}),v_1(\overline{p}),\ldots,v_{m-k}(\overline{p}))= (u_1(\overline{p}),\ldots,u_{k}(\overline{p}),0,\ldots,0).
\] 
Observe that by construction we have that $\phi^{-1}(U\times\{(0,\ldots,0)\}) = L_p\cap W$.

By assuming that $W$ is small enough so that the normal bundle $\nu(M,L_p)\to L_p$ is trivial over $W\cap L_p$,  and thus we can consider a basis $E_1,\ldots,E_{m-k}\in \nu(M,L_p)|_{W\cap L_p}$ of smooth vector fields on  $L_p\cap W = \phi(U\times\{\bar{0}\})$ normal to $L_p$, such that for $\overline{p}\in L_p\cap W$ we have $g(\overline{p})(E_i,E_j)= \delta_{i,j}$. We take $r>0$ small enough so that $\expo_\nu\colon \nu^{r}(M,L_p)\to M$ is a diffeomorphism onto the open tubular neighborhood $\mathrm{Tub}^{r}(L_p)$ and set $\overline{W} = W\cap \mathrm{Tub}^{r}(L_p)$. For $\overline{p}\in L_p\cap \overline{W}$ we set $u(\overline{p}) = (u_1(\overline{p}),\ldots,u_{k}(\overline{p}),0,\ldots,0)$ as a local chart, and observe that for $q\in \overline{W}$ there exists a unique $\overline{p}\in \overline{W}\cap L_p$ and unique $t_{1},\ldots,t_{m-k}\in\R$ such that 
\[
q = \expo_{\overline{p}}\left(\sum_{j=1}^{m-k} t_j E_j(\overline{p}) \right).
\] 
With these observations we define the \emph{Fermi-coordinates} $\psi=(x,y)\colon \overline{W}\to \overline{U}\times \overline{V}\subset \R^{k}\times \R^{m-k}$ with $x(q)=(\psi_1(q),\ldots,\psi_k(q))$ and $y(q)=(\psi_{k+1}(q),\ldots,\psi_m(q))$ given as follows:
\begin{linenomath}
\begin{align*}
x_a\left(\expo_{\overline{p}}\left(\sum_{j=1}^{m-k} t_j E_j(\overline{p})\right)\right)=\psi_a\left(\expo_{\overline{p}}\left(\sum_{j=1}^{m-k} t_j E_j(\overline{p})\right)\right) &= u_a(\overline{p})\qquad \mbox{for }a=1,\ldots,k,\\
y_i\left(\expo_{\overline{p}}\left(\sum_{j=1}^{m-k} t_j E_j(\overline{p})\right)\right)=\psi_i\left(\expo_{\overline{p}}\left(\sum_{j=1}^{m-k} t_j E_j(\overline{p})\right)\right) &=t_i\qquad \mbox{for }i=k+1,\ldots,m.
\end{align*}
\end{linenomath}

\noindent By \cite[Lemma~2.4]{Gray} we have that the restriction of $\frac{\partial}{\partial y_{k+1}}=\frac{\partial}{\partial \psi_{k+1}},\ldots, \frac{\partial}{\partial y_{m}}=\frac{\partial}{\partial \psi_m}$ to  $L_p\cap \overline{W}$ are orthonormal. Also we observe that $\frac{\partial}{\partial x_a}(q)=\frac{\partial}{\partial \psi_a}(q) = \frac{\partial}{\partial u_a}(q)$, for $a= 1,\ldots,k$ by construction.

For $(x,y) = q\in \overline{W}$ we have the unique $g$-orthogonal decomposition $\frac{\partial}{\partial y_i} = X_i+V_i$ where $X_i\in T_{q} L_{q}$ and $V_i\in \nu_{q}(M,L_{q})$. We define matrices $A(x,y)$, $B(x,y)$, $C(x,y)$, and $D(x,y)$ as follows:
\begin{linenomath}
\begin{align*}
A_{a,b}(x,y)&=g(x,y)\left(\frac{\partial}{\partial x_a},\frac{\partial}{\partial x_b}\right),  & &1\leq a,b\leq k,\\
B_{a,i}(x,y) &=g(x,y)\left(\frac{\partial}{\partial x_a},\frac{\partial}{\partial y_i}\right) = g(x,y)\left(\frac{\partial}{\partial x_a},X_i\right), & &1\leq a\leq k,\: k+1\leq i\leq m,\\
C_{i,j}(x,y)&= g(x,y)(X_i,X_j), & &k+1\leq i,j\leq m,\\
D_{i,j}(x,y)&= g(x,y)(V_i,V_j), & &k+1\leq i,j\leq m.
\end{align*}
\end{linenomath}
Then we have the following local description of $g$ in the $(x,y)$-coordinates:
\[
g(x,y) 
= \begin{pmatrix}
  A(x,y) & B(x,y) \\
  B(x,y) & C(x,y)+D(x,y)
\end{pmatrix}.
\]

Observe that $A(x,y)$, $B(x,y)$, $C(x,y)$  correspond to the components  of the metric $g|_{L_{(x,y)}}$, which is a  homogeneous flat metric on $L_{(x,y)}$ (see Section~\ref{S: Flat}). From this it follows that $g|_{L_{(x,y)}}$ does not depend on base point on $L_{(x,y)}$, and in particular it is independent of the $x$-coordinate. To see this, observe that for $q= (x,y)$ we can lift metric $g|_{L_q}$ to the Euclidean Riemannian metric on the universal cover $\R^{k_q}$ of $L_q$, where $k_q = \dim(L_q)$. Thus locally the lifted metric does not depend on the base point, and thus the same holds for $g|_{L_q}$.

The matrix $D(x,y)$ corresponds to the geometry  normal to $L_q$ at $q=(x,y)$ in $M$. But since $\fol$ is a closed regular Riemannian foliation, this geometry is independent of the point in $L_q$. In particular for $(x',y) = q'\in L_q$ close enough to $q$ in $L_q$ we have $D(x,y) = D(x',y')$. Thus the metric $g$ in the local coordinates $(x,y)$ is independent of $x$. In this way for $(x,y) = q\in \overline{W}$ we can write
\[
g(x,y) = \begin{pmatrix}
  A(y) & B(y) \\
  B(y) & C(y)+D(y)
\end{pmatrix}.
\]

In analogous fashion we have for the metric $g_\delta$ and the indices $a,b=1,\ldots,k$, $i,j=k+1,\ldots,m$ the following identities:
\begin{linenomath}
\begin{align*}
g_\delta(x,y)\left(  \frac{\partial}{\partial u_a},\frac{\partial}{\partial u_b}\right) &= \delta^2 g(x,y)\left(\frac{\partial}{\partial u_a},\frac{\partial}{\partial u_b}\right) =\delta^2 A_{a,b}(x,y),\\
g_\delta(x,y)\left( \frac{\partial}{\partial \bar{u}_a},\frac{\partial}{\partial x_i}\right) &=g_\delta(x,y)\left(\frac{\partial}{\partial u_a},X_i\right) =\delta^2 g(x,y)\left(\frac{\partial}{\partial u_a},X_i\right) = \delta^2 B_{a,i}(x,y),\\
g_\delta(x,y)(X_i,X_j) &= \delta^2g(x,y)(X_i,X_j) = \delta^2 C_{i,j}(x,y),\\
g_\delta(x,y)(V_i,V_j)&= g(x,y)(V_i,V_j) = D_{i,j}(x,y).
\end{align*}
\end{linenomath}
Thus we have locally around $p$ in the $(x,y)$-coordinates that 
\[
g_\delta(x,y)= \begin{pmatrix}
  \delta^2 A(y) & \delta^2 B(y) \\
  \delta^2 B(y) & \delta^2 C(y)+ D(y)
\end{pmatrix}.
\]
We now follow \cite[Proof of Theorem 2.1]{CheegerGromov1986}, and consider the change of coordinates $\bar{x}_a = \delta x_a$. Thus we have $\frac{\partial}{\partial \bar{x}_a} = \frac{1}{\delta}\frac{\partial}{\partial x_a}$, and we get the following local description of $g_\delta$ in the $(\bar{x},y)$-coordinates:
\[
g_\delta(\bar{x},y) = \begin{pmatrix}
  A(y) & \delta B(y) \\
  \delta B(y) & \delta^2 C(y)+ D(y)
\end{pmatrix}.
\]
Since this matrix does not depend on $\bar{x}$, we can define $g_\delta$ on $\R^{k}\times \overline{V}$. As $\delta\to 0$, the metric $g_\delta$ converges smoothly on $\R^{k}\times \overline{V}$ to the metric
\[
\begin{pmatrix}
A(y) & 0 \\
0 & D(y)
\end{pmatrix}.
\]
Moreover, $A(y)$ induces a flat metric on $\R^{k}$. Observe that for $r>0$ small enough so that $B_r(\bar{0})\subset \overline{U}$, and for $\delta$ small enough we have that $g_\delta(x,y) = g_\delta(y)$ restricted to $B_{r}(\bar{0})\times\overline{V}$ is isometric to $g(\bar{x},y) = g(y)$ restricted to $B_{\delta r}(\bar{0})\times\overline{V}$. Now by hypothesis
\[
\lambda\leq \Sec\Big(g(x,y)|_{B_{r}(\bar{0})\times\overline{V}}\Big)\leq \Lambda,
\]
and thus the same holds for $g(\bar{x},y)|_{B_{\delta r}(\bar{0})\times\overline{V}}$. By the compactness of $M$ we get that 
\[
\lambda\leq \Sec(g_\delta)\leq \Lambda.
\]
Moreover, observe that $\diam(M,g_\delta)\leq \diam(M,g)$, since we are ``shrinking some tangent directions'' of $M$.

Given a leaf $L_p$, we observe that since we are not changing the norm of the tangent directions in $TM$ perpendicular to $L_p$, we have that for $r>0$ the tubular neighborhood $B_{r}(L_p,g)$ of $L_p$ with respect to $g$ is the tubular neighborhood $B_{r}(L_p,g_\delta)$ of $L_p$ with respect to $g_\delta$. Moreover we have that  $B_{r}(p,g_\delta)\subset B_{r}(L_p,g_\delta)= B_{r}(L_p,g)$, where $B_{r}(p,g_\delta)$ is the ball of radius $\delta>0$ centered at $p$ with respect to $g_\delta$. But by construction we have that $\mathrm{Vol}(B_{r}(L_p,g), g_\delta)$ goes to $0$ as $\delta\to 0$. Thus we conclude that 
\[
\lim_{\delta\to 0}\mathrm{Vol}(B_{r}(p,g_\delta)) =0.
\]
We assume now that there exists $r_0>0$ such that $\Inj(g_\delta)\geq r_0$. By \cite[Proposition 14]{Croke1980}, taking $r_0/2>r>0$ we have that there exists a constant $C(m)>0$ depending only on $m$ the dimension of $M$ such that for all $\delta$
\[
\mathrm{Vol}(B_{r}(p,g_\delta))\geq C(n) r^m>0.
\] 
But this is a contradiction to the fact that $\mathrm{Vol}(B_{r}(p,g_\delta))$ decreases to $0$. Thus  we have that 
\[
\lim_{\delta\to 0} \Inj(g_\delta)=0.
\]

Thus the metrics $\{g_\delta\}_{\delta>0}$ give a collapsing sequence of $M$ with bounded curvature and diameter.
\end{proof}

\subsection{Proof of Theorem~\ref{MT: Approximating flat foliations by N-structures}}\label{SS: Proof of approximation by N-structures}

In this subsection we present the proof of \th\ref{MT: Approximating flat foliations by N-structures}. The idea of the proof consists in modifying the collapsing sequence from \th\ref{MT: Collapse of flat foliations} to obtain a new sequence of metrics converging to the original foliated metric $g$. Then we prove that the $g$-perpendicular part of action fields to $\fol$ vanishes.

\subsubsection*{Setup for the proof of Theorem~\ref{MT: Approximating flat foliations by N-structures}}\hfill\\
We give the necessary setup for the proof. We take  $(M,\fol,g)$ to be a regular $B$-foliation on a compact $m$-dimensional manifold with finite fundamental group and  $m>2$.  

We consider the collapsing sequence $\{g_\delta\}_{\delta>0}$ given in \eqref{EQ: collapsing sequence} for the proof of \th\ref{MT: Collapse of flat foliations}. Next we fix $p\in M$ and we consider the $(\bar{x},y)$-coordinates  on a small neighborhood $\overline{W}$ around $p$ as given  in the proof of \th\ref{MT: Collapse of flat foliations}. We recall that in these coordinates we have that 
\[
g_\delta(\bar{x},y) = \begin{pmatrix}
A(y) & \delta B(y)\\
\delta B(y) & \delta^2 C(y)+D(y)
\end{pmatrix}.
\]
Abusing notation, taking $\delta=1/n$ we now consider the sequence of Riemannian metrics $g_n = g_{1/n}$ on $M$, and  fix $\varepsilon>0$. Then, by \th\ref{T: Existence of N structures on} exists an $\Ns$-structure  $\widetilde{\eta}_\varepsilon$ on $M$ with orbits of positive dimension, and there exists $N(\varepsilon)\in \N$  such that for $n\geq N(\varepsilon)$ we have an $\widetilde{\eta}_\varepsilon$-invariant Riemannian metric  $g_n^\varepsilon$ on $M$  for which it holds:
\begin{equation}\label{EQ: Equivalence between g and gvarepsilon}
e^{-\varepsilon} g_n < g^\varepsilon_{n} < e^{\varepsilon} g_n\quad\mbox{and}\quad 	 \lambda-\varepsilon \leq \Sec(g^\varepsilon_n) \leq \Lambda+\varepsilon.
\end{equation}

Given $X(p)\in T_pM$, we consider the projections $X^\perp(p)\in \nu_p(M,L_p)$ and $X^\top(p)\in T_pL_p$. Observe that for a vector field $X\in \X(M)$ the vector fields $X^\perp$ and $X^\top$ are smooth vector fields on $M$, since $\fol$ is a smooth foliation.

We define a Riemannian metric $\bar{g}^\varepsilon_{N(\varepsilon)}$ on $M$ as follows:
\begin{linenomath}
\begin{equation}\label{EQ: modifed N_e metric}
\begin{split}
\bar{g}^\varepsilon_{N(\varepsilon)}(X,Y) =& g^\varepsilon_{N(\varepsilon)}(X^\perp,Y^\perp)+N(\varepsilon) g^\varepsilon_{N(\varepsilon)}(X^\perp,Y^\top)\\
&+N(\varepsilon) g^\varepsilon_{N(\varepsilon)}(X^\top,Y^\perp)+ N(\varepsilon)^2 g^\varepsilon_{N(\varepsilon)}(X^\top,Y^\top)
\end{split}
\end{equation}
\end{linenomath}
for $X,Y\in \X(M)$. 

We prove the first part of \th\ref{MT: Approximating flat foliations by N-structures}.

\begin{lemma}\th\label{L: modified metric converge GH to g}
For the metrics defined in \eqref{EQ: modifed N_e metric} the spaces $(M,\bar{g}^\varepsilon_{N(\varepsilon)})$ converge to $(M,g)$ in the Gromov-Hausdorff sense as $\varepsilon\to 0$.
\end{lemma}

\begin{proof}
For a given $Z\in \X(M)$ we consider $Z_\varepsilon = Z^\perp+N(\varepsilon)Z^\top$. Then we have that by definition $g_{N(\varepsilon)} (Z_\varepsilon,Z_\varepsilon)= g(Z,Z)$ and $g^\varepsilon_{N(\varepsilon)}(Z_\varepsilon,Z_\varepsilon) = \bar{g}^\varepsilon_{N(\varepsilon)}(Z,Z)$. Thus from \eqref{EQ: Equivalence between g and gvarepsilon} we have
\[
e^{-\varepsilon}g(Z,Z) = e^{-\varepsilon}g_{N(\varepsilon)} (Z_\varepsilon,Z_\varepsilon)<g^\varepsilon_{N(\varepsilon)} (Z_\varepsilon,Z_\varepsilon)
 <e^{\varepsilon}g_{N(\varepsilon)} (Z_\varepsilon,Z_\varepsilon) = e^{\varepsilon}g(Z,Z).
\]
Since $g^\varepsilon_{N(\varepsilon)} (Z_\varepsilon,Z_\varepsilon) = \bar{g}^\varepsilon_{N(\varepsilon)} (Z,Z)$, this implies that for $Z\in TM$ fixed we have 
\begin{linenomath}
\begin{align}
e^{-\varepsilon}g(Z,Z)\leq \bar{g}^\varepsilon_{N(\varepsilon)}(Z,Z) \leq e^{\varepsilon}g(Z,Z).\label{EQ: Converges of bar g}
\end{align}
\end{linenomath}

Fix $p,q\in M$ and consider $c\colon [0,1]\to M$ the minimizing geodesic from $p$ to $q$ with respect to $g$. Then we have that
\[
e^{-\varepsilon/2} d_{\bar{g}^\varepsilon_{N(\varepsilon)}}(p,q) \leq e^{-\varepsilon/2}\ell(\bar{g}^\varepsilon_{N(\varepsilon)})(c)\leq \ell(g)(c) = d_g(p,q).
\]
Now consider $c_\varepsilon\colon [0,1]\to M$ the minimizing geodesic from  $p$ to $q$ with respect to $\bar{g}^\varepsilon_{N(\varepsilon)}$. Then we have that
\[
d_{\bar{g}^\varepsilon_{N(\varepsilon)}}(p,q) = \ell(\bar{g}^\varepsilon_{N(\varepsilon)})(c_\varepsilon) \geq e^{-\varepsilon/2}\ell(g)(c_\varepsilon)\geq e^{-\varepsilon/2}d_g(p,q).
\]
That is we have 
\[
e^{-\varepsilon/2}d_g(p,q)\leq d_{\bar{g}^\varepsilon_{N(\varepsilon)}}(p,q)\leq e^{\varepsilon/2}d_g(p,q).
\]
This implies that both $\Id_M\colon (M,d_g)\to (M,d_{\bar{g}^\varepsilon_{N(\varepsilon)}})$ and $\Id^{-1}_M\colon (M,d_{\bar{g}^\varepsilon_{N(\varepsilon)}})\to (M,d_g)$ are $e^{\varepsilon/2}$-Lipschitz. 

Recall that the \emph{dilatation} of a Lipschitz function $f\colon (X,d_X)\to (Y,d_Y)$ is $\mathrm{dil}(f)=\sup\{d_Y(f(x_1),f(x_2))/d_X(x_1,x_2)\mid x_1,x_2\in X\}$. Observe that given two different metrics $d_1$, $d_2$ on the same space $X$ such that $\Id_X\colon (X,d_1)\to (X,d_2)$ and $\Id_X^{-1}\colon (X,d_2)\to(X,d_1)$ are Lipschitz, we have that $\mathrm{dil}(\Id_X)$ is not necessarily equal to $\mathrm{dil}(\Id_X^{-1})$ since this quantities depend on the metrics. Then we have that
\[
\mathrm{dil}(\Id_M)\leq e^{\varepsilon/2},\quad\mbox{and}\quad \mathrm{dil}(\Id_M^{-1})\leq e^{\varepsilon/2},
\]
 Thus we conclude that for the Lipschitz distance we have
\[
d_L((M,d_g),(M,d_{\bar{g}^\varepsilon_{N(\varepsilon)}}))\leq \varepsilon/2.
\]
This implies that $(M,d_{\bar{g}^\varepsilon_{N(\varepsilon)}})$ converges Gromov-Hausdorff to $(M,d_g)$ as $\varepsilon$ goes to $0$ (see \cite[Example 7.4.3]{BuragoBuragoIvanov}).
\end{proof}

We recall from \th\ref{T: N-structure over manifold with finite fundamental group are given by T-structure} that our $\Ns$-structures $\widetilde{\eta}_\varepsilon$ are given by torus fibrations with fiber $T^k$ since $M$ has finite fundamental group.

By assuming that the coordinate chart $\overline{W}$ is small enough, we may assume  that for the $\Or(m)$-principal bundle $\pi\colon F(M)\to M$ we have a trivalization $\psi\colon \pi^{-1}(\overline{W})\to \overline{W}\times \Or(m)$. With this trivialization, given $q\in \overline{W}$ we take $\tilde{q} = \psi^{-1}(q,\mathrm{Id})$ to be a lift of $q$, i.e. $\pi(\tilde{q}) = q$. We consider the fiber $\widetilde{\eta}_\varepsilon^{-1}(\widetilde{\eta}_\varepsilon(\tilde{p}))$ of the $\Ns$-structure that contains $\tilde{q}$, and consider a fixed vector $x\in T_e T^k$. We now consider the action field $\widetilde{X}_\varepsilon(\tilde{q})$ at $\tilde{q}$  of $x$ on the fiber $\widetilde{\eta}^{-1}_\varepsilon(\tilde{\eta}_\varepsilon(\tilde{q}))$. We set now $X_\varepsilon(q) = \pi_\ast(\widetilde{X}_\varepsilon(\tilde{q}))$; this vector field is well defined, since the fibration $\widetilde{\eta}_\varepsilon$ is $\Or(m)$-invariant. 

We now proof the second part of \th\ref{MT: Approximating flat foliations by N-structures}.

\begin{lemma}
The vector field $X_\varepsilon(q)$ converges to a direction in $T_q(L_q)$.
\end{lemma}

\begin{proof}
We denote by $X_\varepsilon^\top$ the component tangent to $\fol$ and by $X_\varepsilon^\perp$  the component perpendicular to $L_p$ with respect to $g$. Thus we have that 
\[
e^{-\varepsilon}(\|X_\varepsilon^\perp\|^2_{g_{N(\varepsilon)}})\leq e^{-\varepsilon}(\|X_\varepsilon\|^2_{g_{N(\varepsilon)}}) < \|X_\varepsilon\|^2_{g^\varepsilon_{N(\varepsilon)}}.
\]
Now we observe that $\|X_\varepsilon^\perp\|^2_g = \|X_\varepsilon^\perp\|^2_{g_{N(\varepsilon)}}$. Since the orbits of $\widetilde{\eta}_\varepsilon$-structure have diameter less than $\varepsilon$ and $x$ is fixed, the norm $\|X_\varepsilon\|^2_{g^\varepsilon_{N(\varepsilon)}}$ goes to $0$ as we make $\varepsilon$ tend to $0$. From this together with the facts that $\|X_\varepsilon^\perp\|^2_g\geq 0$ and $\|X_\varepsilon^\top\|^2_{g_{N(\varepsilon)}}\geq 0$, we conclude that $\|X_\varepsilon^\perp\|^2_{g}\to 0$ as $\varepsilon\to 0$.

In the case when $\|X_\varepsilon^\top\|^2_{g}\to 0$, we conclude that $X_\varepsilon$ converges to the origin $\bar{O}\in T_p L_p$. Thus  in this case the tangent space of the $\widetilde{\eta}_\varepsilon$-orbits converge to a subspace of the tangent space to the orbits.
\end{proof}

We finish this section by proving that we have also uniform convergence of the Riemannian metrics $\bar{g}^\varepsilon_{N(\varepsilon)}$ to $g$.

\begin{lemma}\th\label{L: uniform convergence of bar g to g }
The Riemannian metrics $\bar{g}^\varepsilon_{N(\varepsilon)}$ defined in \eqref{EQ: modifed N_e metric} converge uniformly to $g$. 
\end{lemma}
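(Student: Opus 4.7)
The plan is to leverage the two-sided quadratic bound
\[
e^{-\varepsilon} g(Z,Z) \leq \bar{g}^\varepsilon_{N(\varepsilon)}(Z,Z) \leq e^{\varepsilon} g(Z,Z)
\]
already obtained in the preceding proof, and promote it to uniform convergence of the full (symmetric bilinear) tensors via polarization and compactness of $M$.

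First I would observe that the above bound gives, for any $p \in M$ and any $Z \in T_pM$ with $\|Z\|_g = 1$,
\[
\bigl| \bar{g}^\varepsilon_{N(\varepsilon)}(Z,Z) - g(Z,Z) \bigr| \leq \max\bigl( e^{\varepsilon} - 1,\, 1 - e^{-\varepsilon} \bigr) = e^{\varepsilon} - 1,
\]
uniformly in $p$ and $Z$. By homogeneity of order two, this upgrades to
\[
\bigl| \bar{g}^\varepsilon_{N(\varepsilon)}(Z,Z) - g(Z,Z) \bigr| \leq (e^{\varepsilon} - 1)\, \|Z\|_g^2
\]
for every $Z \in TM$.

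Next I would use the polarization identity, applied to the difference of the two symmetric bilinear forms. For $X, Y \in T_pM$, setting $\Delta_\varepsilon(W) := \bar{g}^\varepsilon_{N(\varepsilon)}(W,W) - g(W,W)$, one has
\[
4 \bigl[ \bar{g}^\varepsilon_{N(\varepsilon)}(X,Y) - g(X,Y) \bigr] = \Delta_\varepsilon(X+Y) - \Delta_\varepsilon(X-Y),
\]
so the quadratic bound yields
\[
\bigl| \bar{g}^\varepsilon_{N(\varepsilon)}(X,Y) - g(X,Y) \bigr| \leq \tfrac{1}{4}(e^{\varepsilon} - 1) \bigl( \|X+Y\|_g^2 + \|X-Y\|_g^2 \bigr) = \tfrac{1}{2}(e^{\varepsilon} - 1)\bigl( \|X\|_g^2 + \|Y\|_g^2 \bigr).
\]
Restricting to the unit sphere bundle of $(TM, g)$, which is compact since $M$ is compact, the right-hand side is bounded by $e^{\varepsilon} - 1$ uniformly in $p \in M$ and in unit vectors $X, Y \in T_pM$. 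Letting $\varepsilon \to 0$, this gives $\bar{g}^\varepsilon_{N(\varepsilon)} \to g$ in the $C^0$-norm on symmetric $(0,2)$-tensors, i.e.\ uniform convergence.

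Given that the pointwise quadratic sandwich was essentially already produced in the preceding proof, there is no real obstacle here; the lemma is a formal consequence of that sandwich, polarization, and compactness. The only small care needed is to apply the bound to both $X+Y$ and $X-Y$ (rather than to $X$ and $Y$ separately), so that the cross terms in polarization cancel correctly and one obtains a bound on the bilinear difference purely in terms of $g$-norms.
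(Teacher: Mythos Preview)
Your proof is correct and follows essentially the same approach as the paper: both arguments start from the quadratic sandwich $e^{-\varepsilon}g(Z,Z)\leq \bar{g}^\varepsilon_{N(\varepsilon)}(Z,Z)\leq e^{\varepsilon}g(Z,Z)$ and pass to the bilinear form via polarization, then invoke compactness (you via the unit sphere bundle, the paper via local coordinate frames) to conclude uniform $C^0$ convergence. Your version is in fact a little cleaner, since you polarize the \emph{difference} $\Delta_\varepsilon$ directly and obtain the explicit bound $\tfrac{1}{2}(e^\varepsilon-1)(\|X\|_g^2+\|Y\|_g^2)$, whereas the paper bounds $\bar{g}^\varepsilon_{N(\varepsilon)}(X,Y)$ above and below separately and then lets both sides tend to $g(X,Y)$.
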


\begin{proof}
As pointed out in the proof of \th\ref{L: modified metric converge GH to g}, given $Z\in \X(M)$ we have by \eqref{EQ: Converges of bar g} that the norms $\|Z\|^2_{\bar{g}^\varepsilon_{N(\varepsilon)}}$ converge to $\|Z\|^2_g$ as $\varepsilon\to 0$.

For $X,Y\in TM$, we use the polarization identities 
\[
\bar{g}^\varepsilon_{N(\varepsilon)}(X,Y) = \frac{1}{4}\left(\|X+Y\|^2_{\bar{g}^\varepsilon_{N(\varepsilon)}}-\|X-Y\|^2_{\bar{g}^\varepsilon_{N(\varepsilon)}}\right),
\]
to obtain
\[
\frac{1}{4}\left(e^{-\varepsilon}\|X+Y\|^2_{g}-e^\varepsilon\|X-Y\|^2_{g}\right)<\bar{g}^\varepsilon_{N(\varepsilon)}(X,Y)<\frac{1}{4}\left(e^{\varepsilon}\|X+Y\|^2_{g}-e^{-\varepsilon}\|X-Y\|^2_{g}\right).
\]
Since 
\[\lim_{\varepsilon\to 0}\frac{1}{4}\left(e^{-\varepsilon}\|X+Y\|^2_{g}-e^\varepsilon\|X-Y\|^2_{g}\right) = \frac{1}{4}\left(\|X+Y\|^2_{g}-\|X-Y\|^2_{g}\right) = g(X,Y)
\]
and 
\[\lim_{\varepsilon\to 0}\frac{1}{4}\left(e^{\varepsilon}\|X+Y\|^2_{g}-e^{-\varepsilon}\|X-Y\|^2_{g}\right) = \frac{1}{4}\left(\|X+Y\|^2_{g}-\|X-Y\|^2_{g}\right) = g(X,Y),
\]
we conclude that $\bar{g}^\varepsilon_{N(\varepsilon)}(X,Y)\to g(X,Y)$ as $\varepsilon\to 0$  at the same rate, independently of the base point $p\in M$.

Consider now a local coordinate system $(x_1,\ldots,x_m)$. Then we have that the coefficients $(\bar{g}^\varepsilon_{N(\varepsilon)})_{ij}(p)$ converge to $g_{ij}(p)$ with the same rate given by the rate of convergence of $e^\varepsilon$, and $e^{-\varepsilon}$ to $1$ as $\varepsilon\to 0$, and thus it is independent of the choice of a base point $p$.  Thus $(M,\bar{g}^\varepsilon_{N(\varepsilon)})$ converges with respect to the $C^0$-topology to $(M,g)$.
\end{proof}

\subsection{Proof of Theorem~\ref{MT: flat foliations are given by group actions}}\label{SS: Proof that B-foliations on simply-connected are given by torus actions}

We finish the manuscript by presenting the proof of \th\ref{MT: flat foliations are given by group actions}. For manifolds of dimension bigger than $2$, the proof consists of showing first that the orbit spaces of the $T^k$-actions induced by the $\Ns$-structures converge in the Gromov-Hausdorff sense to the quotient space $M/\fol$. Then by equivariant Gromov-Hausdorff convergence theory for the $T^k$-actions there exists a limit action by isometries by a Lie group $G$. Then we prove that the tangent distribution to the $G$-principal orbits agrees with the tangent distribution of $\fol$. Lastly, we prove that the orbits of the $G$-action agree with the orbits of the foliation and that $G$ is a torus. The lower dimensional cases are treated separately using existing classification results.

\subsubsection*{Setup for the Proof of Theorem~\ref{MT: flat foliations are given by group actions} for dimensions $>2$}\hfill\\
We present the necessary set up to prove \th\ref{MT: flat foliations are given by group actions} for the case when the dimension of the manifold is $>2$. 

Let $(M,\fol,g)$ be a compact simply-connected Riemannian manifold of dimension $>2$ with a regular $B$-foliation. 

In this case the hypothesis of \th\ref{MT: Collapse of flat foliations} are satisfied, and thus there exists a collapsing sequence of Riemannian metrics $g_\delta$ with bounded curvature defined in \eqref{EQ: collapsing sequence}. Given $\varepsilon>0$, we consider the $\Ns$-structure $\widetilde{\eta}_\varepsilon$ given by \th\ref{T: Existence of N structures on}. By \th\ref{T: N-structure over manifold with finite fundamental group are given by T-structure} it follows that the $\Ns$-structure $\widetilde{\eta}_\varepsilon$  is given by an effective torus actions $\mu_\varepsilon\colon T^k\times M\to M$, and we  denote by $T^k_\varepsilon\subset \mathrm{Diff}(M)$ the image of $T^k$ under the map $T^k\ni\xi\mapsto \mu_\varepsilon(\xi,\cdot)\in \mathrm{Diff}(M)$. 

Moreover, by \th\ref{T: Existence of N structures on} for each $\varepsilon>0$ we have a Riemannian metric $(M,g^\varepsilon_{N(\varepsilon)})$ which is $\mu_\varepsilon$-invariant. In the proof of \th\ref{MT: Approximating flat foliations by N-structures} we modified these metrics $g^\varepsilon_{N(\varepsilon)}$ to define in \eqref{EQ: modifed N_e metric} a sequence of Riemannian metrics $\bar{g}^\varepsilon_{N(\varepsilon)}$, which by \th\ref{L: uniform convergence of bar g to g } they converge uniformly to the Riemannian metric $g$. We also highlight that by construction, the metrics $\bar{g}^\varepsilon_{N(\varepsilon)}$ are $\mu_\varepsilon$-invariant.

We have the following lemma:

\begin{lemma}\th\label{Cl: Convergence of modified orbit spaces converges to leaf space}
The sequence $(M/T^k_\varepsilon,d^\ast_{\bar{g}^\varepsilon_{N(\varepsilon)}})$ converges to $(M/\fol,d_g^\ast)$ in the Gromov-Haus\-dorff sense as $\varepsilon\to 0$.
\end{lemma}

\begin{proof}
Let $\lambda\in \R$ be  the lower sectional curvature bound of $(M,g)$. Given $\varepsilon>0$, we have by construction that $(M, g^\varepsilon_{N(\varepsilon)})$ has $\Sec(g^\varepsilon_{N(\varepsilon)})\geq  \lambda-\varepsilon$. Thus the space $(M/T^k_\varepsilon,d^\ast_{g^\varepsilon_{N(\varepsilon)}})$ is an Alexandrov space with $\curv\geq \lambda-\varepsilon$. Therefore, given $\varepsilon_0>0$, for  $0<\varepsilon\leq\varepsilon_0$ we have that $(M/\fol,d_g^\ast)$ and $(M/T^k_\varepsilon, d^\ast_{g^\varepsilon_{N(\varepsilon)}})$ are compact Alexandrov spaces with $\curv\geq \lambda-\varepsilon_0$. Moreover, they both have the same dimension, and by construction $(M/T^k_\varepsilon, d^\ast_{g^\varepsilon_{N(\varepsilon)}})$ converges to $(M/\fol,d^\ast_g)$ in the Gromov-Hausdorff sense.  

We fix $p^\ast\in M/\fol$, and consider $q^\ast\in M/\fol$ close enough to $p^\ast$ in $M/\fol$, such that there is a unique geodesic $\gamma^\ast$ in $M/\fol$ joining $p^\ast$ to $q^\ast$. By \cite[Lemma 2.2]{GrovePetersen1991} we can find geodesics $\gamma^\ast_\varepsilon$ in $(M/T^k_\varepsilon,d^\ast_{g^\varepsilon_{N(\varepsilon)}})$ starting at $p^\ast$ which converge uniformly to $\gamma^\ast$. We denote by $q_\varepsilon^\ast$ the other endpoint of the geodesic $\gamma^\ast_\varepsilon$. Observe that by construction the sequence $q_\varepsilon^\ast$ converges to $q^\ast$ as $\varepsilon\to 0$. 

We now consider lifts $q_\varepsilon$ and $\bar{q}_\varepsilon$ in $M$ of $q^\ast_\varepsilon$ for the projection map $\pi_\varepsilon\colon M\to M/T^k_\varepsilon$ such that $d_{g^\varepsilon_{N(\varepsilon)}}(p,q_\varepsilon)= d^\ast_{g^\varepsilon_{N(\varepsilon)}}(p^\ast,q^\ast_\varepsilon)$ and $d_{\bar{g}^\varepsilon_{N(\varepsilon)}}(p,\bar{q}_\varepsilon)= d^\ast_{\bar{g}^\varepsilon_{N(\varepsilon)}}(p^\ast,q^\ast_\varepsilon)$. Thus, there exists $V_\varepsilon\in \nu^{g^\varepsilon_{N(\varepsilon)}}_p(M,T^k_\varepsilon)$ and $\bar{V}_\varepsilon\in \nu^{\bar{g}^\varepsilon_{N(\varepsilon)}}_p(M,T^k_\varepsilon)$ such that $\exp_p^{g^\varepsilon_{N(\varepsilon)}}(V_\varepsilon) =q_\varepsilon$ and $\exp_p^{\bar{g}^\varepsilon_{N(\varepsilon)}}(\bar{V}_\varepsilon) =\bar{q}_\varepsilon$. By construction there exists $\xi_\varepsilon\in T^k_\varepsilon$ such that $\xi_\varepsilon q_\varepsilon = \bar{q}_\varepsilon$. We point out that $\xi_\varepsilon$ can be factored as $\xi_\varepsilon=\bar{\xi}_\varepsilon\cdot\tilde{\xi}_\varepsilon$, where $\bar{\xi}_\varepsilon\in (T^k_\varepsilon)_p$ and $\tilde{\xi}_\varepsilon\not\in (T^k_\varepsilon)_p$. In particular  since $T^k$ is abelian, we have that there exists an action field $X^\ast_\varepsilon\in T_p T^k_\varepsilon(p)$ (determined by $\tilde{\xi}_\varepsilon$) such that  $V_\varepsilon = D_p\bar{\xi}_\varepsilon(\bar{V}_\varepsilon)+X_\varepsilon^\ast$. Since $T^k$ is compact, and the element $X^\ast_\varepsilon$ is determined by the exponential map of $T^k$, we conclude that $X^\ast_\varepsilon$  converges in $T_pM$, up to a subsequence, to a vector $X$. Moreover, by Theorem~\ref{MT: Approximating flat foliations by N-structures}  the tangent spaces of the $T^k_\varepsilon$-orbits converge to the tangent space of the leaves of $\fol$, and thus we have that $X$ is tangent to the leaf $L_p$. 

We also have that  $d_{g^\varepsilon_{N(\varepsilon)}}(p,q_\varepsilon) = d^\ast_{g^\varepsilon_{N(\varepsilon)}}(p^\ast,q^\ast_\varepsilon)$, which converges to $d^\ast_g(p^\ast,q^\ast)$ as $\varepsilon$ goes to $0$. This implies that the sequence $\{q_\varepsilon\}\subset M$ is eventually contained in a tubular neighborhood of $L_p$ with respect to $g$. In particular the sequence $\{q_\varepsilon\}$ is eventually bounded with respect to $g$. This implies that there exists $q_0\in M$ such that, up to a subsequence, $q_\varepsilon$ converges to $q_0$ with respect to $g$.

We consider $V_0\in T_p M$ to be the vector such that $\exp_p^g(V_0) =q_0$. We observe that since $V_\varepsilon = D_p\bar{\xi}_\varepsilon(\bar{V}_\varepsilon)+X^\ast_\varepsilon$, and both $V_\varepsilon$ and $X^\ast_\varepsilon$ converge, up to a subsequence, then $D_p\xi_\varepsilon(\bar{V}_\varepsilon)$ also converges, up to a subsequence, to some element $\bar{V}\in T_p M$. Moreover we have that $V = \bar{V}+X$ by construction. 

Since $\exp_p^{g^\varepsilon_{N(\varepsilon)}}(tV_\varepsilon)$ projects to the geodesic $\gamma_\varepsilon^\ast$, we conclude that $q_\varepsilon^\ast$ converges to $q_0^\ast$ and thus we have $q_0^\ast = q^\ast$. Given that for any $t\in[0,1]$ we have that $\exp_p^{g^\varepsilon_{N(\varepsilon)}}(tV_\varepsilon)$ converges to $\exp_p^g(tV)$ with respect to $g$, the same argument as before shows that $(\exp_p^g(tV))^\ast$ converges to $\gamma^\ast(t)$. This implies that the geodesic  $\exp_p^g(tV)$ is a lift of $\gamma^\ast$.  From this it follows that $d_g(p,q_0) = d_g^\ast(p^\ast,q^\ast)$, and that $\|V\|^2_g = d^\ast_g(p^\ast,q^\ast)$.

Since  $V$ realizes the distance between $L_p$ and $L_{q_0}$, then $V$ is $g$-orthogonal to $L_p$. But we also have that $V=\bar{V}+X$ with $X$ tangent to the leaf $L_p$, possibly equal to $\bar{0}$. This implies that $X=\bar{0}$, and thus $\bar{V}=V$. 

Moreover, by construction we have that $\|\bar{V}_\varepsilon\|^2_{\bar{g}^\varepsilon_{N(\varepsilon)}}=\|D_p\xi_\varepsilon(\bar{V}_\varepsilon)\|^2_{\bar{g}^\varepsilon_{N(\varepsilon)}}$ converges to $\|\bar{V}\|^2_g=\|V\|^2_g$. But since $\|\bar{V}_\varepsilon\|^2_{\bar{g}^\varepsilon_{N(\varepsilon)}}$ $= d^\ast_{\bar{g}^\varepsilon_{N(\varepsilon)}}(p^\ast,q^\ast_\varepsilon)$, we conclude that $d^\ast_{\bar{g}^\varepsilon_{N(\varepsilon)}}(p^\ast,q^\ast_\varepsilon)$ converges to $d_g^\ast(p^\ast,q^\ast)$.

Since $q^\ast$ is arbitrary, we conclude that for sufficiently small balls around $p^\ast$, the same holds. I.e. the function $d^\ast_{\bar{g}^\varepsilon_{N(\varepsilon)}}(p^\ast,-)$ approximates the function $d_g^\ast(p^\ast,-)$. By the triangle inequality, and the fact that the points $q_\varepsilon^\ast$ converge to $q$ with respect to $d^\ast_{\bar{g}^\varepsilon_{N(\varepsilon)}}$ and $d^\ast_g$ we have that the metrics $d^\ast_{\bar{g}^\varepsilon_{N(\varepsilon)}}$ converge uniformly to $d^\ast_g$ over the sufficiently small balls around $p^\ast$ in $M/\fol$. Since $M/\fol$ is compact, we conclude that the metrics $d^\ast_{\bar{g}^\varepsilon_{N(\varepsilon)}}$ converge uniformly to $d^\ast_g$. Thus, we conclude that $(M/T^k_\varepsilon,d^\ast_{\bar{g}^\varepsilon_{N(\varepsilon)}})$ converges to $(M/\fol,d^\ast_g)$ in the Gromov-Hausdorff sense.
\end{proof}

We now use results from Section~\ref{S: equivariant convergence} to proof the following lemma.

\begin{lemma}\th\label{L: existence of limit group action for torus actions}
There exists a Lie group $G$ acting by isometries on $(M,g)$ such that $(M/G,d^\ast_g)$ is isometric to $(M/\fol,d^\ast_g)$.
\end{lemma}

\begin{proof}
Since by \th\ref{L: modified metric converge GH to g}  $(M,d_{\bar{g}^\varepsilon_{N(\varepsilon)}})$ converges in the Gromov-Hausdorff sense to \linebreak$(M,d_g)$, then we have by \th\ref{R: GH convergence compact spaces implies pGH} and \th\ref{T: pGH-convergence implies eqGH-convergence} that there exists a closed  group $G$ acting effectively on $(M,d_g)$ by distance preserving isometries. By the work of Myers and Steenrod \cite{MyersSteenrod1939} it follows that $G$ is a closed subgroup of $\mathrm{Isom}(M,g)$, the Lie group of isometries of $g$. In particular $G$ is a Lie group.

Moreover by \th\ref{T: eqpGH convergence implies pGH convergence} we have that $(M/T^k_\varepsilon,d^\ast_{\bar{g}^\varepsilon_{N(\varepsilon)}})$ converges to $(M/G,d_g^\ast)$ in the Gromov-Hausdorff sense. By \th\ref{Cl: Convergence of modified orbit spaces converges to leaf space}, we have by the uniqueness of the limit that $(M/G,d_g^\ast)$ is isometric to $(M/\fol,d^\ast_g)$.
\end{proof}

For the group action from \th\ref{L: existence of limit group action for torus actions} we obtain the following conclusion.

\begin{lemma}\th\label{Cl: Tangent spaces of G principal orbits agree with tangent spaces of leaves}
Over the set of $G$-principal orbits, we have $T_pG(p)= T_pL_p$. 
\end{lemma}

\begin{proof}
Since $(M/\fol,d^\ast_g)$ is an Alexandrov space with $\curv \geq \lambda$, then $(M/G,d^\ast_g)$ is an Alexandrov space with the same lower curvature bound. Thus given $\varepsilon_0>0$, for $\varepsilon_0\geq \varepsilon>0$ we have that $(M/T^k_{g^\varepsilon_{N(\varepsilon)}})$ and $(M/G,d^\ast_g)$ are Alexandrov spaces with $\curv \geq \lambda-\varepsilon_0$. 

We consider $G(p)$ a principal orbit. Then there exists a sufficiently small ball around $G(p)$ in  $(M/T^k_{g^\varepsilon_{N(\varepsilon)}})$ consisting of only $G$-principal orbits. Let $G(q)$ be an orbit sufficiently close to $G(p)$, and let $\gamma^\ast_G$ be the minimizing geodesic in the principal stratum of $(M/G,d^\ast_g)$ joining $G(p)$ to $G(q)$. Then by \cite[Lemma 2.2]{GrovePetersen1991} there exists minimizing geodesics $\tilde{\gamma}^\ast_\varepsilon$ in the principal stratums of $(M/T^k_{\varepsilon},d^\ast_{g^\varepsilon_{N(\varepsilon)}})$ converging uniformly to $\gamma^\ast_G$.

Take $\delta>0$ small enough so that the $\delta$-ball around $p$ in $(M,g)$ consists of only principal orbits, and consider $B_\delta^\perp(p)\subset (M,g)$ the set of all points $x$ in $M$ with $d_{g}(p,x)<\delta$ such that there exists a unique minimizing geodesic $\gamma\colon [0,1]\to (M,g)$  with $\gamma(0) = p$, $\gamma(1) = x$, and $\gamma'(0)$ perpendicular to $T_pG(p)$ with respect to $g$ (for example we can take $\delta$ smaller than the injectivity radius of $g$ at $p$). Observe that by the Slice Theorem for group actions and how the metric $d^\ast_g$ is defined on $M/G$,  the open ball $B_\delta(G(p))\subset (M/G,d^\ast_g)$ is isometric to $B_\delta^\perp(p)$ under the quotient map $\pi_G\colon (M,d_g)\to (M/G,d_g^\ast)$. Under this identification, the lift $\gamma_G$ in $B_\delta^\perp$ of $\gamma^\ast_G$ is completely determined by the lift $p$ of $G(p)$. 

We also point out that since $(M/T^k_\varepsilon,d^\ast_{g^\varepsilon_{N(\varepsilon)}})$ converges in the Gromov-Hausdorff sense to $(M/G,d^\ast_g)$, by \th\ref{R: Perelman stability preserves stratification} we may assume that $p$ is contained in a $T^k_\varepsilon$-principal orbit. Thus via the quotient map $\pi_{\varepsilon}\colon (M,d_{g^\varepsilon_{N(\varepsilon)}})\to (M/T^k_\varepsilon,d^\ast_{g^\varepsilon_{N(\varepsilon)}})$, the open ball $B^{\varepsilon}_\delta(p^\ast)$ is isometric to $B_\delta^{\varepsilon\, \perp}(p)\subset (M,d_{g^\varepsilon_{N(\varepsilon)}})$, the collection of all points in $B^\varepsilon_\delta(p)\subset M$ such that there exist a minimizing geodesic $\alpha_\varepsilon\colon [0,1]\to (M,g)$ starting at $p$,  and $\alpha_\varepsilon'(0)$ perpendicular to $T_pT^k_\varepsilon(p)$ with respect to $g^\varepsilon_{N(\varepsilon)}$. We also point out that the lift $\tilde{\gamma}\varepsilon$ to $B_{\delta,\varepsilon}^\perp$ of $\tilde{\gamma}^\ast_\varepsilon$ is completely determined by the lift $p$ of $p^\ast$.

Thus we have that for each geodesic $\gamma_G\colon [0,1]\to M$ contained in $B_\delta^\perp(p)\subset (M,d_{g})$ there exist geodesics $\tilde{\gamma}_\varepsilon$ contained in $B_\delta^{\varepsilon\, \perp}(p)\subset (M,d_{g^\varepsilon_{N(\varepsilon)}})$ converging uniformly to $\gamma_G$.  

As stated in the proof of \th\ref{Cl: Convergence of modified orbit spaces converges to leaf space}, the sequence of vectors $\gamma_\varepsilon'(0)$ converges to a vector $V$ that is perpendicular to $L_p$ with respect to $g$. But $V$ is the tangent vector of a lift of the geodesic $\gamma^\ast$ starting at $p$. Thus we have that $\gamma'(0)$ is a scalar multiple of $V$ and thus perpendicular to $L_p$. In other words, we have that $\nu^g_p(M,G(p))\subset \nu^g_p(M,L_p)$.
 
Since $M/G$ is homeomorphic to $M/\fol$, we conclude that 
\[
\dime(\nu^g_p(M,G(p))=\dime(M/G) = \dime(M/\fol) =\dime( \nu^g_p(M,L_p)).
\] 
From this  we conclude that $\nu^g_p(M,G(p))=\nu^g_p(M,L_p)$, and thus $T_pG(p)=T_pL_p$.
\end{proof} 

We now prove the following auxiliary lemma.

\begin{lemma}\th\label{L: singular Riemannian foliation determined on principal part}
Let $M$ be a smooth manifold, and let $\fol'$ and $\fol$ be smooth regular foliations of dimension $k$ on $M$. Assume that $\fol$ and $\fol'$ agree on $D\subset M$ a dense subset of $M$, i.e. for $p\in D$ we have that $L_p = L_p'$ where $L_p\in \fol$ and $L_p'\in \fol'$ are the respective leaves through $p$ of $\fol$ and $\fol'$. Then $\fol = \fol'$, that is the foliations agree everywhere.
\end{lemma}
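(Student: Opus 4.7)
The plan is to reduce the problem to showing that the tangent distributions of $\fol$ and $\fol'$ coincide pointwise on all of $M$, and then invoke the uniqueness of maximal connected integral manifolds of an involutive distribution. Let $\Delta,\Delta'\subset TM$ denote the tangent distributions of $\fol$ and $\fol'$ respectively, so that $\Delta(p) = T_p L_p$ and $\Delta'(p) = T_p L_p'$ for each $p \in M$. Because both foliations are smooth and regular of the same dimension $k$, these are smooth rank-$k$ subbundles of $TM$, and equivalently smooth sections of the Grassmann bundle $\mathrm{Gr}_k(TM)\to M$.

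First I would use the hypothesis: for every $p\in D$ we have $L_p = L_p'$, and in particular $T_p L_p = T_p L_p'$, so $\Delta(p) = \Delta'(p)$ on $D$. Since $\Delta$ and $\Delta'$ are continuous sections of the Hausdorff fiber bundle $\mathrm{Gr}_k(TM)\to M$ and agree on the dense subset $D\subset M$, a standard density argument forces $\Delta(p) = \Delta'(p)$ at every $p\in M$. Thus the two foliations induce the same smooth rank-$k$ distribution on $M$.

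Once pointwise equality of the tangent distributions is established, the Frobenius theorem guarantees that through each point $p\in M$ there is a unique maximal connected integral manifold of this common involutive distribution. Both $L_p\in \fol$ and $L_p'\in \fol'$ are maximal connected integral manifolds through $p$, hence $L_p = L_p'$. Since this holds for every $p\in M$, we conclude $\fol = \fol'$ as foliations of $M$.

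There is no real obstacle here; the only subtle point is ensuring that density together with continuity of smooth sections of the Grassmann bundle really does upgrade agreement on $D$ to agreement on all of $M$, which is immediate from the Hausdorff property of $\mathrm{Gr}_k(TM)$. After that, the rest is standard foliation theory via Frobenius uniqueness.
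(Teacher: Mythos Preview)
Your proposal is correct and follows essentially the same approach as the paper: both view the tangent distributions as continuous sections of the Grassmann bundle, use density plus the Hausdorff property to conclude $\Delta=\Delta'$ everywhere, and then appeal to uniqueness of integral submanifolds to identify the leaves. The only cosmetic difference is that the paper spells out the Hausdorff density argument as an explicit contradiction, whereas you summarize it.
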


\begin{proof}
We assume that $m = \dim(M)$ and  for $k=\dime(\fol)=\dime(\fol')$ we consider $\Gr(k,m)\to M$ to be the fiber bundle over $M$ whose fiber at $q\in M$ is the Grassmannian $\Gr(k,T_q M)$ of $k$-dimensional subspaces in $T_q M$. Observe that the total space $\Gr(k,m)$ is a Hausdorff space. 

We recall  that the foliations  $\fol$ and $\fol'$ induce smooth distributions $\Delta$ and $\Delta'$ respectively, by setting $\Delta(q) = T_q L_q$ and $\Delta'(q) = T_q L_q'$. We point out that we can consider these distributions as continuous functions $\Delta,\, \Delta'\colon M\to \Gr(k,m)$. By hypothesis these functions agree on a dense subset, and are continuous. 

We now show by contradiction that $\Delta$ and $\Delta'$ agree everywhere. Assume that they do not agree on some point $q\in M$, i.e. $\Delta(q)\neq \Delta'(q)$. Since $X$ is Hausdorff, there exists open disjoint subsets $U,\, V\subset X$ with $\Delta(q)\in U$ and $\Delta'(q)\in V$. Now we consider $W = \Delta^{-1}(U)\cap (\Delta')^{-1}(V)$ which is open and non-empty, since $q\in W$. We observe that there exists $p\in D\cap W$, by the fact that $D$ is dense. Thus we have that $\Delta(p) = \Delta'(p)$. But this contradicts the fact that $U$ and $V$ are disjoint. Thus $\Delta(q)=\Delta'(q)$ for all $q\in M$. 

This in turn implies that for all $q\in M$ we have $T_q L_q = T_q L_q'$. But observe that by construction we have that $L_q$ is an integral submanifold of $\Delta$, and $L_q'$ is an integral submanifold of $\Delta'$. By the uniqueness of such integral submanifolds it follows that $L_q = L_q'$.
\end{proof}

\begin{lemma}
For the group action of $G$ on $(M,g)$ given by \th\ref{L: existence of limit group action for torus actions}, we have that $L_x=G(x)$ and $G$ is a torus. 
\end{lemma}

\begin{proof}
We begin by pointing out that since $\fol$ is a regular Riemannian foliation, we have that $M/\fol$ is an orbifold. Given that $M/G$ is isometric to $M/\fol$ by \th\ref{L: existence of limit group action for torus actions}, we conclude that $M/G$ is an orbifold. Now by the Slice Theorem (see \cite[Theorem 3.57]{AlexandrinoBettiol}), a sufficiently small ball of $G(x)\in M/G$ in  $(M/G,d^\ast_G)$ is homeomorphic to $\R^m/G_p$. From this and the fact that $M/G$ is an orbifold, we conclude that $G_p$ is a finite subgroup. This implies that the Riemannian foliation on $M$ induced by the partition into the $G$-orbits is a regular Riemannian foliation. Moreover by \th\ref{Cl: Tangent spaces of G principal orbits agree with tangent spaces of leaves} the distributions $T_pL_p$ and $T_pG(p)$ agree over the open dense set of principal $G$-orbits. Then by \th\ref{L: singular Riemannian foliation determined on principal part} we have  $G(x) =L_x$ for an arbitrary $x\in M$.

To finish the proof we consider a fixed $x\in M$. Since we have that $G(x)$ is homeomorphic to $G/G_x$, and $G_x$ is finite, then $G$ is a covering space of $G(x)$. But $G(x)$ is also homeomorphic to $L_x$, which is a torus. This implies that $G$ is a finite cover of a torus and thus $G$ it is aspherical with fundamental group isomorphic to $\pi_1(L_x)$. Thus $G$ is a torus.
\end{proof}

From the discussion above, we have proven the following lemma.

\begin{lemma}\th\label{L: case of dimension > 2}
Let $(M,\fol,g)$ be a regular $B$-foliation on a compact-simply connected Riemannian manifold of dimension $>2$. Then $\fol$ is given by a torus action by isometries on $(M,g)$.
\end{lemma}

\begin{proof}[Proof of \th\ref{MT: flat foliations are given by group actions}]
We consider $(M,\fol,g)$ to be a compact simply-connected $m$-di\-men\-sio\-nal manifold  equipped with a closed non-trivial regular Riemannian foliation such that for $L\in\fol$ we have that $(L,g|_{L})$ is flat; i.e. $\fol$ is a regular $B$-foliation. 

By \th\ref{L: case of dimension > 2} we have proven \th\ref{MT: flat foliations are given by group actions} in the case when $m>2$. We now prove the missing lower dimensional cases.

In the case when $m = 1$ there are no such foliations: Assume that $(M,\fol,g)$ is a regular $B$-foliation on a compact simply-connected $1$-dimensional manifold. Since the leaves have positive dimension, we conclude that the foliation consists of only one leaf, i.e. $\fol = \{M\}$, which is a contradiction to $\fol$ being non-trivial. 

In the case of $m = 2$, we have several cases based on the codimension of the foliation. In the case when the foliation has codimension $0$, this implies that the foliation consists of only one leaf, i.e. $\fol = \{M\}$, which is a contradiction to the non-triviality of $\fol$. The case of codimension $2$ is excluded, because this implies that a leaf has dimension $0$, contradicting the assumption that the leaves of $\fol$ have positive dimension. In the case when $\fol$ has codimension $1$, from  \cite[Theorem D]{GalazGarciaRadeschi2015} it follows that given an $A$-foliation of codimension $1$ on a $2$-dimensional simply-connected manifold $M$, then $M$ is diffeomorphic to $\Sp^2$ and $\fol$ is given by a circle action. This is a foliation with singular leaves and contradicts our assumption of regularity.
\end{proof}

\bibliographystyle{siam}
\bibliography{References}

\end{document}